\theoremstyle{plain}
\newtheorem{lemma}{Lemma}[section]
\newtheorem*{theorem*}{Theorem}
\newtheorem*{lemma*}{Lemma}
\newtheorem*{proposition*}{Proposition}
\newtheorem*{conjecture*}{Conjecture}
\newtheorem*{corollary*}{Corollary}
\newtheorem*{problem*}{Problem}
\newtheorem{theorem}[lemma]{Theorem}
\newtheorem{corollary}[lemma]{Corollary}
\newtheorem{proposition}[lemma]{Proposition}
\newtheorem{problem}[lemma]{Problem}
\theoremstyle{definition}
\newtheorem{definition}[lemma]{Definition}
\newtheorem{example}[lemma]{Example}
\newtheorem{remark}[lemma]{Remark}
\newcommand{\F}[1]{\mathscr{#1}}
\newcommand{\fto}[1]{\stackrel{#1}{\to}}
\newcommand{\Z}{\mathbb{Z}}
\newcommand{\FF}{\mathbb{F}}
\renewcommand{\F}{\mathbb{F}}
\newcommand{\CC}{\mathbb{C}}
\newcommand{\Q}{\mathbb{Q}}
\newcommand{\R}{\mathbb{R}}
\newcommand{\OO}{\mathcal{O}}
\newcommand{\te}{\otimes}
\newcommand{\cF}{\mathcal F}
\newcommand{\cM}{\mathcal M}
\newcommand{\cP}{\mathcal P}
\newcommand{\cT}{\mathcal{T}}
\newcommand{\cE}{\mathcal{E}}
\newcommand{\cV}{\mathcal{V}}
\newcommand{\cW}{\mathcal{W}}
\newcommand{\ZZ}{\mathbb{Z}}
\renewcommand{\P}{\mathbb{P}}
\newcommand{\PP}{\mathbb{P}}
\DeclareMathOperator{\ch}{ch}
\DeclareMathOperator{\Hom}{Hom}
\DeclareMathOperator{\Pic}{Pic}
\DeclareMathOperator{\Sym}{Sym}
\DeclareMathOperator{\Ext}{Ext}
\DeclareMathOperator{\sHom}{\mathcal{H} \textit{om}}
\DeclareMathOperator{\SL}{SL}
\begin{document}

\date{\today}
\author[I. Coskun]{Izzet Coskun}
\address{Department of Mathematics, Statistics and CS \\University of Illinois at Chicago, Chicago, IL 60607}
\email{coskun@math.uic.edu}
\author[J. Huizenga]{Jack Huizenga}
\address{Department of Mathematics, The Pennsylvania State University, University Park, PA 16802}
\email{huizenga@psu.edu}

\subjclass[2010]{Primary: 14J60, 14J26. Secondary: 14D20, 14F05}
\keywords{Moduli spaces of sheaves, globally generated vector bundles, Hirzebruch surfaces}
\thanks{During the preparation of this article the first author was partially supported by the  NSF grant DMS-1500031 and  the second author was partially supported  by the NSA\ Young Investigator Grant H98230-16-1-0306}

\title[Brill-Noether Theorems and Gaeta resolutions]{Brill-Noether Theorems  and globally generated vector bundles on Hirzebruch surfaces}

\begin{abstract}
In this paper, we show that the cohomology of a general stable bundle on a Hirzebruch surface is determined by the Euler characteristic provided that the first Chern class satisfies necessary intersection conditions. More generally, we compute the Betti numbers of a general stable bundle.  We also show that a general stable bundle on a Hirzebruch surface has a special resolution generalizing the Gaeta resolution on the projective plane. As a consequence of these results, we classify Chern characters such that the general stable bundle is globally generated. 
\end{abstract}

\maketitle

\setcounter{tocdepth}{1}
\tableofcontents

\section{Introduction}
The Brill-Noether theorem of G\"{o}ttsche and Hirschowitz \cite{GottscheHirschowitz} shows that a general stable bundle on $\PP^2$ has at most one nonzero cohomology group. On a Hirzerbruch surface $\F_e$ the situation is not so simple---the section of negative self-intersection can cause every bundle with given numerical invariants to have interesting cohomology.  In this paper,  we determine necessary and sufficient conditions on numerical invariants  which ensure that the general stable bundle on $\F_e$ has at most one nonzero cohomology group.  Essentially equivalently, we also compute the Betti numbers of a general stable bundle.

We then show that the general stable sheaf on $\F_e$ has a special resolution by direct sums of line bundles.  These resolutions generalize the Gaeta resolution of a general sheaf on $\P^2$, and can be viewed as giving unirational parameterizations of moduli spaces of sheaves \cite{Eisenbudsyzygies, Gaeta}.  Thus, these resolutions are a convenient tool for describing a general sheaf.  As a consequence of the Brill-Noether theorem and the Gaeta-type resolution, we completely determine when a general stable bundle on a Hirzebruch surface is globally generated.  The case of $\F_1$ implies an analogous result for $\P^2$ which sharpens a theorem of Bertram, Goller and Johnson \cite{BGJ}.  These theorems play crucial roles in the construction of theta and Brill-Noether divisors and in the study of Le Potier's Strange Duality Conjecture.  We also anticipate they will be useful in the study of ample vector bundles on these surfaces.

Let  $\F_e$ denote the Hirzebruch surface $\PP(\OO_{\PP^1} \oplus \OO_{\PP^1}(e))$, where $e$ is a nonnegative integer. The Picard group  $\Pic(\F_e) = \ZZ E \oplus \ZZ F$ is generated by the class of a fiber $F$ of the projection $\pi: \F_e \rightarrow \PP^1$ and the class of the section $E$ with self-intersection $E^2 = -e$. Let $H$ denote an ample class on $\F_e$ and let ${\bf v}$ be the Chern character of a positive rank $\mu_H$-semistable sheaf. We call ${\bf v}$ \emph{stable} for brevity. Let $M({\bf v}):=M_H^{\mu\textit{-ss}}({\bf v})$ be the moduli space of $\mu_H$-semistable sheaves with Chern character ${\bf v}$.   By a theorem of Walter \cite{Walter}, the moduli space $M({\bf v})$ is irreducible, and therefore it makes sense to talk about a general sheaf of character ${\bf v}$.  If $r({\bf v})\geq 2$, then Walter additionally shows the general sheaf in $M({\bf v})$ is a vector bundle.  

Our first theorem generalizes the G\"{o}ttsche-Hirschowitz Theorem.  

\begin{theorem}\label{thm-BNintro}
Let ${\bf v}$ be a stable Chern character on $\F_e$ with rank $r({\bf v})\geq 2$ and \emph{total slope} $$\nu({\bf v}) := \frac{c_1({\bf v})}{r({\bf v})}$$ satisfying $\nu({\bf v})\cdot F \geq -1$.  

If $\nu({\bf v}) \cdot E \geq -1$, then the general sheaf $\cV \in M({\bf v})$ has at most one nonzero cohomology group and, furthermore, $H^2(\F_e,\cV)=0$.  Conversely, if $\chi({\bf v}) \geq 0$, then the general sheaf in $M({\bf v})$ has at most one nonzero cohomology group if and only if $\nu({\bf v}) \cdot E \geq -1.$
\end{theorem}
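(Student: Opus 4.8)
The statement has a positive direction ("if $\nu\cdot E \ge -1$, then general sheaf has $\le 1$ nonzero cohomology and $H^2=0$") and a converse (assuming $\chi \ge 0$, the vanishing forces $\nu\cdot E \ge -1$). I'd attack the converse first since it's more elementary. The plan is: if $\nu({\bf v})\cdot E < -1$, show that \emph{every} sheaf $\cV \in M({\bf v})$ has $h^0(\cV) > 0$ and $h^1(\cV) > 0$ simultaneously, which combined with $\chi \ge 0$ (so $h^2$ alone cannot absorb everything) contradicts having at most one nonzero group. Concretely, restrict $\cV$ to the $(-e)$-curve $E \cong \P^1$. A $\mu_H$-semistable sheaf has controlled restriction to $E$; the slope $\nu({\bf v})\cdot E$ governs the degree of $\cV|_E$ up to torsion, and $\nu\cdot E < -1$ pushes that degree low enough that $H^1(E,\cV|_E) \ne 0$ necessarily. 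Feeding this through the restriction sequence $0 \to \cV(-E) \to \cV \to \cV|_E \to 0$ and using the Brill-Noether-type statement already available for $\cV(-E)$ (whose slope against $E$ is even more negative, handled by descending induction on $\nu\cdot E$) forces $h^1(\cV) > 0$. Since $\chi({\bf v}) \ge 0$, if $h^1 > 0$ then either $h^0 > 0$ (two nonzero groups, done) or $h^2 > 0$; in the latter case I'd rule it out or again produce a second nonzero group using Serre duality and the bound $\nu\cdot F \ge -1$. The main technical content here is the statement "$\nu\cdot E<-1 \Rightarrow h^1(\cV)>0$ for all $\cV$", which I expect follows from a clean restriction-to-$E$ argument.

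For the positive direction I'd reduce it to the corresponding statement without the "$H^2 = 0$" refinement — i.e., to the first (non-converse) claim of the theorem, which I'm told I may assume is proved earlier, or which I'd prove in parallel via the Gaeta-type resolution. The strategy: by Serre duality $H^2(\F_e,\cV)^\vee = \Hom(\cV, K_{\F_e})$, and $K_{\F_e} = -2E - (e+2)F$. For this to be nonzero there must be a nonzero map $\cV \to K_{\F_e}$; $\mu_H$-semistability of $\cV$ bounds $\mu_H(\cV) = \nu\cdot H$ from above by $\mu_H(K_{\F_e}) = K_{\F_e}\cdot H$ along any such map's image, and a computation with $\nu\cdot E \ge -1$, $\nu\cdot F \ge -1$ shows this is violated — or, more carefully, that any such map would destabilize $\cV$. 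So $H^2 = 0$ is essentially automatic from semistability plus the two slope inequalities; the real work is showing $h^0 \cdot h^1 = 0$ for general $\cV$.

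For that work — the heart of the theorem — I'd use the resolution of a general sheaf by a complex of sums of line bundles (the Gaeta-type resolution promised in the introduction). Write the general $\cV$ via $0 \to \bigoplus \cL_i \to \bigoplus \cM_j \to \cV \to 0$ (or the appropriate two- or three-term shape on $\F_e$) with the $\cL_i, \cM_j$ drawn from an explicit small list of line bundles, chosen so that each has cohomology concentrated in a single degree and these degrees are "aligned." Then the hypercohomology spectral sequence / long exact sequence computes $H^*(\cV)$ from $H^*$ of the line bundles, and the alignment of vanishing ranges — which is exactly what the hypotheses $\nu\cdot F \ge -1$ and $\nu\cdot E \ge -1$ encode — forces at most one $H^i(\cV)$ to survive, with $H^2 = 0$ falling out of the same bookkeeping. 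The main obstacle I anticipate is establishing (or correctly invoking) the Gaeta-type resolution with line bundles in precisely the right cohomological window: one must know not only that such a resolution exists for the general sheaf but that the specific line bundles appearing are controlled by $\nu$ closely enough that their cohomology aligns; handling the boundary cases $\nu\cdot E = -1$ or $\nu\cdot F = -1$, and the interplay with the $(-e)$-curve $E$ when $e$ is large, is where the argument will be most delicate. A semicontinuity reduction — it suffices to exhibit \emph{one} sheaf (not necessarily stable, then corrected) with the desired cohomology — will likely be needed to get the induction or the resolution off the ground.
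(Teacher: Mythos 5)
Your skeleton is recognizable — reduce to exhibiting one sheaf with the right cohomology (semicontinuity on an irreducible parameter space), prove the converse by restricting to the negative section $E$ — and your Serre-duality argument that $\mu_H$-semistability together with $\nu\cdot F\geq -1$ and $\nu\cdot E\geq -1$ forces $H^2=0$ is correct and is in fact cleaner than what the paper does for that vanishing. But there are two genuine gaps. First, for the positive direction you propose to exhibit the general sheaf via a Gaeta-type resolution and read off the cohomology from the line bundles appearing in it. This cannot cover the whole theorem: the paper's own Remark after Lemma \ref{lem-Lexists} shows that on $\P^1\times\P^1$ with $\nu({\bf v})=\frac12 E+\frac12 F$ and $0<\Delta({\bf v})<1/4$ \emph{no} sheaf of character ${\bf v}$ admits a Gaeta-type resolution, yet such ${\bf v}$ satisfies $\nu\cdot F\geq -1$ and $\nu\cdot E\geq -1$ and so falls under the theorem. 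The paper instead builds an $F$-prioritary direct sum $\cW$ of the four line bundles $L(-E-(e+1)F)$, $L(-E-eF)$, $L(-F)$, $L$ with prescribed rank and slope and with $\Delta(\cW)\leq 0$ (choosing $L$ nef when $\nu\cdot E\geq -1$, so $\cW$ has no higher cohomology), and then raises the discriminant to $\Delta({\bf v})$ by general elementary modifications, which preserve prioritariness and the property of having at most one nonzero cohomology group once $h^2=0$. Some such device for moving in the $\Delta$ direction is unavoidable; a two-term resolution by itself does not provide it.

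Second, in the converse the crux is the vanishing $h^2(\F_e,\cV(-E))=0$, which gives the surjection $H^1(\F_e,\cV)\to H^1(E,\cV|_E)$ and hence $h^1(\F_e,\cV)>0$ (after which you are done, since $\chi\geq 0$ and $h^1>0$ force $h^0+h^2=\chi+h^1>0$). Your proposed ``descending induction on $\nu\cdot E$'' is pointed the wrong way: $\nu({\bf v}(-E))\cdot E=\nu({\bf v})\cdot E+e$ is \emph{less} negative, not more, and the actual obstruction is that $\nu({\bf v}(-E))\cdot F=\nu({\bf v})\cdot F-1$ drops below $-1$ precisely when $-1\leq\nu({\bf v})\cdot F<0$, in which range the $h^2$-vanishing criterion no longer applies to $\cV(-E)$. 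The paper closes exactly this case by a Riemann--Roch computation (proof of Corollary \ref{cor-BNpositive}) showing that $\chi({\bf v})\geq 0$, $\Delta({\bf v})\geq 0$, and $-1<\nu({\bf v})\cdot F<0$ together already imply $\nu({\bf v})\cdot E\geq -1$, so the problematic range is vacuous. Without that computation (or a substitute), your induction does not close.
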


More precisely, we will give a simple formula to compute the Betti numbers $h^i(\F_e,\cV)$ of a general sheaf $\cV\in M({\bf v})$; see Theorem \ref{thm-betti}. The statements in Theorem \ref{thm-BNintro} contain the most challenging and interesting part of this computation.  By replacing ${\bf v}$ by the Serre dual character ${\bf v}^D$, we can always reduce to the case $\nu({\bf v}) \cdot F \geq -1$, so this assumption is not really restrictive. Under the assumptions of the theorem, the Euler characteristic completely determines the cohomology of the general sheaf if $\nu({\bf v}) \cdot E \geq -1$. If $\nu({\bf v}) \cdot E < -1$ and $\chi({\bf v}) \geq 0$, then the general sheaf has both nonzero $h^0$ and $h^1$. On the other hand, if $\nu({\bf v})\cdot E <-1$ and $\chi({\bf v}) < 0$,  then general sheaf has only $h^1$ provided that the discriminant of ${\bf v}$ is sufficiently large. We will quantify this precisely in Corollary \ref{cor-BN}.  

Theorem \ref{thm-BNintro} has many applications.  For instance, it shows that effective theta divisors can be constructed on moduli spaces $M({\bf v})$ if $\chi({\bf v}) =0$, $\nu({\bf v})\cdot F \geq -1$, and $\nu({\bf v}) \cdot E \geq -1$.  In this special case, Theorem \ref{thm-BNintro} was shown in \cite{CoskunHuizengaWBN} by a different approach.  The full version of Theorem \ref{thm-BNintro} for arbitrary Euler characteristic will also play a crucial role in classifying the stable Chern characters for which the general sheaf in $M({\bf v})$ is globally generated.  The next theorem contains the majority of the classification; see Theorems \ref{thm-gg1} and \ref{thm-gg0} for the complete classification.

\begin{theorem}\label{thm-GGintro}
Let $e\geq 1$, and let ${\bf v}$ be a stable Chern character on $\F_e$. Assume that $r({\bf v}) \geq 1$, $\chi({\bf v}) \geq r({\bf v})+2$, $\nu({\bf v}) \cdot F > 0$ and $\nu({\bf v}) \cdot E \geq 0$. Then the general sheaf in $M({\bf v})$ is globally generated.
\end{theorem}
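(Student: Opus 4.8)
The plan is to deduce global generation from the Gaeta-type resolution together with the Brill–Noether theorem. Since $e \geq 1$ and ${\bf v}$ is stable with $r({\bf v}) \geq 2$, $\nu({\bf v})\cdot F > 0$, and $\nu({\bf v})\cdot E \geq 0$, the numerical hypotheses of the Brill–Noether theorem (Theorem \ref{thm-BNintro}) are satisfied, and moreover $\chi({\bf v}) \geq r({\bf v}) + 2 > 0$. Hence the general sheaf $\cV \in M({\bf v})$ has $H^1(\F_e, \cV) = H^2(\F_e,\cV) = 0$ and $h^0(\F_e, \cV) = \chi({\bf v}) \geq r({\bf v}) + 2$. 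First I would record the corresponding vanishing after twisting by the natural ``negative'' line bundles appearing in the Gaeta resolution: concretely, I want to know that the general $\cV$ also satisfies the vanishing needed for the resolution from Section \ref{} (the analogue of the statement that a general sheaf on $\P^2$ with $\chi \geq 0$ and the slope conditions has a three-term resolution by direct sums of the line bundles $\OO(-1), \OO, \OO(1)$, or the appropriate twists thereof on $\F_e$). The hypothesis $\chi({\bf v}) \geq r({\bf v})+2$ is exactly what is needed to guarantee that, after removing a trivial summand's worth of sections, the relevant bundle is still globally generated — this is the reason the bound is $r({\bf v})+2$ rather than $r({\bf v})$ or $r({\bf v})+1$.

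The key steps, in order, are: (1) Apply Theorem \ref{thm-BNintro} and Theorem \ref{thm-betti} to conclude $\cV$ is cohomologically as simple as possible — only $h^0$ is nonzero, $h^0 = \chi({\bf v}) \geq r({\bf v})+2$. (2) Invoke the Gaeta-type resolution for the general $\cV$, writing
\begin{equation*}
0 \to \cP_{-1} \to \cP_0 \to \cV \to 0
\end{equation*}
(or a slightly longer complex), where $\cP_0$ is a direct sum of copies of $\OO_{\F_e}$ and of globally generated line bundles, and $\cP_{-1}$ is a direct sum of line bundles that are ``one step more negative'' but whose appearance is controlled. (3) Use the long exact sequence in cohomology: global generation of $\cV$ follows if $\cP_0$ is globally generated and the connecting map does not obstruct surjectivity on global sections, i.e. if $H^1$ of the appropriate twist of $\cP_{-1}$ vanishes — and this $H^1$-vanishing is again a consequence of the Brill–Noether input, since the twists of $\cP_{-1}$ that occur are exactly of the shape to which Theorem \ref{thm-BNintro} applies. (4) Handle the evaluation map $H^0(\F_e,\cV)\otimes \OO_{\F_e} \to \cV$ directly on the resolution: cokernel of the evaluation map for $\cV$ is a quotient of the cokernel of the evaluation map for $\cP_0$, which vanishes since $\cP_0$ is a sum of globally generated line bundles. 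The role of the strict inequality $\nu({\bf v})\cdot F > 0$ (rather than $\geq 0$) is to ensure that the fiberwise picture — restricting to a general fiber $F \cong \P^1$ — already gives global generation along fibers, and $\nu({\bf v})\cdot E \geq 0$ plays the symmetric role for the section $E$; together these rule out the degenerate line-bundle summands $\OO(aE+bF)$ with $a<0$ or $b<0$ that would destroy global generation of $\cP_0$.

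The main obstacle I anticipate is step (2)–(3): controlling precisely which line bundles appear in the Gaeta-type resolution of the general $\cV$ and verifying that all of them are globally generated (for $\cP_0$) and cohomologically tame (for $\cP_{-1}$). On $\P^2$ the Gaeta resolution uses only three consecutive twists, but on $\F_e$ the two-parameter Picard group means the resolution involves a small ``window'' of line bundles $\OO(aE+bF)$, and one must check that the hypotheses $\nu({\bf v})\cdot F > 0$, $\nu({\bf v})\cdot E \geq 0$, $\chi({\bf v}) \geq r({\bf v})+2$ force this window to lie inside the globally-generated (resp. $H^1$-acyclic) cone. A secondary subtlety is the case of small rank or small discriminant, where the resolution might degenerate; here one either checks these finitely many or low-complexity cases by hand or notes that the desired conclusion is vacuous or follows from the line-bundle case. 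Once the resolution is pinned down, the cohomological bookkeeping is routine given Theorem \ref{thm-BNintro}.
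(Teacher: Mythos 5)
Your overall strategy---resolve $\cV$ by a Gaeta-type resolution and deduce global generation from global generation of the middle term---does not work, and the failure is exactly at the point you flag as the ``main obstacle.'' The middle term of an $L$-Gaeta-type resolution is $L(-E-eF)^\beta\oplus L(-F)^\gamma\oplus L^\delta$ with $\gamma=-\chi(\cV(-L-F))$ and $\beta=-\chi(\cV(-L-E))$ (Lemma \ref{lem-exponents}), and the admissible $L$ must satisfy $\chi({\bf v}(-L))\geq 0$. In the genuinely hard case $\chi({\bf v}(-F))<0$ one may take $L=\OO_{\F_e}$, but then $\gamma=-\chi({\bf v}(-F))>0$, so the summand $\OO_{\F_e}(-F)$ occurs and the middle term is \emph{not} globally generated; your claim that the hypotheses ``rule out the degenerate line-bundle summands'' is false. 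Nor can one escape by choosing a more positive $L$: making all three of $L$, $L(-F)$, $L(-E-eF)$ globally generated forces $L=\OO_{\F_e}(aE+bF)$ with $a\geq 1$ and $b\geq ae+1$, and then $\chi({\bf v}(-L))\geq 0$ would require roughly $\chi({\bf v})\geq r({\bf v})+c_1({\bf v})\cdot L$, which is far stronger than $\chi({\bf v})\geq r({\bf v})+2$ when $c_1({\bf v})$ is large. So no choice of $L$ presents $\cV$ as a quotient of a globally generated sheaf in general.

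The paper's route around this is the essential missing idea: apply the Gaeta-type machinery not to ${\bf v}$ but to the Serre dual ${\bf m}^D$ of the Lazarsfeld--Mukai character ${\bf m}=\chi({\bf v})\ch(\OO_{\F_e})-{\bf v}$. One first shows (Lemma \ref{lem-mukai}) that ${\bf v}$ is globally generated iff there is a bundle $\cM$ of character ${\bf m}$ with no cohomology, $h^1(\F_e,\cM(-F))=0$, and $\cM^*$ globally generated; then Lemma \ref{lem-serreDualIneqs} verifies the inequalities (\ref{lb-ineqs}) for ${\bf m}^D$ with $L=\OO_{\F_e}$ (this is where $\chi({\bf v})\geq r({\bf v})$ and nefness of $c_1({\bf v})$ enter), so $\cM^D$ has a resolution by $\OO_{\F_e}(-E-(e+1)F)$, $\OO_{\F_e}(-E-eF)$, $\OO_{\F_e}(-F)$ with no $\OO_{\F_e}$ summand, and dualizing exhibits $\cM^*$ as a quotient of $\OO_{\F_e}(E+2F)^\beta\oplus\OO_{\F_e}(2E+(e+1)F)^\gamma$, which is globally generated off $E$; a separate balancedness argument on $E$ finishes the global generation of $\cM^*$. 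The hypothesis $\chi({\bf v})\geq r({\bf v})+2$ is used to ensure $r({\bf m})\geq 2$ so that the cokernel of a general map $\cM\to\OO_{\F_e}^{\chi({\bf v})}$ is locally free---not, as you suggest, to leave a globally generated bundle ``after removing a trivial summand's worth of sections.'' Your proposal also omits the easy complementary case $\chi({\bf v}(-F))\geq 0$, where global generation follows directly by restricting to fibers; that case is fine, but the case $\chi({\bf v}(-F))<0$ cannot be handled by a Gaeta resolution of $\cV$ itself.
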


As a consequence of the $\F_1$ case of Theorem \ref{thm-GGintro}, we complete an analogous classification for $\P^2$ started by Bertram-Goller-Johnson \cite{BGJ}.   We will use two techniques to prove our theorems. We will make use of the stack of $F$-prioritary sheaves, and  we will find special resolutions of the general sheaf in the spirit of the Gaeta resolution on $\P^2$.

A torsion-free sheaf $\cV$ on $\F_e$ is \emph{$F$-prioritary} if $\Ext^2(\cV, \cV(-F))=0$. The stack of $F$-prioritary sheaves $\cP_F({\bf v})$ with Chern character ${\bf v}$ is irreducible \cite[Proposition 2]{Walter}. Furthermore, $\mu_H$-semistable sheaves are $F$-prioritary, so $\cM_H^{\mu\textit{-ss}}({\bf v})\subset \cP_F({\bf v})$ is an open substack, which is dense if it is nonempty. Hence, assuming $\mu_H$-semistable sheaves of character ${\bf v}$ exist, to show that the general $\cV\in M({\bf v})$ satisfies some open property, it suffices to exhibit one $\cV\in \cP_F({\bf v})$ with that property. The advantage of working with $F$-prioritary sheaves is that they are much easier to construct than semistable sheaves. For example, one can construct $F$-prioritary sheaves as certain direct sums of line bundles.  We will prove Theorem \ref{thm-BNintro} by explicitly constructing an $F$-prioritary sheaf with at most one nonzero cohomology group for every character ${\bf v}$ satisfying the hypotheses of the Theorem.  

\begin{remark}
In fact, $\mu_H$-semistability will play essentially no role in this paper.  Instead, in the body of the article we state almost all of our theorems for moduli stacks $\cP_F({\bf v})$ where ${\bf v}$ is a character satisfying the Bogomolov inequality $\Delta({\bf v})\geq 0$ (which is automatically satisfied if there is a $\mu_H$-semistable sheaf of character ${\bf v}$).  When the space $M({\bf v})$ is nonempty, the analogous results for $M({\bf v})$ follow immediately. 
\end{remark}

Our second technique will be to exploit a convenient resolution of the general sheaf in $M({\bf v})$. We will be able to read many cohomological properties of the general sheaf from this resolution. Our main result is as follows.  See Theorem \ref{thm-Gaeta} for a stronger statement.

\begin{theorem}\label{thm-IntroGaeta}
Suppose $e\geq 2$, and let $\cV \in M({\bf v})$ be a general $\mu_H$-semistable sheaf on $\F_e$.  Then there exists a line bundle $L$ such that $\cV$ has a resolution of the form
$$0 \to L(-E-(e+1)F)^{\alpha} \to L(-E-eF)^{\beta} \oplus L(-F)^{\gamma} \oplus L^{\delta} \to \cV \to 0.$$
\end{theorem}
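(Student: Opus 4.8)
The strategy is to realize the Gaeta-type resolution by resolving a general sheaf as the cokernel of a map between direct sums of line bundles, and then to show the map has the required shape by a cohomology vanishing argument on the stack $\cP_F(\mathbf v)$. First I would fix a suitable ``twisting'' line bundle $L$ so that, after replacing $\cV$ by $\cV(-L)$, the sheaf is cohomologically normalized in a convenient range. The natural candidates for the generators of a resolution are the line bundles $\OO$, $\OO(-F)$, $\OO(-E-eF)$, $\OO(-E-(e+1)F)$; note that $E$ and $F$ together with these twists span enough of $\Pic(\F_e)$ that the ``exceptional-type'' collection $\{\OO(-E-(e+1)F),\OO(-E-eF),\OO(-F),\OO\}$ should have the property that a general sheaf is generated by its evaluation maps from these bundles. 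Concretely I would form the evaluation map
$$\bigoplus_j V_j \te L_j \longrightarrow \cV,$$
where $L_j$ ranges over $\OO,\OO(-F),\OO(-E-eF)$ (the three ``degree-zero'' pieces of the collection) and $V_j = \Hom(L_j,\cV)$, and I would want this to be surjective with kernel a direct sum of copies of $L(-E-(e+1)F)$.

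The key steps, in order: (1) Use the Brill--Noether computation (Theorem~\ref{thm-BNintro}, and more precisely the Betti number computation of Theorem~\ref{thm-betti}) applied to the relevant twists of $\cV$ to control $h^i(\F_e, \cV\te L_j^{\vee})$ for all the line bundles entering the putative resolution; this both pins down the exponents $\alpha,\beta,\gamma,\delta$ in terms of $\mathbf v$ and shows the ``wrong'' cohomology groups vanish for a general $\cV$. (2) Since it suffices to verify an open condition on $\cP_F(\mathbf v)$, construct an explicit $F$-prioritary model: take a generic direct sum $\bigoplus L_j^{\oplus a_j}$ of the three generating line bundles (with the $a_j$ forced by Chern character and Euler characteristic bookkeeping) and a generic map from $L(-E-(e+1)F)^{\oplus \alpha}$ into it, check the map is injective with torsion-free (indeed locally free) cokernel $\cV_0$, and verify $\cV_0$ is $F$-prioritary by computing $\Ext^2(\cV_0,\cV_0(-F))=0$ from the resolution. (3) Identify the Chern character of $\cV_0$ with $\mathbf v$ and conclude by irreducibility of $\cP_F(\mathbf v)$ that the general member has such a resolution; this proves the theorem, and the refined Theorem~\ref{thm-Gaeta} would additionally record the precise formulas for $\alpha,\beta,\gamma,\delta$ and when some of them vanish.

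The main obstacle will be step (2): ensuring that a generic map from $L(-E-(e+1)F)^{\oplus\alpha}$ into the generic direct sum of the other three line bundles is \emph{injective with torsion-free cokernel}, and that the resulting rank and Chern character land exactly on $\mathbf v$ rather than only in some subcone. Injectivity with torsion-free cokernel of a general map between direct sums of line bundles is governed by whether the relevant degeneracy loci have the expected (large) codimension, which in turn reduces to positivity/globally-generatedness properties of the line bundles $L(F)$ and $L(E+(e+1)F)$ and their quotients on $\F_e$; here the hypothesis $e\geq 2$ is presumably exactly what makes the numerology work, since on $\F_0$ and $\F_1$ the section $E$ behaves differently. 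A secondary subtlety is the boundary behavior where one of the exponents is forced to be zero or where the discriminant $\Delta(\mathbf v)$ is small: in those cases the generic direct-sum construction may fail to be semistable or may acquire extra cohomology, so one needs the $F$-prioritary framework (rather than $M(\mathbf v)$ directly) to push the argument through, exactly as flagged in the remark after Theorem~\ref{thm-BNintro}.
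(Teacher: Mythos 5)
Your overall architecture matches the paper's: determine the exponents $\alpha,\beta,\gamma,\delta$ from Euler characteristics of twists of ${\bf v}$, build a complete family of $F$-prioritary sheaves as cokernels of general maps between direct sums drawn from the strong exceptional collection $\OO(-E-(e+1)F),\OO(-E-eF),\OO(-F),\OO$, and conclude by irreducibility of $\cP_F({\bf v})$. Your step (2) worry is also not where the difficulty lies: injectivity with torsion-free cokernel and $F$-prioritariness of the cokernel are handled uniformly for all $e\geq 0$ by the general machinery for strong exceptional collections (Theorem \ref{thm-resComplete}), since the bundles $\sHom(\cE,\cF_j)$ in play are $\OO(F)$, $\OO(E+eF)$, $\OO(E+(e+1)F)$, which are globally generated on every $\F_e$.

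The genuine gap is in your step (1), which you pass over as ``fix a suitable twisting line bundle $L$'' and ``Euler characteristic bookkeeping.'' Once $L$ is fixed, the exponents are forced to be $\alpha=-\chi({\bf v}(-L-E-F))$, $\beta=-\chi({\bf v}(-L-E))$, $\gamma=-\chi({\bf v}(-L-F))$, $\delta=\chi({\bf v}(-L))$, and the entire content of the theorem is that some integral $L$ makes all four of these \emph{nonnegative simultaneously}. This is a lattice-point problem: writing $L=L_{a,b}$ centered at $\nu({\bf v})-\tfrac12 K_X$, the locus $\chi({\bf v}(-L_{a,b}))=0$ is the hyperbola $\Delta({\bf v})=a(b-\tfrac12 ae)$ with asymptotes $a=0$ and $b=\tfrac12 ae$, and one must find a lattice point $(a,b)$ below the left branch with $(a+1,b)$, $(a,b+1)$, $(a+1,b+1)$ all between the branches. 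The hypothesis $e\geq 2$ enters exactly here and nowhere else: the asymptote $b=\tfrac12 ae$ has slope $e/2\geq 1$, so translating a point below it by $(1,1)$ keeps it below, which forces $(a+1,b+1)$ under the right branch. For $e=0,1$ this fails when $\Delta({\bf v})$ is small (e.g.\ $e=0$, $\nu=\tfrac12E+\tfrac12F$, $0<\Delta<1/4$ admits no valid $L$ at all), which is why the full statement imposes $\Delta\geq 1/4$ resp.\ $1/8$ there. Your proposal attributes the role of $e\geq 2$ to positivity in the degeneracy-locus step, which is a misdiagnosis; without the hyperbola/lattice argument your proof does not get off the ground, because you cannot even write down a candidate resolution with nonnegative exponents.
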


We use Theorem \ref{thm-IntroGaeta} in order to analyze the most challenging case in the classification of Chern characters such that the general bundle is globally generated.  We additionally anticipate resolutions of this type will be useful in studying various questions related to generic vector bundles on Hirzebruch surfaces.

\subsection*{Organization of the paper} In \S \ref{sec-Prelim}, we collect preliminary facts concerning the geometry of Hirzebruch surfaces and moduli of sheaves.  In \S \ref{sec-BN}, we prove a strengthened version of Theorem \ref{thm-BNintro} and compute the Betti numbers of a general sheaf. In \S \ref{sec-Gaeta}, we prove a strengthened version of Theorem \ref{thm-IntroGaeta}.  We then classify characters such that the general sheaf is globally generated in \S\ref{sec-gg}.  We close the paper with some remarks on the open question of determining the Chern characters of ample vector bundles.

\subsection*{Acknowledgements} We would like to thank Aaron Bertram, Lawrence Ein, John Lesieutre, and Daniel Levine for useful discussions. 

\section{Preliminaries}\label{sec-Prelim}

In this section, we collect basic facts which we will use in the rest of the paper.

\subsection{Hirzebruch surfaces} We refer the reader to \cite{Beauville},  \cite{CoskunScroll} and \cite{Hartshorne} for detailed expositions on Hirzebruch surfaces.  Let $e$ be a nonnegative integer and let $\F_e$ denote the ruled surface $\PP(\OO_{\PP^1} \oplus \OO_{\PP^1}(e))$. Let $\pi: \F_e \rightarrow \PP^1$ be the natural projection. Let $F$ be the class of a fiber and let $E$ be the class of the section of self-intersection $-e$. The intersection pairing on $\F_e$ is given by $$E^2 = -e, \quad F^2 =0, \quad F \cdot E = 1.$$ The effective cone of $\F_e$ is generated by $E$ and $F$. In fact, $$h^0(\F_e,\OO_{\F_e}( aE + bF)) > 0$$ if and only if $a, b \geq 0$. 
Dually, the nef cone of $\F_e$ is generated by $E+eF$ and $F$. The canonical divisor is given by $K_{\F_e} = -2E - (e+2)F$. By Serre duality, $h^2(\F_e, \OO_{\F_e}(aE + bF)) > 0$ if and only if $a \leq -2$ and $b \leq -e-2$. The following theorem summarizes the cohomology of line bundles on $\F_e$ (also see \cite{CoskunScroll, CoskunHuizengaWBN, Hartshorne}).

\begin{theorem}\label{thm-lineBundle}
Let $L=\OO_{\F_e}(aE + bF)$ be a line bundle on $\F_e$.  Then \begin{enumerate}
\item We have $$\chi(L) = (a+1)(b+1)-e \frac{a(a+1)}2.$$
\item If $L\cdot F \geq -1$, then $h^2(\F_e,L)=0$.  
\item If $L\cdot F \leq -1$, then $h^0(\F_e,L)=0$.
\item In particular, if $L\cdot F = -1$, then $L$ has no cohomology in any degree.
\end{enumerate} Now suppose $L\cdot F > -1$.  Then $h^2(\F_e,L)=0$, so either of the numbers $h^0(\F_e,L)$ or $h^1(\F_e,L)$ determine the cohomology of $L$.  These can be determined as follows.
\begin{enumerate}

\item[(5)] If $L\cdot E \geq -1$, then $H^1(\F_e,L)=0$, and so $h^0(\F_e,L) = \chi(L)$. 

\item[(6)] If $L\cdot E < -1$, then $H^0(\F_e,L) \cong H^0(\F_e,L(-E))$, and so the cohomology of $L$ can be determined inductively using (3) and (5).
\end{enumerate}
(If $L\cdot F < -1$ then the cohomology of $L$ can be determined by Serre duality.)
\end{theorem}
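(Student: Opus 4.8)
The plan is to reduce every assertion to the two numerical quantities $L\cdot F = a$ and $L\cdot E = b - ae$, together with Riemann--Roch, Serre duality, the description of the effective cone, and the exact sequences obtained by restricting line bundles to the curve $E$ and to a fiber $F$.

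Part (1) is Riemann--Roch on the surface: $\chi(L) = \chi(\OO_{\F_e}) + \tfrac12 L\cdot(L - K_{\F_e})$, where $\chi(\OO_{\F_e}) = 1$ since $\F_e$ is rational; plugging in $L^2 = 2ab - ea^2$ and $L\cdot K_{\F_e} = ae - 2a - 2b$ and expanding gives the stated polynomial. Parts (2) and (3) are immediate from the facts recorded just above the statement: $h^0(\OO_{\F_e}(cE+dF))$ is nonzero only if $c,d\geq 0$, and (by Serre duality on $\F_e$) $h^2(\OO_{\F_e}(cE+dF))$ is nonzero only if $c\leq -2$ and $d\leq -e-2$; since $L\cdot F = a$, the hypothesis $a\leq -1$ forces $h^0(L)=0$, and the hypothesis $a\geq -1$ forces $h^2(L)=0$. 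Part (4) is the conjunction of (2) and (3) together with the observation that $\chi(L)=0$ when $a=-1$, which then forces $h^1(L)=0$ as well.

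The only step with genuine content is (5). Here I would induct on $a = L\cdot F \geq 0$ using the restriction sequence
$$0 \to L(-E) \to L \to L|_E \to 0,$$
in which $L|_E \cong \OO_{\PP^1}(L\cdot E)$, so that $H^1(E, L|_E) = 0$ because $L\cdot E \geq -1$. When $a = 0$ the bundle $L = \OO_{\F_e}(bF)$ is pulled back from $\PP^1$ with $b = L\cdot E \geq -1$, so a Leray spectral sequence argument using $R^1\pi_*\OO_{\F_e} = 0$ gives $H^1(\F_e, L) = H^1(\PP^1, \OO_{\PP^1}(b)) = 0$. For $a\geq 1$ the twist $L(-E)$ satisfies $L(-E)\cdot F = a-1\geq 0$ and $L(-E)\cdot E = L\cdot E + e \geq -1$, so $H^1(\F_e,L(-E)) = 0$ by induction, and the long exact sequence then forces $H^1(\F_e,L)=0$. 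Since $h^2(L) = 0$ by (2), this yields $h^0(L) = \chi(L)$. (One could instead deduce (5)---and in fact the full cohomology computation---by pushing $L$ forward to $\PP^1$, since $\pi_*\OO_{\F_e}(aE)$ splits as a sum of line bundles and $R^1\pi_*\OO_{\F_e}(aE)=0$ for $a\geq 0$.)

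For (6) the same restriction sequence now has $H^0(E, L|_E) = 0$ because $\deg L|_E = L\cdot E < -1$, whence $H^0(\F_e, L) \cong H^0(\F_e, L(-E))$. Iterating this substitution as long as $L\cdot F \geq 0$ and $L\cdot E < -1$ decreases $L\cdot F$ by one at each step, so after finitely many steps one reaches a twist with $L\cdot F = -1$ (whose $H^0$ vanishes by (3)) or a twist satisfying the hypotheses of (5); in either case $h^0(L)$ is determined, and since $h^2(L) = 0$ by (2), Riemann--Roch recovers $h^1(L)$. Finally, if $L\cdot F < -1$ then $(K_{\F_e} - L)\cdot F = -2 - a \geq 0$, so Serre duality reduces the computation to one of the cases already treated. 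I expect the only real obstacle is the vanishing $H^1(\F_e,L)=0$ in (5); everything else is formal bookkeeping with Riemann--Roch, Serre duality, and the effective cone.
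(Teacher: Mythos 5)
Your proposal is correct and follows essentially the same route as the paper: Riemann--Roch for (1), the effective cone and Serre duality for (2)--(4), and the restriction sequence $0\to L(-E)\to L\to L|_E\to 0$ iterated in the $E$-direction for (5) and (6). The only cosmetic difference is in (5), where you stop the descent at $a=0$ and invoke the pushforward to $\PP^1$, whereas the paper descends one step further to $\OO_{\F_e}(-E+bF)$, which has no cohomology by (4); both base cases work equally well.
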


While the proof is well-known, we include it since the argument is relevant to our approach for vector bundles.

\begin{proof}
Part (1) is just Riemann-Roch.  Part (3) comes from the description of the effective cone, and (2) follows by Serre duality.  Part (4) is just a combination of (1)-(3).

(5) Suppose $a=L\cdot F \geq 0$ and $L\cdot E \geq -1$, and consider the restriction sequence $$0\to L(-E)\to L \to L|_E\to 0.$$  Then $H^1(E,L|_E)=0$, so $H^1(\F_e,L)$ is a quotient of $H^1(\F_e,L(-E))$.  Repeating this process, we eventually find that $H^1(\F_e,L)$ is a quotient of $H^1(\F_e,L(-(a+1)E)) = H^1(\F_e,\OO_{\F_e}(-E+bF))=0$ by (4).  Therefore $H^1(\F_e,L)=0$.

(6) The isomorphism $H^0(\F_e,L)\cong H^0(\F_e,L(-E))$ comes immediately from the restriction sequence.  When we twist $L$ by $-E$, the intersection number with $E$ increases by $e$ and the intersection number with $F$ decreases by $1$.  Thus there is some smallest integer $m > 0$ such that either $L(-mE)\cdot F \leq -1$ or $L(-mE) \cdot E \geq -1$, and by induction $h^0(\F_e,L) = h^0(\F_e,L(-mE))$.  If $L(-mE)\cdot F \leq -1$, we have $h^0(\F_e,L)= 0$ by (3).  On the other hand, if $L(-mE)\cdot F > -1$ and $L(-mE)\cdot E \geq -1$, then we have $h^0(\F_e,L) = \chi(L(-mE))$ by (5).  
\end{proof}

\begin{remark} 
In particular, the line bundles $$\OO_{\F_e}(-F), \quad \OO_{\F_e}(-2E-(e+1)F), \quad {\mbox{and}} \quad \OO_{\F_e}(-E+bF) \quad (b\in \Z)$$ all have no cohomology in any degree.
\end{remark}

\subsection{Numerical invariants and semistability} We refer the reader to \cite{CoskunHuizengaGokova, HuybrechtsLehn, LePotier} for more details on moduli spaces of vector bundles on surfaces.  Let $X$ be a surface and let $H$ be an ample divisor on $X$.  For a sheaf $\cV$ (or Chern character ${\bf v}$) of positive rank we respectively define the \emph{$H$-slope}, \emph{total slope}, and \emph{discriminant:} $$\mu_H(\cV) = \frac{c_1(\cV)\cdot H}{r({\cV})H^2} \qquad \nu({\bf v}) = \frac{c_1(\cV)}{r(\cV)} \qquad \Delta({\cV}) = \frac{1}{2}\nu({\cV})^2 - \frac{\ch_2({\cV})}{r(\cV)}.$$ The discriminant has the following important properties: \begin{enumerate} \item If $L$ is a line bundle then $\Delta(L) = 0$.  
\item If $\cV$ is torsion-free and $\cW$ is a nonzero vector bundle, then $\Delta(\cV \te \cW) = \Delta(\cV) + \Delta(\cW)$.  In particular, $\Delta(\cV\te L) = \Delta(\cV)$ for any line bundle $L$.
\end{enumerate}
If we put $P(\nu) = \chi(\OO_X)+\frac{1}{2}\nu\cdot (\nu-K_X)$, then the Riemann-Roch formula reads $$\chi(\cV) = r(\cV)(P(\nu(\cV))-\Delta(\cV)).$$ In the special case of $\F_e$, if $\cV$ has rank $r$ and total slope $\nu(\cV) = \frac{k}{r} E + \frac{l}{r} F$, then the term $P(\nu(\cV))$ becomes $$P(\nu(\cV)) = \left(\frac{k}{r}+1\right)\left(\frac{l}{r}+1\right) - \frac{ek}{2r}\left(\frac{k}{r}+1\right).$$ 
We will find the following easy consequence of Hirzebruch-Riemann-Roch useful.
\begin{lemma}\label{lem-eulerchar}
Let $X$ be a smooth projective surface with canonical class $K_X$. Let $\cV$ be a torsion-free sheaf of rank $r$ on $X$ and let $L$ be a line bundle on $X$. Then 
$$\chi(\cV \otimes L) = \chi(\cV) + L \cdot c_1 (\cV) + r(\chi(L)-\chi(\OO_X)).$$
\end{lemma}

A torsion-free sheaf $\cV$ on $X$ is \emph{$\mu_H$-semistable} if whenever $\cW\subset \cV$ is a nonzero subsheaf we have $\mu_H(\cW) \leq \mu_H(\cV)$.  Ordinary $H$-Gieseker semistability implies $\mu_H$-semistability, but all the theorems in this paper will hold for the weaker $\mu_H$-semistability.  The Bogomolov inequality implies that $\Delta(\cV)\geq 0$ for any $\mu_H$-semistable sheaf.

\subsection{Prioritary sheaves}  While our results are perhaps the most interesting in the context of semistable sheaves, stronger results can be proved by working with the easier notion of prioritary sheaves.

\begin{definition}
Let $D$ be a divisor on a smooth surface $X$. A torsion free sheaf $\cV$ is called \emph{$D$-prioritary}  if $\Ext^2(\cV, \cV(-D))=0$.

We write $\cP_D({\bf v})$ (or $\cP_{X,D}({\bf v})$ if $X$ is not clear) for the stack of $D$-prioritary sheaves on $X$.
\end{definition}

In this paper, we will primarily consider $F$-prioritary sheaves on $\F_e$, where $F$ is the class of a fiber. If $H$ is any ample class on $\F_e$ and $\cV$ is a torsion-free $\mu_H$-semistable sheaf, then $\cV$ is automatically $F$-prioritary. Indeed, suppose $\cV$ is $\mu_H$-semistable.  By Serre duality
$$\Ext^2(\cV, \cV(-F))= \Hom(\cV, \cV(K_{\F_e}+F))^*.$$ Since $K_{\F_e}+F$ is anti-effective, we have $(K_{\F_e}+F)\cdot H < 0$.  Therefore $\mu_H(\cV) > \mu_H(\cV(K_{\F_e}+F))$, and $\Hom(\cV,\cV(K_{\F_e}+F))=0$ by $\mu_H$-semistability.  Therefore $\cV$ is $F$-prioritary.  It follows from openness of stability that the stack $\cM_H^{\mu{\textit{-ss}}}({\bf v})$ is an open substack of $\cP_F({\bf v})$.  Furthermore, if there are $\mu_H$-semistable sheaves of character ${\bf v}$, then it is dense by the following theorem of Walter \cite{Walter}.

\begin{theorem}\label{thm-Walter}
Let $\pi:X\to \P^1$ be a geometrically ruled surface with fiber class $F$, and let ${\bf v}\in K(X)$ have positive rank.  Then the stack $\cP_F({\bf v})$ is irreducible. Furthermore, if $r({\bf v}) \geq 2$ and $\cP_F({\bf v})$ is nonempty, then a general $\cV\in \cP_F({\bf v})$ is a vector bundle.
\end{theorem}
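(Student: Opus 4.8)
\emph{Proof proposal.}
Write $r=r({\bf v})$ and $\Delta=\Delta({\bf v})$. The plan has three parts: first show that $\cP_F({\bf v})$ is a smooth algebraic stack, so that irreducibility is equivalent to connectedness; then prove irreducibility by stratifying according to the generic splitting type along fibers and inducting on the discriminant via elementary modifications; and finally deduce the statement about local freeness. For smoothness, the key point is that an $F$-prioritary sheaf $\cV$ automatically has $\Ext^2(\cV,\cV)=0$. Indeed, Serre duality on $\F_e$ identifies this group with $\Hom(\cV,\cV\otimes K_{\F_e})^\vee$, and since $\OO_{\F_e}(F)$ has a nonzero global section, tensoring by it embeds $\Hom(\cV,\cV(K_{\F_e}))$ into $\Hom(\cV,\cV(K_{\F_e}+F))\cong\Ext^2(\cV,\cV(-F))^\vee=0$. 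Standard deformation theory then shows $\cP_F({\bf v})$ is smooth with tangent space $\Ext^1(\cV,\cV)$ at $\cV$ and pure dimension $-\chi(\cV,\cV)=2r^2\Delta-r^2$; in particular it is irreducible if and only if it is connected.

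For irreducibility, restricting a torsion-free sheaf $\cV$ to a general fiber $f\cong\P^1$ gives a splitting type $\cV|_f\cong\bigoplus\OO_{\P^1}(a_i)$, and the generic splitting type is upper semicontinuous on $\cP_F({\bf v})$. The balanced type $\vec a_0$ (all $|a_i-a_j|\le 1$ with $\sum a_i=c_1({\bf v})\cdot F$) therefore occurs on a dense open substack $\cP_F({\bf v})_{\vec a_0}$, and dimension counts show that the other splitting strata, as well as the locus of non-locally-free sheaves, have strictly smaller dimension. Since $\cP_F({\bf v})$ is smooth its irreducible components are disjoint, so it suffices to prove that $\cP_F({\bf v})_{\vec a_0}$ is irreducible. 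The main device is the elementary modification along a general fiber, $0\to\cV'\to\cV\to i_*\OO_f(d)\to 0$, where $\OO_f(d)$ is a quotient line bundle of $\cV|_f$; for suitably general choices this preserves the generic splitting type, preserves local freeness, preserves (crucially) the $F$-prioritary condition, and changes the discriminant by $\tfrac{d}{r}-\tfrac{c_1({\bf v})\cdot F}{r^2}$. Taking $d=\min_i a_i$ strictly decreases $\Delta$ unless $\vec a_0$ is perfectly balanced.

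When $\vec a_0$ is perfectly balanced, twisting by a power of $F$ makes $\cV$ generically fiberwise trivial, $\pi_*\cV$ is then a rank-$r$ bundle on $\P^1$, and the natural map $\pi^*\pi_*\cV\hookrightarrow\cV$ exhibits $\cV$ as a modification of a pullback from $\P^1$; this yields an irreducible family over the irreducible stack of rank-$r$ bundles on $\P^1$. Otherwise I induct on $\Delta$. The base case is the minimal-discriminant stratum, which one identifies with an explicit irreducible family of extensions of direct sums of line bundles having the balanced fiber degrees. For the inductive step, the general member of $\cP_F({\bf v})_{\vec a_0}$ is realized as the middle term of a sequence $0\to\cV'\to\cV\to i_*\OO_f(\min_i a_i)\to 0$ with $\cV'$ the general member of the inductively irreducible stratum of smaller discriminant; since the total space of such extensions over that stratum is irreducible (it is a vector bundle over the product of that stratum with $\P^1$) and dominates $\cP_F({\bf v})_{\vec a_0}$, the latter is irreducible, and hence so is $\cP_F({\bf v})$. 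Finally, for the last claim, assume $r\ge 2$: a non-locally-free $\cV\in\cP_F({\bf v})$ fits in $0\to\cV\to\cV^{\vee\vee}\to T\to 0$ with $T$ of finite length $\ell\ge 1$ and $\Delta(\cV^{\vee\vee})=\Delta-\ell/r$, so such sheaves sweep out a family fibered over locally free sheaves of that smaller discriminant with fibers inside $\mathrm{Quot}^\ell$, of dimension at most $\dim\cP_F({\bf v})-(r-1)\ell<\dim\cP_F({\bf v})$; since local freeness is open and $\cP_F({\bf v})$ is irreducible, the general sheaf is locally free (this is also visible directly from the extensions used in the induction).

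I expect the principal difficulty to be the inductive step: proving that the general $F$-prioritary sheaf of a given discriminant and generic splitting type really does arise as an elementary modification of the general sheaf of smaller discriminant, i.e., that the modification construction is dominant onto $\cP_F({\bf v})_{\vec a_0}$. Checking that the modifications can be arranged to preserve the $F$-prioritary condition and pinning down the minimal-discriminant base case are the key supporting points; the dimension estimates for the auxiliary strata and the local-freeness statement should then be comparatively routine.
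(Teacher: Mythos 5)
This statement is quoted from Walter \cite{Walter}; the paper gives no proof of it, so there is nothing in-text to compare your argument against. Judged on its own terms, your outline follows what is essentially Walter's strategy (smoothness of the stack, reduction to the balanced splitting stratum, explicit description of that stratum), and several individual points are correct: the Serre-duality argument that $F$-prioritary implies $\Ext^2(\cV,\cV)=0$, the dimension $2r^2\Delta-r^2$, and the formula $\Delta'-\Delta=\tfrac1r\bigl(d-\tfrac{c_1({\bf v})\cdot F}{r}\bigr)$ for a modification along a fiber. But two steps carry genuine gaps. First, the density of the balanced stratum does not follow from semicontinuity alone: semicontinuity only says that each irreducible component has a well-defined most-balanced generic splitting type, not that this type is the balanced one, and the ``dimension counts'' you invoke for the unbalanced strata are never performed (and are not routine, since they must use the prioritary hypothesis --- for arbitrary torsion-free sheaves the unbalanced strata can be large). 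The efficient argument, which this paper isolates as Proposition \ref{prop-balanced}, is that a complete family of $F$-prioritary sheaves restricts to a complete family on a fiber, and a complete family on $\P^1$ is generically balanced; smoothness of the stack makes its versal families complete, so the balanced locus is dense in every component with no stratum-by-stratum estimate needed.

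Second, the induction on $\Delta$ is not closed up. You need (i) termination, i.e., a lower bound on $\Delta$ over the nonempty strata with fixed rank, fiber degree and balanced splitting type --- the ``minimal-discriminant stratum'' is invoked but never identified, and it is not obvious a priori that such a minimum exists; and (ii) the dominance claim, namely that the kernel $\cV'$ of a general $\cV$ along a general fiber is itself general in the lower stratum, or at least that the incidence correspondence of pairs $(\cV',\cV)$ is irreducible and dominates both sides. You correctly flag (ii) as the main difficulty, but it is exactly the content of the theorem in this approach: it requires checking that modifications along fibers preserve the prioritary condition (an argument parallel to Lemma \ref{lem-elementaryprioritary}, not supplied) and that the extension spaces $\Ext^1(i_*\OO_f(d),\cV')$ have constant dimension over the lower stratum so that the total space is irreducible. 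Until (i) and (ii) are supplied the irreducibility argument is a plan rather than a proof; the final local-freeness step is fine modulo the (standard but unchecked) fact that $\cV^{\vee\vee}$ is again $F$-prioritary.
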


In particular, to show that a general $\mu_H$-semistable sheaf on $\F_e$ of character ${\bf v}$ satisfies some open property, it is sufficient to produce an $F$-prioritary sheaf with that property.  

Every vector bundle on a rational curve is a direct sum of line bundles $\oplus_{i=1}^r\OO_{\PP^1}(a_i)$. The vector bundle is {\em balanced} if $|a_i -a_j| \leq 1$ for all $i, j$. The next proposition explains the importance of the prioritary condition.

\begin{proposition}\label{prop-balanced}
Let $C$ be a curve on a surface $X$ and let $\cF_s/S$ be a complete family of $C$-prioritary sheaves which are locally free on $C$.  Then the restricted family $\cF_s|_C/S$ is complete. 

Therefore, if $C$ is a rational curve, then $\cF_s|_C$ is balanced for all $s$ in an open dense subset of $S$.
\end{proposition}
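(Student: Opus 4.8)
The plan is to compare the deformation theory of $\cF_s$ on $X$ with that of $\cF_s|_C$ on $C$. Recall that a family $\cF_s/S$ is complete precisely when for every $s\in S$ the Kodaira--Spencer map $\kappa_s\colon T_sS\to\Ext^1_X(\cF_s,\cF_s)$ is surjective, equivalently the modular morphism from $S$ to the stack of coherent sheaves on $X$ is smooth. Since each $\cF_s$ is locally free along $C$, the restriction $\cF_s|_C$ is a flat family of vector bundles on $C$ with its own Kodaira--Spencer map $\kappa_s'\colon T_sS\to\Ext^1_C(\cF_s|_C,\cF_s|_C)$, and one checks that restriction of families is compatible with Kodaira--Spencer, so that $\kappa_s'=i^*\circ\kappa_s$, where $i\colon C\hookrightarrow X$ and $i^*\colon\Ext^1_X(\cF_s,\cF_s)\to\Ext^1_C(\cF_s|_C,\cF_s|_C)$ is the pullback on $\Ext$. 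Thus it is enough to prove that $i^*$ is surjective; completeness of $\cF_s|_C/S$ then follows, being a composite of two surjections.

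To see that $i^*$ is surjective, write $\cF=\cF_s$ and tensor the structure sequence $0\to\OO_X(-C)\to\OO_X\to\OO_C\to 0$ by $\cF$. As $\cF$ is torsion-free, $\OO_X(-C)\hookrightarrow\OO_X$ remains injective after tensoring, yielding a short exact sequence
$$0\to\cF(-C)\to\cF\to\cF|_C\to 0$$
on $X$ (where $\cF|_C$ now denotes $i_*i^*\cF$); this also shows the sheaf $\mathrm{Tor}_1^X(\cF,\OO_C)$ vanishes, so $Li^*\cF=i^*\cF$. Applying $\Hom_X(\cF,-)$ gives an exact sequence
$$\Ext^1_X(\cF,\cF)\to\Ext^1_X(\cF,\cF|_C)\to\Ext^2_X(\cF,\cF(-C)),$$
whose last term vanishes because $\cF$ is $\OO_X(C)$-prioritary. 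Finally, the adjunction $Li^*\dashv i_*$ combined with $Li^*\cF=i^*\cF$ identifies $\Ext^1_X(\cF,i_*i^*\cF)$ with $\Ext^1_C(i^*\cF,i^*\cF)$, and under this identification $\Ext^1_X(\cF,\cF)\to\Ext^1_X(\cF,\cF|_C)$ becomes $i^*$. Hence $i^*$ is surjective.

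For the final assertion, suppose $C\cong\P^1$, so each $\cF_s|_C\cong\bigoplus_i\OO_{\P^1}(a_i)$, and let $\mathcal{B}$ be the stack of vector bundles on $\P^1$ of the rank and degree common to the $\cF_s|_C$. The balanced locus is $\{s\in S : \Ext^1_{\P^1}(\cF_s|_C,\cF_s|_C)=0\}$, since $\Ext^1(\bigoplus_i\OO(a_i),\bigoplus_j\OO(a_j))=\bigoplus_{i,j}H^1(\OO(a_i-a_j))$ vanishes exactly when all $|a_i-a_j|\le 1$; by semicontinuity this locus is open. It is also dense: $\mathcal{B}$ is smooth and irreducible with the balanced bundles forming a dense open substack, and since $\cF_s|_C/S$ is complete the modular morphism $S\to\mathcal{B}$ is smooth, so $S$ is smooth (hence its connected components are irreducible) and $S\to\mathcal{B}$ is open, forcing the preimage of the balanced substack to be open and dense. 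The step I expect to need the most care is the functoriality $\kappa_s'=i^*\circ\kappa_s$; by contrast the prioritary hypothesis enters only through the single cohomology vanishing $\Ext^2_X(\cF,\cF(-C))=0$, and everything else is formal.
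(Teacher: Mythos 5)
Your proof is correct and follows essentially the same route as the paper: factor the Kodaira--Spencer map of the restricted family through $\Ext^1_X(\cF_s,\cF_s)\to\Ext^1_X(\cF_s,\cF_s|_C)\cong\Ext^1_C(\cF_s|_C,\cF_s|_C)$, and obtain surjectivity of the second map from the restriction sequence together with the prioritary vanishing $\Ext^2_X(\cF_s,\cF_s(-C))=0$. The only cosmetic differences are that you justify the identification of Ext groups via the adjunction $Li^*\dashv i_*$ (the paper uses local freeness along $C$ directly) and that you spell out the openness and density of the balanced locus, which the paper leaves as a standard fact about complete families on $\P^1$.
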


\begin{proof}
The Kodaira-Spencer map of the restricted family $$T_s S \rightarrow \Ext^1(\cF_s|_C, \cF_s|_C)$$ factors as a composition 
$$T_sS \rightarrow  \Ext^1(\cF_s,\cF_s) \rightarrow  \Ext^1_C (\cF_s|_C,\cF_s|_C).$$ The first map is surjective since the family $\cF$ is a complete family. We have an identification
\begin{align*}\Ext^1(\cF_s, \cF_s|_C) &= H^1(X, \cF_s^*  \otimes \cF_s|_C) \\&= H^1(C, \cF_s^*|_C \otimes \cF_s|_C) \\&= \Ext^1_C (\cF_s|_C, \cF_s|_C)\end{align*}
since $\cF_s$ is locally free along C. Hence,  the second map in the factorization appears in the long exact sequence obtained by applying $\Hom(\cF_s, \cF_s \otimes -)$ to
$$0 \rightarrow  \OO_X(-C) \rightarrow  \OO_X  \rightarrow \OO_C \rightarrow 0.$$
By the long exact sequence of cohomology 
$$\Ext^1(\cF_s,\cF_s) \rightarrow \Ext^1(\cF_s, \cF_s|_C) \rightarrow  \Ext^2(\cF_s, \cF_s(-C)) =0$$
we conclude that the Kodaira-Spencer map is surjective, and $\cF_s|_C/S$ is complete.  

In any complete family of vector bundles on $\P^1$ parameterized by an irreducible base, the general bundle is balanced.  The second statement follows.
\end{proof}

One advantage of working with prioritary sheaves is that they are well-behaved under elementary modifications.  The following result is well-known but we include the proof for completeness and lack of a single streamlined reference.

\begin{lemma}\label{lem-elementaryprioritary}
Let $L$ be a line bundle on a smooth surface $X$.  Let $\cV$ be a torsion-free sheaf on $X$, and let $\cV'$ be a general elementary modification of $\cV$ at a general point $p\in X$, defined as the kernel of a general surjection $\phi:\cV\to \OO_p$: $$0\to \cV'\to \cV \stackrel{\phi}{\to} \OO_p\to 0.$$
\begin{enumerate}
\item\label{elem1} If $\cV$ is $L$-prioritary, then $\cV'$ is $L$-prioritary.
\item\label{elem2} The sheaves $\cV$ and $\cV'$ have the same rank and $c_1$, and \begin{align*}\chi(\cV') &= \chi(\cV)-1\\
\Delta(\cV') &= \Delta(\cV) + \frac{1}{r}.\end{align*}
\item\label{elem3} We have $H^2(X,\cV) \cong H^2(X,\cV')$.
\item\label{elem4} If at least one of $H^0(X,\cV)$ or $H^1(X,\cV)$ is zero, then at least one of $H^0(X,\cV')$ or $H^1(X,\cV')$ is zero.  In particular, if $H^2(X,\cV)=0$ and $\cV$ has at most one nonzero cohomology group, then $H^2(X,\cV') = 0$ and $\cV'$ has at most one nonzero cohomology group.
\end{enumerate}
\end{lemma}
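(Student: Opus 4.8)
The plan is to read all four statements off the defining short exact sequence
\[
0 \to \cV' \to \cV \xrightarrow{\phi} \OO_p \to 0
\]
and its twist by $\OO_X(-L)$, namely $0 \to \cV'(-L) \to \cV(-L) \to \OO_p \to 0$ (using $\OO_p(-L)\cong\OO_p$). Two features of the general point $p$ will be used throughout: the torsion-free sheaf $\cV$ is locally free in a neighborhood of $p$ (it can fail to be locally free only at finitely many points of a smooth surface), and $\OO_p$ has projective dimension $2$ and $0$-dimensional support. Parts \eqref{elem2} and \eqref{elem3} are then pure bookkeeping: since $\ch_{\le 1}(\OO_p)=0$ and $\ch_2(\OO_p)=1$, additivity of the Chern character on the defining sequence shows that $\cV'$ and $\cV$ have the same rank and $c_1$ and that $\ch_2(\cV')=\ch_2(\cV)-1$; additivity of $\chi$ gives $\chi(\cV')=\chi(\cV)-1$; and substituting into the definition of $\Delta$ (with $\nu$ unchanged) gives $\Delta(\cV')=\Delta(\cV)+\tfrac{1}{r}$. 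For \eqref{elem3}, the long exact cohomology sequence of the defining sequence contains $H^1(\OO_p)\to H^2(\cV')\to H^2(\cV)\to H^2(\OO_p)$ with both outer terms zero (a sheaf supported at a point has no higher cohomology), so $H^2(X,\cV')\cong H^2(X,\cV)$.

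For part \eqref{elem1} I would run a short $\Ext$-chase through the auxiliary group $\Ext^2(\cV,\cV'(-L))$. Since $\cV$ is locally free near $p$, the sheaves $\mathcal{E}xt^i(\cV,\OO_p)$ vanish for $i>0$, and as everything in sight is supported at the single point $p$ the local-to-global spectral sequence gives $\Ext^i(\cV,\OO_p)=0$ for all $i>0$; likewise, because $\OO_p$ has a length-$2$ locally free (Koszul) resolution and $0$-dimensional support, $\Ext^i(\OO_p,\mathcal{G})=0$ for every sheaf $\mathcal{G}$ and every $i\ge 3$. Applying $\Hom(\cV,-)$ to the twisted sequence, the segment $\Ext^1(\cV,\OO_p)\to\Ext^2(\cV,\cV'(-L))\to\Ext^2(\cV,\cV(-L))$ has left term $0$ by the first computation and right term $0$ because $\cV$ is $L$-prioritary, so $\Ext^2(\cV,\cV'(-L))=0$. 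Then applying $\Hom(-,\cV'(-L))$ to the defining sequence, the segment $\Ext^2(\cV,\cV'(-L))\to\Ext^2(\cV',\cV'(-L))\to\Ext^3(\OO_p,\cV'(-L))$ has both outer terms $0$; hence $\Ext^2(\cV',\cV'(-L))=0$, i.e.\ $\cV'$ is $L$-prioritary. (Note this argument uses only generality of $p$, not of $\phi$.)

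For part \eqref{elem4}, the long exact cohomology sequence of the defining sequence reads
\[
0 \to H^0(\cV') \to H^0(\cV) \xrightarrow{\alpha} H^0(\OO_p) \to H^1(\cV') \to H^1(\cV) \to 0,
\]
since $H^1(\OO_p)=0$ and $\dim H^0(\OO_p)=1$. If $H^0(\cV)=0$ then $H^0(\cV')=0$ and we are done. If instead $H^1(\cV)=0$, it suffices to show that $\alpha$ is surjective for general $\phi$, for then $H^1(\cV')\cong H^1(\cV)=0$; and when $H^0(\cV)\ne 0$, a nonzero global section $s$ is nonvanishing at the general point $p$ (its vanishing locus is a proper closed subset since $\cV$ is torsion-free), so a general functional $\phi$ on the fiber $\cV|_p$ is nonzero on $s(p)$, which forces $\alpha\ne 0$ and hence onto. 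The ``in particular'' clause is then immediate: \eqref{elem3} gives $H^2(\cV')=0$, while ``$\cV$ has at most one nonzero cohomology group'' means one of $H^0(\cV),H^1(\cV)$ vanishes, whence by the above one of $H^0(\cV'),H^1(\cV')$ vanishes.

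The main obstacle is the chase in part \eqref{elem1}: one cannot compare $\Ext^2(\cV',\cV'(-L))$ with $\Ext^2(\cV,\cV(-L))$ directly, because $\cV'$ is typically \emph{not} locally free at $p$; the remedy is to route through $\Ext^2(\cV,\cV'(-L))$ and use separately the two vanishings $\Ext^1(\cV,\OO_p)=0$ (local freeness of $\cV$ near $p$) and $\Ext^{\ge 3}(\OO_p,-)=0$ (a point has codimension $2$ on the surface). A minor secondary point is that part \eqref{elem4} genuinely requires generality of the surjection $\phi$, whereas parts \eqref{elem1}--\eqref{elem3} use only generality of $p$.
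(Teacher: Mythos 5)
Your proof is correct and follows essentially the same route as the paper's: the same $\Ext$-chase through $\Ext^2(\cV,\cV'(-L))$ using $\Ext^1(\cV,\OO_p)=0$ (local freeness of $\cV$ at the general $p$) for part (1), the same bookkeeping for (2)--(3), and the same case analysis with a general $\phi$ making $H^0(\cV)\to H^0(\OO_p)$ surjective for (4). The only point worth adding is a one-line remark that $\cV'$, being a subsheaf of the torsion-free sheaf $\cV$, is itself torsion-free, which is part of the definition of $L$-prioritary.
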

\begin{proof}
(\ref{elem1}) Clearly $\cV'$ is torsion-free since $\cV$ is.  We have $\Ext^2(\cV, \cV (-L))=0$ since $\cV$ is $L$-prioritary. We would like to show that $\Ext^2(\cV', \cV'(-L))=0$.  Applying $\Ext(-, \cV'(-L))$ to the sequence, we obtain a surjection $$\Ext^2( \cV, \cV'(-L)) \rightarrow \Ext^2(\cV', \cV'(-L)) \to  0.$$ Hence, it suffices to show that $\Ext^2( \cV, \cV'(-L))=0$. Applying $\Ext(\cV, -)$ to the sequence twisted by $-L$,  we obtain
$$\Ext^1(\cV, \OO_p) \rightarrow \Ext^2(\cV, \cV' (-L)) \rightarrow \Ext^2(\cV, \cV (-L)).$$ We have $\Ext^2(\cV, \cV (-L))=0$ by the assumption that $\cV$ is $L$-prioritary and  $\Ext^1(\cV, \OO_p)=0$ because $\cV$ is locally free at the general point $p$. We conclude that $\cV'$ is $L$-prioritary.

(\ref{elem2}) The  first equality follows from the exact sequence, and Riemann-Roch gives the second.

(\ref{elem3}) This follows immediately from the long exact sequence in cohomology.

(\ref{elem4}) Consider the long exact sequence in cohomology $$0\to H^0(X,\cV')\to H^0(X,\cV) \to H^0(X,\OO_p)\to H^1(X,\cV')\to H^1(X,\cV) \to 0.$$ If $H^0(X,\cV) = 0$, then $H^0(X,\cV') = 0$.  Suppose $H^0(X,\cV)\neq 0$ and $H^1(X,\cV) = 0$.  Then by the choice of $\phi:\cV\to \OO_p$, the map $H^0(X,\cV)\to H^0(X,\OO_p) = \CC$ is surjective.  It follows that $H^1(X,\cV')= 0$.
\end{proof}

\subsection{Exceptional collections} We refer the reader to \cite{CoskunHuizengaWBN} for more details on the following discussion.  We will use exceptional collections of sheaves on $\F_e$ as basic building blocks for constructing sheaves with useful properties.

\begin{definition}
A coherent sheaf $\cE$ is called  {\em exceptional} if $\Ext^i(\cE, \cE) =0$ for $i \geq 1$ and $\Hom(\cE, \cE) =\CC$.  An ordered collection of exceptional objects $\cE_1, \dots, \cE_m$ is an {\em exceptional collection}  if $\Ext^i(\cE_t, \cE_s)=0$ for $s< t$ and all $i$. The exceptional collection is {\em strong}  if in addition $\Ext^i(\cE_s, \cE_t)=0$ for $s< t$ and $i\not= 0$. 
\end{definition}

\begin{example}
On $\F_e$, the line bundles $$\OO_{\F_e}(-E-(e+1)F), \OO_{\F_e}(-E-eF), \OO_{\F_e}(-F), \OO_{\F_e}$$ give a strong exceptional collection (see \cite[Example 3.2]{CoskunHuizengaWBN}).
\end{example}

In \cite{CoskunHuizengaWBN}, the majority of the following result was proved.

\begin{theorem}\label{thm-resComplete}
Let $\cE_1, \ldots, \cE_m, \cF_1,\ldots,\cF_n$ be a strong exceptional collection of vector bundles on a surface $X$, partitioned into two blocks, and let $L$ be a line bundle on $X$.  Assume that 
\begin{enumerate}
\item The sheaf $\sHom(\cE_i, \cF_j)$ is globally generated for all $i,j$,
\item $\Ext^1(\cE_i, \cF_j(-L))=0$ for all $i,j$, and
\item $\Ext^2(\cF_i, \cF_j(-L))=0$ for all $i,j$.
\end{enumerate}
Suppose $a_1,\ldots,a_m$ and $b_1,\ldots,b_n$ are nonnegative integers such that $\sum b_j r(\cF_j) - \sum a_i r(\cE_i) > 0$, and let $$U \subset \Hom\left(\bigoplus_{i=1}^m \cE_i^{a_i}, \bigoplus_{j=1}^n \cF_j^{b_j}\right)$$ be the open subset parameterizing injective sheaf maps with torsion-free cokernel.  For $\phi\in U$, let $\cV_\phi$ be the cokernel:  $$0\to \bigoplus_{i=1}^m \cE_i^{a_i} \fto{\phi} \bigoplus_{j=1}^n \cF_j^{b_j} \to \cV_\phi \to 0.$$
Then $U$ is nonempty, and the family $\cV_\phi/U$ is a complete family of $L$-prioritary sheaves.
\end{theorem}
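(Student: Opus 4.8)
\emph{Proof proposal.} This is essentially a theorem from \cite{CoskunHuizengaWBN}, so the plan is to follow the same three-step strategy: show $U\neq\emptyset$; show that every cokernel $\cV_\phi$ with $\phi\in U$ is $L$-prioritary; and show the family $\cV_\phi/U$ is complete. Throughout, write $A=\bigoplus_{i=1}^{m}\cE_i^{a_i}$ and $B=\bigoplus_{j=1}^{n}\cF_j^{b_j}$, so that $r(A)<r(B)$ by hypothesis. Only the first step carries real content; the other two are formal diagram chases using the exceptional collection axioms together with hypotheses (1)--(3), and in fact hypothesis (1) enters only in the first step.

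\emph{Nonemptiness of $U$.} A direct sum of globally generated sheaves is globally generated, so $\sHom(A,B)=\bigoplus_{i,j}\sHom(\cE_i,\cF_j)^{a_ib_j}$ is globally generated by (1); in particular the evaluation $\Hom(A,B)=H^0(\sHom(A,B))\to\sHom(A,B)_x$ is surjective for every point $x$. Since $r(A)<r(B)$ a general linear map $A_x\to B_x$ is injective, so a general $\phi$ has generic rank $r(A)$; and an incidence-variety dimension count shows that for general $\phi$ the degeneracy locus $\{x:\rk\phi_x<r(A)\}$ has at least its expected codimension $r(B)-r(A)+1\geq 2$, hence is finite on the surface $\F_e$. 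For such $\phi$ the sequence $0\to A\xrightarrow{\phi}B\to\coker\phi\to 0$ is a length-one locally free resolution of $\coker\phi$, so $\operatorname{depth}_x\coker\phi\geq 1$ everywhere; since the torsion subsheaf of $\coker\phi$ is supported on the finite degeneracy locus, it must vanish, so $\coker\phi$ is torsion-free. Thus $\phi\in U$, and for $\phi\in U$ the universal cokernel is flat over $U$ because $\phi$ remains fiberwise injective.

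\emph{The $L$-prioritary property.} Fix $\phi\in U$ and write $0\to A\xrightarrow{\phi}B\xrightarrow{p}\cV_\phi\to 0$. Applying $\Hom(-,\cF_j(-L))$ to this sequence and using $\Ext^1(A,\cF_j(-L))=\bigoplus_i\Ext^1(\cE_i,\cF_j(-L))^{a_i}=0$ by (2) and $\Ext^2(B,\cF_j(-L))=\bigoplus_k\Ext^2(\cF_k,\cF_j(-L))^{b_k}=0$ by (3), I obtain $\Ext^2(\cV_\phi,\cF_j(-L))=0$ for every $j$, hence $\Ext^2(\cV_\phi,B(-L))=0$. Then applying $\Hom(\cV_\phi,-)$ to the $(-L)$-twist $0\to A(-L)\to B(-L)\to\cV_\phi(-L)\to 0$ and using $\Ext^3=0$ on a surface, I conclude that $\Ext^2(\cV_\phi,\cV_\phi(-L))$ is a quotient of $\Ext^2(\cV_\phi,B(-L))=0$, so $\cV_\phi$ is $L$-prioritary.

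\emph{Completeness of the family.} For $\phi\in U$ the Kodaira--Spencer map $T_\phi U=\Hom(A,B)\to\Ext^1(\cV_\phi,\cV_\phi)$ of this family of quotients is, by the standard description, the composite $\Hom(A,B)\xrightarrow{p_*}\Hom(A,\cV_\phi)\xrightarrow{\partial}\Ext^1(\cV_\phi,\cV_\phi)$, where $\partial$ is the connecting map obtained by applying $\Hom(-,\cV_\phi)$ to $0\to A\to B\to\cV_\phi\to 0$. The first map is surjective because $\Ext^1(A,A)=\bigoplus_{i,i'}\Ext^1(\cE_i,\cE_{i'})^{a_ia_{i'}}=0$, since $\cE_1,\dots,\cE_m$ is an exceptional collection. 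The second map is surjective because $\coker\partial$ embeds into $\Ext^1(B,\cV_\phi)=\bigoplus_j\Ext^1(\cF_j,\cV_\phi)^{b_j}$, and each $\Ext^1(\cF_j,\cV_\phi)=0$: applying $\Hom(\cF_j,-)$ to $0\to A\to B\to\cV_\phi\to 0$ gives $\Ext^1(\cF_j,B)=\bigoplus_k\Ext^1(\cF_j,\cF_k)^{b_k}=0$ (strong exceptional collection) and $\Ext^2(\cF_j,A)=\bigoplus_i\Ext^2(\cF_j,\cE_i)^{a_i}=0$ (the $\cE_i$ precede the $\cF_j$). Hence the Kodaira--Spencer map is surjective at every point of $U$, so the family is complete. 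The one genuine obstacle is the nonemptiness step, which is exactly where global generation and the rank inequality are used to build an injective map with torsion-free cokernel.
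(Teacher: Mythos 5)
Your proposal is correct and is essentially a self-contained version of the argument the paper delegates to the cited Proposition 3.6 of \cite{CoskunHuizengaWBN}: nonemptiness of $U$ via global generation of $\sHom(A,B)$ and a dimension count on the degeneracy locus, $L$-prioritariness and completeness of the Kodaira--Spencer map via the same long-exact-sequence chases using hypotheses (2)--(3) and the exceptional-collection vanishings. The only substantive divergence is the torsion-freeness of $\coker\phi$, which you establish uniformly by Auslander--Buchsbaum (projective dimension $\leq 1$ forces depth $\geq 1$, ruling out torsion supported on the finite degeneracy locus), whereas the paper only needs to address this in the rank-one case and does so by twisting the resolution by a sufficiently anti-ample line bundle to kill sections; both arguments are valid.
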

\begin{proof}
If $r(\cV_\phi) = \sum b_j r(\cF_j) - \sum a_i r(\cE_i) \geq 2$, then this is  \cite[Proposition 3.6]{CoskunHuizengaWBN}.    When $r(\cV_\phi) = 1$, everything follows as in the $r(\cV_\phi) \geq 2$ case, except we must verify that $\cV_\phi$ is still torsion-free.  The rank of $\phi$ only drops in codimension $2$, so any torsion subsheaf $\cT\subset \cV_\phi$ has $0$-dimensional support.  If $\cT\neq 0$ and $M$ is any line bundle, then $H^0(X,\cT\te M) \neq 0$ and hence $H^0(X,\cV_\phi \te M)\neq 0$.  However, if $M$ is sufficiently anti-ample, then tensoring the resolution of $\cV_\phi$ by $M$ shows $H^0(X,\cV_\phi\te M) = 0$.  Therefore $\cT = 0$.
\end{proof}

\subsection{Globally generated vector bundles} We now recall some properties of globally generated sheaves for use in \S\ref{sec-gg}.  A coherent sheaf $\cV$ on a projective variety $X$ over a field $k$ is {\em globally generated} if the evaluation map $$H^0(X, \cV) \otimes_k \OO_X \to \cV$$ is surjective. The following lemma is immediate.

\begin{lemma}\label{lem-quotientofgg}
Any quotient of a globally generated sheaf is globally generated. 
\end{lemma}

If $\cV$ is globally generated, then there is an obvious restriction on $c_1(\cV)$.

\begin{lemma}\label{lem-ggnef}
Let $\cV$ be a globally generated vector bundle on $X$. Then for every  curve $C \subset X$, we have $$C \cdot c_1(\cV) \geq 0.$$
\end{lemma}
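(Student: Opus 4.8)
The statement to prove is Lemma~\ref{lem-ggnef}: if $\cV$ is a globally generated vector bundle on $X$ and $C\subset X$ is a curve, then $C\cdot c_1(\cV)\geq 0$.

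\textbf{Approach.} The plan is to restrict $\cV$ to the curve $C$ and reduce to the statement that a globally generated vector bundle on a curve has nonnegative degree. First I would note that by Lemma~\ref{lem-quotientofgg}, or simply because the evaluation map surjects and then restricts, the sheaf $\cV|_C$ is a globally generated coherent sheaf on $C$; since $\cV$ is locally free, $\cV|_C$ is locally free on $C$, and $\deg(\cV|_C) = C\cdot c_1(\cV)$ by the projection formula (more precisely, $c_1(\cV|_C) = c_1(\cV)|_C$ and degree on $C$ is computed by intersecting with $C$). So it suffices to show that a globally generated vector bundle $\cW$ on a (possibly singular, possibly reducible) projective curve $C$ has $\deg\cW\geq 0$.

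\textbf{Key steps.} For the curve statement, I would argue as follows. Pick a general global section $s\in H^0(C,\cW)$. Because $\cW$ is globally generated, at every point $p\in C$ the fiber $\cW\otimes k(p)$ is spanned by global sections, so a general section is nonvanishing at each of the finitely many associated points of $C$ (one can choose $s$ avoiding these finitely many proper linear conditions if $k$ is infinite; in general pass to an extension field, which does not change degrees). Then $s$ gives an injection $\OO_C\hookrightarrow \cW$ whose cokernel $\cQ$ is a coherent sheaf on $C$, and by Lemma~\ref{lem-quotientofgg} $\cQ$ is again globally generated. Taking determinants / first Chern classes in the short exact sequence $0\to\OO_C\to\cW\to\cQ\to 0$ gives $\deg\cW = \deg\cQ$ when $\cQ$ has rank $r-1$ (and $\deg\cW = \deg\cQ$ still holds because $\deg\OO_C = 0$, regardless of torsion, since $c_1$ is additive on short exact sequences). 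Now induct on the rank: the rank drops because $s$ was nonvanishing at the generic point of each component, so $\cQ$ has smaller rank; when the rank reaches $0$, $\cQ$ is a torsion sheaf, and a globally generated torsion sheaf on a curve has a nonnegative Euler characteristic / length, hence $\deg\cQ \geq 0$; actually the cleanest base case is rank $0$, where $\deg\cQ$ equals the length of $\cQ$, which is $\geq 0$. Unwinding the induction yields $\deg\cW\geq 0$.

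\textbf{Main obstacle.} The only real subtlety is choosing the section $s$ so that it does not vanish at any generic point of $C$, which is what makes the rank strictly drop and keeps the induction honest; this requires either an infinite base field or a harmless base change, and care if $C$ is reducible or non-reduced, so that one handles each irreducible component. Once that genericity is arranged, everything else is a routine additivity-of-$c_1$ computation in a short exact sequence together with the observation that quotients of globally generated sheaves are globally generated. I would state the curve case as a short auxiliary claim and then deduce the lemma by restriction.
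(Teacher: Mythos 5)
Your proof is correct and follows the same route as the paper: restrict to $C$ via the sequence $0\to\cV(-C)\to\cV\to\cV|_C\to 0$, note that $\cV|_C$ is globally generated by Lemma \ref{lem-quotientofgg}, and reduce to the fact that a globally generated vector bundle on a curve has nonnegative degree. The only difference is that you supply a (correct) proof of that last fact by induction on the rank, whereas the paper simply cites it as standard.
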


\begin{proof}
The  standard short exact sequence $$0 \rightarrow \cV(-C) \rightarrow \cV \rightarrow \cV|_C \rightarrow 0$$ implies that $\cV|_C$ is a quotient of $\cV$. Hence, by Lemma \ref{lem-quotientofgg}, $\cV|_C$ is globally generated. Since a globally generated vector bundle on a curve has nonnegative degree, we conclude that $C \cdot c_1(\cV) \geq 0.$
\end{proof}

We warn that while many properties of sheaves that we consider in this paper are open properties, the property of being globally generated is not in general open.  However, in a family $\cV/S$ of sheaves with no higher cohomology, the locus of globally generated sheaves is clearly open.  The next example shows that in an arbitrary family the locus of globally generated sheaves can fail to be open.

\begin{example}  
On $\P^2$, the locus in $M({\bf v})$ of globally generated semistable sheaves is not generally open, and even if it is nonempty, it need not be dense.   For example, for any $d\geq 1$ the bundle $\cV$ $$0\to \OO_{\P^2}(-d)\to \Hom(\OO_{\P^2}(-d),\OO_{\P^2})^* \te \OO_{\P^2} \to \cV\to 0$$ given by the cokernel of the coevaluation map is globally generated and semistable \cite[Lemma 9.2.3]{LePotier}.  Let ${\bf v} = \ch \cV$.  By the Brill-Noether theorem of G\"ottsche and Hirschowitz \cite{GottscheHirschowitz}, a general sheaf in $M({\bf v})$ has only one nonzero cohomology group.
\begin{itemize}
\item If $d=1$ or $2$, then the bundle $\cV$ is exceptional, and $M({\bf v})=\{[\cV]\}$ is a point.
\item If $d=3$, then $\chi({\bf v}) = r({\bf v})$, the moduli space $M({\bf v})$ is positive-dimensional, and the general sheaf in $M({\bf v})$ has $r({\bf v})$ sections but is not globally generated.
\item If $d\geq 4$, then $\chi({\bf v}) = 3d < {d+2 \choose 2}-1 = r({\bf v})$ and the general sheaf doesn't even have $r({\bf v})$ sections, so has no hope of being globally generated.
\end{itemize}
Thus, sheaves with ``extra'' sections can be globally generated even if the general sheaf is not.
\end{example}

\section{The Brill-Noether theorem}\label{sec-BN}
In this section, we prove the  Brill-Noether theorem for vector bundles on Hirzebruch surfaces and compute the Betti numbers of a general sheaf.  The next result summarizes the various results in this section; it is instructive to compare the statement with the line bundle case, Theorem \ref{thm-lineBundle}.

\begin{theorem}\label{thm-betti}
Let ${\bf v}\in K(\F_e)$ be a Chern character with positive rank $r=r({\bf v})$ and $\Delta({\bf v}) \geq 0$.  Then the stack $\cP_F({\bf v})$ is nonempty and irreducible. Let $\cV\in \cP_F({\bf v})$ be a general sheaf.
\begin{enumerate}
\item If we write $\nu({\bf v}) = \frac{k}{r}E + \frac{l}{r}F$, then \begin{align*}\chi({\bf v}) & = r(P(\nu({\bf v}))-\Delta({\bf v})) \\&= r \left(\left(\frac{k}{r}+1\right)\left(\frac{l}{r}+1\right) - \frac{ek}{2r}\left(\frac{k}{r}+1\right)-\Delta({\bf v})\right)\end{align*}
\item If $\nu({\bf v})\cdot F \geq -1$, then $h^2(\F_e,\cV)=0$.
\item If $\nu({\bf v})\cdot F \leq -1$, then $h^0(\F_e,\cV)=0$.
\item In particular, if $\nu({\bf v})\cdot F = -1$, then $h^1(\F_e,\cV) = - \chi({\bf v})$ and all other cohomology vanishes.
\end{enumerate}
Now suppose $\nu({\bf v})\cdot F > -1$.  Then $H^2(\F_e,\cV)=0$, so either of the numbers $h^0(\F_e,\cV)$ or $h^1(\F_e,\cV)$ determine the Betti numbers of $\cV$.  These can be determined as follows.
\begin{enumerate}
\item[(5)] If $\nu({\bf v})\cdot E \geq -1$, then $\cV$ has at most one nonzero cohomology group.  Thus if $\chi({\bf v})\geq 0$, then $h^0(\F_e,\cV) = \chi({\bf v})$, and if $\chi({\bf v})\leq 0$, then $h^1(\F_e,\cV) = -\chi({\bf v})$.
\item[(6)] If $\nu({\bf v})\cdot E < -1$, then $H^0(\F_e,\cV) \cong H^0(\F_e,\cV(-E))$, and so the Betti numbers  of $\cV$ can be determined inductively using (3) and (5).
\end{enumerate}
(If $\nu({\bf v})\cdot F < -1$ and $r({\bf v})\geq 2$, then the cohomology of $\cV$ can be determined by Serre duality.)
\end{theorem}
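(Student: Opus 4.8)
\emph{Proof plan.} The strategy is to reduce every assertion to producing a single well-chosen $F$-prioritary sheaf. Irreducibility of $\cP_F({\bf v})$ is Theorem~\ref{thm-Walter}, and nonemptiness when $\Delta({\bf v})\ge 0$ follows by exhibiting one member --- for instance a generic cokernel built, via Theorem~\ref{thm-resComplete} with prioritary class $F$, from the strong exceptional collection $\OO_{\F_e}(-E-(e+1)F),\OO_{\F_e}(-E-eF),\OO_{\F_e}(-F),\OO_{\F_e}$, or a direct sum of line bundles followed by general elementary modifications (Lemma~\ref{lem-elementaryprioritary}). Each of ``$H^2(\F_e,\cV)=0$'', ``$H^0(\F_e,\cV)=0$'', and --- once the value of $\chi({\bf v})$ is fixed --- ``$\cV$ has at most one nonzero cohomology group'' is an open condition on $\cP_F({\bf v})$, so by irreducibility it suffices to verify each on one sheaf, and the constancy of $\chi$ then pins down the exact Betti numbers in (4) and (5). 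I will assume $r\ge 2$, so that the general member is locally free by Theorem~\ref{thm-Walter}; rank one follows directly from Theorem~\ref{thm-lineBundle}, since every torsion-free rank one sheaf on $\F_e$ is $F$-prioritary and the cohomology of a general $I_Z(D)$ is governed by that of $\OO_{\F_e}(D)$ and the number of conditions imposed by a general $0$-dimensional $Z$.

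Parts (1)--(4) are quick. Part (1) is Hirzebruch--Riemann--Roch, already recorded. For (2) and (3) I would take $\cV$ general, hence locally free, and restrict to fibres: by Proposition~\ref{prop-balanced} the family $\cV|_F/\cP_F({\bf v})$ is complete, so $\cV|_F$ is balanced of degree $c_1(\cV)\cdot F=r\,\nu({\bf v})\cdot F$ for the general fibre $F$. If $\nu({\bf v})\cdot F\ge -1$, every summand of $\cV|_F$ has degree $\ge -1$, so $H^1(F,\cV|_F)=0$ for general $F$; hence $R^1\pi_*\cV$ is a torsion sheaf on $\P^1$ and $H^2(\F_e,\cV)=H^1(\P^1,R^1\pi_*\cV)=0$. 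If $\nu({\bf v})\cdot F\le -1$, every summand has degree $\le -1$, so $H^0(F,\cV|_F)=0$ for general $F$, whence the torsion-free sheaf $\pi_*\cV$ has rank $0$ and vanishes, and $H^0(\F_e,\cV)=0$. Part (4) is the conjunction of (1)--(3).

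Part (5) is the heart. Assume $\nu({\bf v})\cdot F>-1$ and $\nu({\bf v})\cdot E\ge -1$; then $H^2(\F_e,\cV)=0$ by (2), so I must exhibit an $F$-prioritary sheaf of character ${\bf v}$ with no $H^1$ if $\chi({\bf v})\ge 0$ and with no $H^0$ if $\chi({\bf v})<0$. The main tool is a Gaeta-type resolution
$$0\to L(-E-(e+1)F)^\alpha\to L(-E-eF)^\beta\oplus L(-F)^\gamma\oplus L^\delta\to\cV\to 0,$$
produced by Theorem~\ref{thm-resComplete} with prioritary class $F$ from the $L$-twist of the collection above; the hypotheses of that theorem hold for every line bundle $L$ (the $\Ext$-vanishings and global generations are twist-invariant and follow from Theorem~\ref{thm-lineBundle}), so for generic $\phi$ with $\beta+\gamma+\delta>\alpha$ the cokernel is torsion-free and $F$-prioritary, and these sheaves form a complete family. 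Then the cohomology of $\cV$ is read off the long exact sequence via Theorem~\ref{thm-lineBundle}: if $L$ is nef, each of $L,L(-F),L(-E-eF)$ has $H^1=H^2=0$ and $L(-E-(e+1)F)$ has $H^2=0$, forcing $H^1(\F_e,\cV)=H^2(\F_e,\cV)=0$; while if $L$ is chosen so that $L(-E-(e+1)F)$ is acyclic and $L,L(-F),L(-E-eF)$ have no sections and no $H^2$, then $H^0(\F_e,\cV)=H^2(\F_e,\cV)=0$. Since the syzygy making $L\oplus L(-E-(e+1)F)$ and $L(-E-eF)\oplus L(-F)$ agree in rank and $c_1$ lets one slide the multiplicities --- equivalently, since a general elementary modification (Lemma~\ref{lem-elementaryprioritary}(4)) preserves $F$-priority and ``at most one nonzero cohomology group and $H^2=0$'' --- it suffices to realize, for each admissible $(r,c_1)$, one such resolution with $\cV$ of minimal admissible discriminant; one can also feed in the case $\chi({\bf v})=0$ established in \cite{CoskunHuizengaWBN} (all four terms acyclic) to shorten the range $\chi({\bf v})<0$.

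For (6) I would descend along twists by $-E$. Assuming $\nu({\bf v})\cdot E<-1$ and $\nu({\bf v})\cdot F>-1$, exhibit one $F$-prioritary bundle whose restriction to $E$ has no global sections (again via a resolution by line bundles all of whose restrictions to $E$ have non-positive degree, or via a direct sum modified at general points off $E$), so that $H^0(E,\cV|_E)=0$ for the general $\cV$ by semicontinuity; the restriction sequence $0\to\cV(-E)\to\cV\to\cV|_E\to 0$ then gives $H^0(\F_e,\cV)\cong H^0(\F_e,\cV(-E))$, and since $\cV\mapsto\cV(-E)$ is an isomorphism $\cP_F({\bf v})\xrightarrow{\sim}\cP_F({\bf v}(-E))$ lowering $\nu\cdot F$ by $1$ and raising $\nu\cdot E$ by $e$, finitely many twists land in the situation of (3) or (5). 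The closing parenthetical is Serre duality: for $r\ge 2$ the involution $\cV\mapsto\cV^\vee(K_{\F_e})$ identifies general members of $\cP_F({\bf v})$ with those of $\cP_F({\bf v}^D)$ and exchanges $h^i$ with $h^{2-i}$, while $\nu({\bf v}^D)\cdot F=-\nu({\bf v})\cdot F-2\ge -1$ puts us back in the already-treated range. The obstacle I anticipate lies entirely in the constructions above: one must check that for \emph{every} admissible $(r,c_1)$ with $\nu\cdot F>-1$ and $\nu\cdot E\ge -1$ (respectively with $\nu\cdot E<-1$) there really is a twist $L$ and multiplicities producing the minimal admissible discriminant with the correct one-sided vanishing. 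When $\nu\cdot F\ge 0$ one can take $L$ globally generated, or even a balanced direct sum of line bundles; but in the ``small slope'' regime $\nu\cdot F\in(-1,0)$ --- where $P(\nu({\bf v}))$ can be negative, so $\chi({\bf v})<0$ and one cannot split off enough sections --- the construction must be genuinely non-split with $L$ chosen carefully, and making these choices uniformly is the core of the argument.
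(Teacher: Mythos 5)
The main gap is in your treatment of (5), precisely where you flagged your own worry. Your ``main tool'' there is an $L$-Gaeta-type resolution, but such resolutions do not exist for every character in the regime of (5). The Remark at the end of \S\ref{sec-Gaeta} gives explicit counterexamples: on $\F_0$ with $\nu({\bf v})=\frac{1}{2}E+\frac{1}{2}F$ and $0<\Delta({\bf v})<\frac{1}{4}$, no line bundle $L$ satisfies the inequalities (\ref{lb-ineqs}), so no sheaf of character ${\bf v}$ admits a Gaeta-type resolution; yet $\nu({\bf v})\cdot F=\nu({\bf v})\cdot E=\frac{1}{2}$ and $\chi({\bf v})>0$, so these characters fall squarely under (5). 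Your fallback --- reduce by elementary modifications to the minimal admissible discriminant and realize one resolution there --- does not close the gap, because for fixed $(r,c_1)$ the minimal $\Delta\geq 0$ lies in $[0,1/r)$ and lands inside the forbidden window (concretely: $r=6$, $c_1=3E+3F$, $\chi=13$, $\Delta=1/12$); and the $\chi=0$ input from \cite{CoskunHuizengaWBN} only helps when $\chi({\bf v})\leq 0$, while the bad examples have $\chi({\bf v})>0$. The repair is to abandon the two-term resolution as the base case: for any slope there is a line bundle $L$ and a direct sum $\cW$ of the four line bundles $\OO_{\F_e}(-E-(e+1)F),\OO_{\F_e}(-E-eF),\OO_{\F_e}(-F),\OO_{\F_e}$ with $\nu(\cW(L))=\nu({\bf v})$ and $\Delta(\cW(L))\leq 0$, because translates of a fixed parallelogram of slopes tile $N^1(\F_e)_\Q$ (Corollary \ref{cor-quad}); when $\nu({\bf v})\cdot F>-1$ and $\nu({\bf v})\cdot E\geq -1$ this $L$ may be taken nef, so $\cW(L)$ has no higher cohomology, and then $r(\Delta({\bf v})-\Delta(\cW(L)))\geq 0$ general elementary modifications produce the desired sheaf, since by Lemma \ref{lem-elementaryprioritary} they preserve $F$-priority, ``$h^2=0$'', and ``at most one nonzero cohomology group''. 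The one-sided vanishing has to be achieved on a sheaf with surplus sections and then degraded, not read off a resolution whose very existence forces $\Delta({\bf v})$ to be not too small.

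The rest is essentially sound, and your proof of (2) and (3) is a genuinely different (and clean) alternative to the paper's: restricting to a general fiber, using Proposition \ref{prop-balanced} to see $\cV|_F$ is balanced of slope $\nu({\bf v})\cdot F$, and running the Leray spectral sequence for $\pi$ correctly yields $h^2=0$ (resp.\ $h^0=0$), whereas the paper reads these vanishings off the explicit direct-sum model. Two smaller cautions for (6): a line bundle of degree $0$ on $E\cong\P^1$ has a section, so ``non-positive degree'' must be ``negative degree''; and more seriously, in a restricted resolution $0\to A|_E\to B|_E\to\cV|_E\to 0$ the group $H^0(E,\cV|_E)$ only injects into $H^1(E,A|_E)$ once $H^0(E,B|_E)=0$, and that $H^1$ is typically nonzero, so a resolution by line bundles negative on $E$ does not by itself kill $H^0(E,\cV|_E)$. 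Your alternative --- a direct sum modified at points off $E$ --- is the right move, and is cleanest phrased as in the paper: the general $\cV$ is also $E$-prioritary (Corollary \ref{cor-balanced}), hence $\cV|_E$ is balanced of slope $\nu({\bf v})\cdot E<-1$ and has no sections.
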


\begin{remark}
In particular, if $H$ is an arbitrary ample divisor on $\F_e$ and there are $\mu_H$-semistable sheaves of character ${\bf v}$, then Theorem \ref{thm-betti} allows us to compute the Betti numbers of a general sheaf $\cV\in M_H^{\mu\textit{-ss}}({\bf v})$.
\end{remark}

Statement (1) of the theorem is just Riemann-Roch, reproduced here for the reader's convenience.  The locus of sheaves $\cV\in \cP_F({\bf v})$ satisfying each of the statements (2)-(5) of the theorem is open in the stack $\cP_F({\bf v})$, so it suffices to produce a single sheaf $\cV\in \cP_F({\bf v})$ with the given cohomology.  We will construct such sheaves by using direct sums of line bundles as basic building blocks (\S \ref{ssec-ds}) and applying elementary modifications to them (\S \ref{ssec-elementary}).  Statement (6) follows easily from the observation that for a general $\cV\in \cP_F({\bf v})$ the restriction $\cV|_E$ is balanced (see Corollary \ref{cor-balanced}).

\subsection{Prioritary direct sums of line bundles}\label{ssec-ds}  We first show that for any rank $r\geq 1$ and slope $\nu = \frac{k}{r}E + \frac{l}{r}F$, there is a prioritary direct sum of line bundles with $\Delta \leq 0$. We identify $N^1(\F_e)_\Q \cong \Q^2$ with the $(\frac{k}{r},\frac{l}{r})$-plane.  The particular line bundles we use depend on $(\frac{k}{r},\frac{l}{r})$.  The next lemma shows that if this point is in a certain triangular region, then we can find such a direct sum of line bundles.

\begin{lemma}\label{lem-ds}
Let $a,b,c\geq 0$ be nonnegative integers, and let $$\cW = \OO_{\F_e}(-E-(e+1)F)^a \oplus \OO_{\F_e}(-F)^b \oplus \OO_{\F_E}^c.$$
\begin{enumerate}
\item The bundle $\cW$ is $F$-prioritary and $E$-prioritary, and $\cW$ has no higher cohomology.
\item We have $\Delta(\cW)\leq 0$.
\item Let $r\geq 1$ and represent a total slope $\frac{k}{r}E + \frac{l}{r}F \in N^1(\F_e)_\Q$ by the point $(\frac{k}{r},\frac{l}{r})\in \Q^2$.  
If $(\frac{k}{r},\frac{l}{r})$ is in the convex region with vertices $$(-1,-e-1),\, (0,-1),\, (0,0),$$ then it is the slope of a rank $r$ bundle $\cW$ as above.

\end{enumerate}
\end{lemma}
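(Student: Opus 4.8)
The plan is to establish the three parts in turn, all by direct computation with Theorem~\ref{thm-lineBundle}.

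\emph{Part (1).} Since $\cW$ is a direct sum of the three line bundles $\OO_{\F_e}(-E-(e+1)F)$, $\OO_{\F_e}(-F)$, and $\OO_{\F_e}$, each of the groups $\Ext^2(\cW,\cW(-F))$, $\Ext^2(\cW,\cW(-E))$, and $H^{\geq 1}(\F_e,\cW)$ splits as a finite direct sum of cohomology groups $H^2(\F_e, L_i^{-1}\otimes L_j(-D))$ (respectively $H^{\geq 1}$ of the summands themselves), where $L_i, L_j$ run over the three summands and $D \in \{E,F\}$. I would tabulate the finitely many line bundles $M$ that occur and verify case by case that $h^2(\F_e,M)$ vanishes: when $D=F$ every such $M$ satisfies $M\cdot F \geq -1$, so $h^2(\F_e,M)=0$ by Theorem~\ref{thm-lineBundle}(2); when $D=E$ the same holds, using Theorem~\ref{thm-lineBundle}(3) together with Serre duality in the few cases where $M\cdot F \leq -2$. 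For the vanishing of $H^{\geq 1}(\F_e,\cW)$ one needs only the Remark after Theorem~\ref{thm-lineBundle}: $\OO_{\F_e}(-E-(e+1)F)$ and $\OO_{\F_e}(-F)$ have no cohomology in any degree, and $\OO_{\F_e}$ has only $H^0$.

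\emph{Part (2).} I would first record the general identity that, for $\cV = \bigoplus_i L_i^{a_i}$ with $r = \sum_i a_i$ and $D_i = c_1(L_i)$, expanding $\Delta(\cV) = \tfrac12\nu(\cV)^2 - \ch_2(\cV)/r$ and symmetrizing gives
\[
\Delta(\cV) = -\frac{1}{4r^2}\sum_{i,j} a_i a_j\,(D_i - D_j)^2 .
\]
The pairwise differences of the three line bundles making up $\cW$ are $-E-eF$, $-E-(e+1)F$, and $-F$, whose self-intersections are $e$, $e+2$, and $0$, all nonnegative, so every term of the sum is $\geq 0$ and $\Delta(\cW) \leq 0$. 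This is the one place where the particular choice of the three line bundles is used in an essential way.

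\emph{Part (3).} Writing $\cW = \OO_{\F_e}(-E-(e+1)F)^a \oplus \OO_{\F_e}(-F)^b \oplus \OO_{\F_e}^c$ with $a+b+c=r$, its total slope corresponds in the $(\tfrac kr,\tfrac lr)$-plane to the convex combination $\tfrac{a}{r}(-1,-e-1) + \tfrac{b}{r}(0,-1) + \tfrac{c}{r}(0,0)$ of the three vertices. Matching this with $(\tfrac{k}{r},\tfrac{l}{r})$ forces $a = -k$, $b = ek+k-l$, $c = r+l-ek$, which are automatically integers with $a+b+c=r$; and the three constraints $a\geq 0$, $b\geq 0$, $c\geq 0$ are exactly $\tfrac{k}{r}\leq 0$, $\tfrac{l}{r}\leq (e+1)\tfrac{k}{r}$, and $\tfrac{l}{r} \geq e\tfrac{k}{r} - 1$, i.e.\ the three half-planes whose intersection is the triangle with the stated vertices. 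Hence a point of that triangle of the form $(\tfrac kr,\tfrac lr)$ with $k,l\in\Z$ produces nonnegative integer exponents $a,b,c$, and $\cW$ is the desired bundle.

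None of these steps is a genuine obstacle; the only thing requiring care is the bookkeeping, namely keeping straight the finite list of line bundles appearing in part (1) and checking in part (3) that the three positivity constraints on $(a,b,c)$ correspond to the three edges of the triangle. To streamline the writeup I would present the computations of part (1) as a single table.
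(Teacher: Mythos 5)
Your proposal is correct and follows essentially the same route as the paper: part (1) is the same direct check that every $H^2$ summand vanishes, and part (3) is the same linear system (your explicit solution $a=-k$, $b=(e+1)k-l$, $c=r+l-ek$ is the paper's unimodularity observation made concrete). The only cosmetic difference is in part (2), where you package the computation via the symmetrized identity $\Delta = -\tfrac{1}{4r^2}\sum_{i,j}a_ia_j(D_i-D_j)^2$ instead of expanding $\ch(\cW)$ directly; both yield $2r^2\Delta(\cW)=-a(be+ce+2c)\leq 0$.
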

\begin{proof}
(1) The vector space $\Ext^2(\cW, \cW(-F))$ is a direct sum of vector spaces of the form $H^2(\F_e, \OO_{\F_e}(\alpha E +\beta F))$ where $\alpha \geq -1$, and these are zero.  Therefore $\cW$ is $F$-prioritary.  Similarly, the vector space $\Ext^2(\cW,\cW(-E))$ is a direct sum of vector spaces of the form $H^2(\F_e,\OO_{\F_e}(\alpha E+\beta F))$ where $\beta \geq -e-1$, and these are again zero.  Therefore $\cW$ is $E$-prioritary.  The statement on cohomology follows at once from Theorem \ref{thm-lineBundle}.

(2) We compute $$\ch(\cW) = \left(a+b+c,-aE-(a(e+1)+b)F,\frac{a(e+2))}{2}\right).$$ Then \begin{align*}2r(\cW)^2\Delta(\cW) &=  \ch_1(\cW)^2-2r(\cW)\ch_2(\cW)\\&=-a^2e+2a(a(e+1)+b)-(a+b+c)a(e+2)\\
&= -a(be+ce+2c).\end{align*}
Therefore $\Delta(\cW)\leq  0$.

(3) The slope $(\frac{k}{r},\frac{l}{r})$ is in the convex region spanned by $(-1,-e-1)$, $(0,-1)$ and $(0,0)$ if and only if the vector $(k,l,r)$ is in the cone spanned by $(-1,-e-1,1)$, $(0,-1,1)$, and $(0,0,1)$.  This happens if and only if the linear system $$\begin{pmatrix}-1 & 0 & 0 \\ -e-1 & -1 & 0\\ 1 & 1 & 1\end{pmatrix}\begin{pmatrix} a\\ b \\ c\end{pmatrix} = \begin{pmatrix} k\\ l \\ r \end{pmatrix}$$ has a solution $(a,b,c) \in \Q^3_{\geq 0}$. But the matrix is in $\SL_3(\Z)$, so actually $(a,b,c)\in \Z^3_{\geq 0}$.  The corresponding bundle $\cW$ has rank $r$ and slope $(\frac{k}{r},\frac{l}{r})$.
\end{proof}

By an analogous construction we can further handle slopes lying in a larger quadrilateral region.  This quadrilateral has the advantage that its shifts by line bundles tile the whole $(\frac{k}{r},\frac{l}{r})$-plane.

\begin{corollary}\label{cor-quad}
Suppose $r\geq 1$ and the point $(\frac{k}{r},\frac{l}{r})$ lies in the parallelogram $Q$ bounded by the four vertices $$(-1,-e), \quad (0,0), \quad (0,-1) \quad \mbox{and}\quad (-1,-e-1).$$
Then there is a rank $r$ direct sum $\cW$ of copies of the line bundles $$\OO_{\F_e}(-E-eF),\quad  \OO_{\F_e},\quad  \OO_{\F_e}(-F),\quad \mbox{and} \quad \OO_{\F_e}(-E-(e+1)F)$$ such that  \begin{enumerate}
\item $\nu(\cW) = \frac{k}{r}E+ \frac{l}{r}F$,
\item $\cW$ is $F$- and $E$-prioritary, and
\item $\Delta(\cW) \leq 0$.
\end{enumerate}
\end{corollary}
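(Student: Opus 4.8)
The plan is to reduce to Lemma \ref{lem-ds} by splitting the parallelogram $Q$ into two triangles, one of which is literally the triangle handled by the lemma, and the other of which becomes that triangle after twisting by a suitable line bundle. Concretely, $Q$ has vertices $(-1,-e)$, $(0,0)$, $(0,-1)$, $(-1,-e-1)$; its diagonal from $(0,-1)$ to $(-1,-e)$ cuts it into the triangle $T_1$ with vertices $(-1,-e-1)$, $(0,-1)$, $(0,0)$ — which is exactly the triangle in Lemma \ref{lem-ds}(3) — and the triangle $T_2$ with vertices $(-1,-e)$, $(0,0)$, $(0,-1)$. (Of course one must double-check which diagonal actually separates $Q$ into these two triangles, i.e. that $Q$ is really the convex hull listed in the order $(-1,-e)$, $(0,0)$, $(0,-1)$, $(-1,-e-1)$; a quick look at the coordinates confirms $(-1,-e)$ and $(0,0)$ are the pair of "opposite" vertices, so the splitting diagonal is the segment from $(0,-1)$ to $(-1,-e-1)$, giving $T_1$ as in the lemma and $T_2$ its reflection.)

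First I would handle the case where $(\frac{k}{r},\frac{l}{r})\in T_1$: this is immediate from Lemma \ref{lem-ds}, which produces a rank $r$ direct sum $\cW$ of copies of $\OO_{\F_e}(-E-(e+1)F)$, $\OO_{\F_e}(-F)$, and $\OO_{\F_e}$ with the desired slope, and which is $F$- and $E$-prioritary with $\Delta(\cW)\le 0$. These three line bundles are among the four allowed in the statement of the corollary, so nothing more is needed. Next I would handle $(\frac{k}{r},\frac{l}{r})\in T_2$. The key observation is that $T_2$ is obtained from $T_1$ by the translation $(x,y)\mapsto (x,y)+\nu(\OO_{\F_e}(F))$, i.e. adding $(0,1)$: indeed $(-1,-e-1)+(0,1)=(-1,-e)$, $(0,-1)+(0,1)=(0,0)$, and $(0,0)+(0,1)=(0,1)$ — wait, that last vertex is wrong, so instead one should use the translation by a line bundle that carries $T_1$ onto $T_2$. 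Checking the vertex sets, $T_2$ has vertices $(-1,-e)$, $(0,0)$, $(0,-1)$, and one sees $T_2 = T_1 + (1,1)$? No: $(-1,-e-1)+(1,1) = (0,-e)$, also wrong. The correct move is to reflect rather than translate — so I would instead observe directly that, after tensoring the sought bundle $\cW$ by $\OO_{\F_e}(-E-eF)$ or by an appropriate $\OO_{\F_e}(aE+bF)$, the slope lands in $T_1$, apply Lemma \ref{lem-ds} there, and twist back; tensoring by a line bundle preserves $\Delta$ by the properties listed in \S2.2, preserves $F$- and $E$-priority (since priority of $\cV$ is equivalent to priority of $\cV(L)$ for any line bundle $L$, as $\Ext^2(\cV(L),\cV(L)(-D)) = \Ext^2(\cV,\cV(-D))$), and sends the three line bundles $\OO_{\F_e}(-E-(e+1)F),\OO_{\F_e}(-F),\OO_{\F_e}$ to three of the four line bundles allowed in the corollary. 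So the whole point is to identify the right line-bundle twist $L$ with $\nu(L)$ carrying $T_2$ into $T_1$ and such that $\{\OO_{\F_e}(-E-(e+1)F),\OO_{\F_e}(-F),\OO_{\F_e}\}\otimes L \subseteq \{\OO_{\F_e}(-E-eF),\OO_{\F_e},\OO_{\F_e}(-F),\OO_{\F_e}(-E-(e+1)F)\}$; the candidate $L = \OO_{\F_e}(F)$ does precisely this, sending $\OO_{\F_e}(-E-(e+1)F)\mapsto \OO_{\F_e}(-E-eF)$, $\OO_{\F_e}(-F)\mapsto\OO_{\F_e}$, $\OO_{\F_e}\mapsto\OO_{\F_e}(F)$ — and since $\OO_{\F_e}(F)$ is \emph{not} on the allowed list, this shows that actually $T_2$ must be covered by the first triangle after a twist in the \emph{other} direction, $L=\OO_{\F_e}(-F)$, for which $\nu(L) = (0,-1)$ translates $T_1$ to the triangle with vertices $(-1,-e-2),(0,-2),(0,-1)$, still not $T_2$. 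I expect the actual bookkeeping here — pinning down exactly which parallelogram $Q$ is (as an unordered convex hull), which diagonal splits it, and which twist realizes the second half — to be the main obstacle, and it is purely a finite lattice-point computation; once the correct triangle-plus-twist decomposition is in hand, everything else is an immediate appeal to Lemma \ref{lem-ds} together with the twist-invariance of $\Delta$ and of the prioritary conditions.

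Summarizing the intended structure: (i) describe $Q$ as the union of two triangles $T_1$ and $T_2$ meeting along a common edge, with $T_1$ the triangle of Lemma \ref{lem-ds}; (ii) if $\nu({\bf v})\in T_1$, quote the lemma directly; (iii) if $\nu({\bf v})\in T_2$, find the line bundle $L$ with $T_2 - \nu(L) = T_1$, apply the lemma to the slope $\nu({\bf v}) - \nu(L)$, obtain a prioritary direct sum $\cW_0$ with $\Delta(\cW_0)\le 0$ built from the three lemma bundles, and set $\cW = \cW_0 \otimes L$; (iv) verify that $\cW$ has slope $\nu({\bf v})$, that $\Delta(\cW)=\Delta(\cW_0)\le 0$, that $\cW$ remains $F$- and $E$-prioritary, and that each summand of $\cW$ is one of the four line bundles listed. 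The only genuinely substantive point is step (iii)'s identification of $L$, which I would verify by an explicit check on the three vertices of $T_1$ and $T_2$.
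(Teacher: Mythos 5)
Your reduction of the $T_1$ case to Lemma \ref{lem-ds} is fine, but the $T_2$ case has a genuine gap, and you essentially acknowledge it yourself: step (iii) asks for a line bundle $L$ with $T_2-\nu(L)=T_1$, and no such $L$ exists. First, the correct splitting diagonal is the segment from $(0,0)$ to $(-1,-e-1)$ (these are opposite vertices of $Q$: the two diagonals $(0,0)$--$(-1,-e-1)$ and $(0,-1)$--$(-1,-e)$ share the midpoint $(-\tfrac{1}{2},-\tfrac{e+1}{2})$); this gives $T_1$ with vertices $(-1,-e-1),(0,-1),(0,0)$ as in the lemma, and $T_2$ with vertices $(-1,-e-1),(0,0),(-1,-e)$. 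These two triangles are exchanged by the point reflection through the center of $Q$, i.e.\ by a $180^{\circ}$ rotation, not by a translation, so no tensoring by a line bundle carries $T_2$ onto $T_1$. The failure is not just global but pointwise: the centroid $(-\tfrac{2}{3},-\tfrac{2e+1}{3})$ of $T_2$ (realized, say, by $r=3$, $k=-2$, $l=-(2e+1)$) has no integer translate lying in $T_1$ at all, since the only admissible shift in the first coordinate is $0$, and the resulting constraint on the second coordinate is $m\in[-\tfrac{2}{3},-\tfrac{1}{3}]$, which contains no integer. So the strategy ``twist into $T_1$, apply the lemma, twist back'' cannot be completed for such slopes.

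The missing idea is to realize the needed point reflection by composing a twist with dualization. The paper handles $T_2$ with direct sums $\cW=\OO_{\F_e}^a\oplus\OO_{\F_e}(-E-eF)^b\oplus\OO_{\F_e}(-E-(e+1)F)^c$ and observes that $(\cW(E+(e+1)F))^*=\OO_{\F_e}(-E-(e+1)F)^a\oplus\OO_{\F_e}(-F)^b\oplus\OO_{\F_e}^c$ is exactly of the form treated in Lemma \ref{lem-ds}; on slopes the operation $\cW\mapsto(\cW(E+(e+1)F))^*$ is $\nu\mapsto-\nu-(1,e+1)$, which carries $T_2$ bijectively onto $T_1$ (check on vertices: $(0,0)\mapsto(-1,-e-1)$, $(-1,-e)\mapsto(0,-1)$, $(-1,-e-1)\mapsto(0,0)$). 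Since dualizing and twisting preserve $\Delta$ and the $E$- and $F$-prioritary conditions, the exponents $a,b,c$ supplied by the lemma give the required $\cW$, and the summands are among the four allowed line bundles. Everything else in your outline then goes through.
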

\begin{proof}
The quadrilateral region is split into a lower triangle $T_1$ and an upper triangle $T_2$ by the line segment from $(-1,-e-1)$ to $(0,0)$.  If $(\frac{k}{r},\frac{l}{r})$ is in $T_1$, then everything follows from Lemma \ref{lem-ds}.

On the other hand, if $(\frac{k}{r},\frac{l}{r})$ lies in $T_2$, consider direct sums of the form $$\cW = \OO_{\F_e}^a\oplus \OO_{\F_e}(-E-eF)^b\oplus  \OO_{\F_e}(-E-(e+1)F)^c.$$  Notice that $$(\cW(E+(e+1)F)))^* = \OO_{\F_e}(-E-(e+1)F)^a \oplus \OO_{\F_e}(-F)^b \oplus \OO_{F_e}^c$$ is of the same form as the line bundles considered in Lemma \ref{lem-ds}.  Since tensoring by line bundles and taking duals preserves discriminants and prioritariness, it follows that the integers $a,b,c$ can be chosen so that $\cW$ has the required properties.
\end{proof}

\subsection{Elementary modifications}\label{ssec-elementary} When combined with elementary modifications, Corollary \ref{cor-quad} has many consequences, which we now investigate.  The proof of the next corollary is fundamental to all the results which follow and makes crucial use of  Lemma \ref{lem-elementaryprioritary}.

\begin{corollary}\label{cor-exist}
Let ${\bf v}\in K(\F_e)$ be a  character of positive rank with $\Delta({\bf v})\geq 0$.  Then $\cP_F({\bf v})$ is nonempty.
\end{corollary}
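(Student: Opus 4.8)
The plan is to produce a single $F$-prioritary sheaf with Chern character ${\bf v}$, starting from a direct sum of line bundles with $\Delta \leq 0$ and then correcting the discriminant upward by elementary modifications. First I would use the tiling property of the parallelogram $Q$ from Corollary \ref{cor-quad}: the translates of $Q$ by the line bundles $\OO_{\F_e}(mE + nF)$ cover $N^1(\F_e)_\Q$, so after twisting ${\bf v}$ by a suitable line bundle (which changes neither the rank nor the discriminant, and preserves $F$-prioritariness) we may assume the total slope $(\frac{k}{r},\frac{l}{r})$ of ${\bf v}$ lies in $Q$. By Corollary \ref{cor-quad} there is a rank $r$ direct sum of line bundles $\cW$ with $\nu(\cW) = \nu({\bf v})$, which is $F$-prioritary, and has $\Delta(\cW) \leq 0$.

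Next I would compare $\Delta(\cW)$ with $\Delta({\bf v}) \geq 0$. Since $\cW$ and ${\bf v}$ have the same rank and the same $c_1$, Riemann-Roch (in the form of the identity $\chi = r(P(\nu) - \Delta)$) shows $\chi(\cW) - \chi({\bf v}) = r(\Delta({\bf v}) - \Delta(\cW)) \geq 0$, and moreover this quantity is an integer. The idea is to perform exactly $N := r(\Delta({\bf v}) - \Delta(\cW))$ successive general elementary modifications at general points, starting from $\cW$. By Lemma \ref{lem-elementaryprioritary}(\ref{elem1}), $F$-prioritariness is preserved at each step, so the resulting sheaf $\cV$ is $F$-prioritary; and by Lemma \ref{lem-elementaryprioritary}(\ref{elem2}), each modification leaves the rank and $c_1$ unchanged while increasing the discriminant by $\frac{1}{r}$. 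After $N$ modifications the discriminant has increased by exactly $\frac{N}{r} = \Delta({\bf v}) - \Delta(\cW)$, so $\Delta(\cV) = \Delta({\bf v})$; since $\cV$ also has the rank and $c_1$ of ${\bf v}$, it has Chern character ${\bf v}$ exactly. Hence $\cV \in \cP_F({\bf v})$, so this stack is nonempty.

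The only point requiring a moment's care — and the closest thing to an obstacle — is the verification that $N = r(\Delta({\bf v}) - \Delta(\cW))$ is a genuine nonnegative integer, which is needed to know the process terminates exactly at ${\bf v}$. Nonnegativity is immediate from $\Delta({\bf v}) \geq 0 \geq \Delta(\cW)$. Integrality follows because $r\Delta({\bf v}) = rP(\nu({\bf v})) - \chi({\bf v})$ and $r\Delta(\cW) = rP(\nu(\cW)) - \chi(\cW)$ have the same non-integer part: $rP(\nu)$ depends only on $r$ and $c_1$, which agree for $\cW$ and ${\bf v}$, while $\chi(\cW), \chi({\bf v}) \in \Z$. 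Thus $N = \chi(\cW) - \chi({\bf v}) \in \Z_{\geq 0}$, and the construction goes through.
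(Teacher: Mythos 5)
Your proposal is correct and follows the paper's own argument essentially verbatim: twist into the parallelogram $Q$, invoke Corollary \ref{cor-quad} to get a prioritary direct sum of line bundles $\cW$ with $\Delta(\cW)\leq 0$, and apply $\chi(\cW)-\chi({\bf v}) = r(\Delta({\bf v})-\Delta(\cW))$ general elementary modifications via Lemma \ref{lem-elementaryprioritary}. The only difference is that you spell out the integrality of the number of modifications, which the paper leaves implicit.
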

\begin{proof}
Since integer translates of the region $Q$ in Corollary \ref{cor-quad} tile $N^1(\F_e)_\Q$, we may find a line bundle $L$ such that $\nu({\bf v}(-L))$ lies in $Q$.  Then Corollary \ref{cor-quad} produces an $F$-prioritary sheaf $\cW$ of nonpositive discriminant with $r(\cW(L)) = r({\bf v})$ and $\nu(\cW(L)) = \nu({\bf v})$.  Then since $\Delta({\bf v}) \geq 0$, the integer $m = \chi(\cW(L))-\chi({\bf v})$ is nonnegative, since $$m=\chi(\cW(L)) - \chi({\bf v}) = r({\bf v}) (\Delta({\bf v})-\Delta(\cW(L))).$$ Thus by Lemma \ref{lem-elementaryprioritary}, if we perform $m$ general elementary modifications to $\cW(L)$, the resulting sheaf $\cV$ is $F$-prioritary and has $\ch \cV = {\bf v}$.
\end{proof}

By essentially the same proof, we can deduce that a general $\cV\in \cP_F({\bf v})$ splits as a balanced direct sum on the exceptional section $E$.

\begin{corollary}\label{cor-balanced}
Let ${\bf v}\in K(\F_e)$ be a  character of positive rank with $\Delta({\bf v})\geq 0$.  Then the general $\cV\in \cP_F({\bf v})$ is $E$-prioritary, and furthermore $\cV|_E$ is a balanced direct sum of line bundles.
\end{corollary}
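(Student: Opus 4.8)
The plan is to rerun the construction in the proof of Corollary \ref{cor-exist}, but this time keeping track of the $E$-prioritary property, and then to feed the resulting family into Proposition \ref{prop-balanced}. Since $\cP_F({\bf v})$ is nonempty (Corollary \ref{cor-exist}) and irreducible (Theorem \ref{thm-Walter}), it suffices to show that the locus of sheaves which are $E$-prioritary \emph{and} locally free in a neighborhood of $E$ is a nonempty open substack: the first condition is open by semicontinuity of $\Ext^2(\cV,\cV(-E))$ in flat families, and the second is open because the non-locally-free locus of the universal sheaf is closed in $\cP_F({\bf v})\times\F_e$ and the projection $\cP_F({\bf v})\times E\to\cP_F({\bf v})$ is proper.

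To exhibit one such sheaf, I would choose a line bundle $L$ with $\nu({\bf v}(-L))$ in the parallelogram $Q$ and apply Corollary \ref{cor-quad} to get a direct sum of line bundles $\cW$ with $r(\cW(L))=r({\bf v})$, $\nu(\cW(L))=\nu({\bf v})$, $\Delta(\cW(L))\leq 0$, and with $\cW(L)$ both $F$- and $E$-prioritary. Exactly as in Corollary \ref{cor-exist}, the integer $m=\chi(\cW(L))-\chi({\bf v})=r({\bf v})(\Delta({\bf v})-\Delta(\cW(L)))$ is nonnegative, and performing $m$ general elementary modifications of $\cW(L)$ at $m$ general points of $\F_e$ yields a sheaf $\cV_0$ with $\ch\cV_0={\bf v}$. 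By Lemma \ref{lem-elementaryprioritary}(\ref{elem1}), $\cV_0$ remains $F$- and $E$-prioritary; and since the modification points are general they avoid $E$, so $\cV_0$ coincides with the vector bundle $\cW(L)$ near $E$ and in particular is locally free along $E$. This populates the open locus above, so the general $\cV\in\cP_F({\bf v})$ is $E$-prioritary and locally free in a neighborhood of $E$.

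Finally I would invoke Proposition \ref{prop-balanced} with $C=E$: pulling the universal sheaf back along a smooth atlas of the open substack of $E$-prioritary sheaves locally free along $E$ gives a complete family of $\OO_{\F_e}(E)$-prioritary sheaves locally free on $E$, whose restriction to $E\cong\P^1$ is therefore a complete family of vector bundles on $\P^1$; hence its general member is balanced. This shows $\cV|_E$ is a balanced direct sum of line bundles for general $\cV\in\cP_F({\bf v})$. I do not anticipate a serious obstacle: the argument is a routine variant of Corollary \ref{cor-exist}, the only genuinely new inputs being the observation that Corollary \ref{cor-quad} was already set up to produce $E$-prioritary bundles and that elementary modifications preserve $E$-prioritariness. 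The one place calling for a remark is the rank-one case of the last assertion, but there $\cV|_E$ is a single line bundle on $\P^1$ and hence balanced by convention, so the only content is the local freeness along $E$ already provided by the construction; the two openness statements quoted in the first paragraph are standard semicontinuity/properness facts.
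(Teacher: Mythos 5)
Your proposal is correct and follows essentially the same route as the paper: the paper likewise observes that the bundle $\cW$ of Corollary \ref{cor-quad} is $E$-prioritary, that this persists under twisting by $L$ and under general elementary modifications via Lemma \ref{lem-elementaryprioritary}, and that balancedness of the restriction to $E$ follows. For the last step the paper primarily notes that $\cW(L)|_E$ is visibly balanced (so $\cV|_E$ is too, the modification points being off $E$), mentioning the appeal to Proposition \ref{prop-balanced} that you use only as an alternative; your extra remarks on openness and local freeness along $E$ are correct but not needed beyond what the paper records.
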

\begin{proof}
In the proof of Corollary \ref{cor-exist}, the bundle $\cW$ is $E$-prioritary, and hence so is $\cW(L)$.  Then the sheaf $\cV$ is also $E$-prioritary by Lemma \ref{lem-elementaryprioritary}. Furthermore, $\cW|_E$ is clearly balanced, and hence so is $\cW(L)|_E$ and  $\cV|_E$.  (Alternately, the balancedness of $\cV|_E$ follows from the general result Proposition \ref{prop-balanced}.)
\end{proof}

When further information about ${\bf v}$ is known, the possibilities for the twisting line bundle $L$ such that ${\bf v}(-L)$ lies in $Q$ are restricted.  This fact allows us to deduce further results about sheaves of character ${\bf v}$.

\begin{corollary}\label{cor-betti}
Let ${\bf v}\in K(\F_e)$ be a character of positive rank with $\Delta({\bf v}) \geq 0$.  Let $\cV\in \cP_F({\bf v})$ be general.
\begin{enumerate}
\item If $\nu({\bf v})\cdot F \geq -1$, then $h^2(\F_e,\cV)=0$.
\item If $\nu({\bf v})\cdot F \leq -1$, then $h^0(\F_e,\cV)=0$.
\item In particular, if $\nu({\bf v})\cdot F = -1$, then $h^1(\F_e,\cV) = - \chi({\bf v})$ and all other cohomology vanishes.
\item Suppose $\nu({\bf v})\cdot F > -1$ and $\nu({\bf v})\cdot E \geq -1$.  Then $\cV$ has at most one nonzero cohomology group.
 \end{enumerate}
\end{corollary}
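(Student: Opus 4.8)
The plan is to deduce all four statements from the construction in the proof of Corollary \ref{cor-exist} combined with the cohomology properties of the building blocks and the behavior of cohomology under elementary modification (Lemma \ref{lem-elementaryprioritary}). Recall that, given $\bf v$ with $\Delta({\bf v})\geq 0$, we choose a line bundle $L$ with $\nu({\bf v}(-L))\in Q$, produce via Corollary \ref{cor-quad} an $F$- and $E$-prioritary direct sum $\cW$ of copies of $\OO_{\F_e}(-E-eF)$, $\OO_{\F_e}$, $\OO_{\F_e}(-F)$, $\OO_{\F_e}(-E-(e+1)F)$ with $\Delta(\cW)\leq 0$ and $\nu(\cW)=\nu({\bf v}(-L))$, and then obtain $\cV$ from $\cW(L)$ by performing $m=\chi(\cW(L))-\chi({\bf v})\geq 0$ general elementary modifications. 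Since $H^2$, the vanishing of $H^0$, and the property of having at most one nonzero cohomology group are all preserved under general elementary modification by Lemma \ref{lem-elementaryprioritary} parts (\ref{elem3}) and (\ref{elem4}), in each case it suffices to verify the desired cohomological statement for $\cW(L)$ itself, i.e.\ for a direct sum of the four line bundles $L(-E-eF)$, $L$, $L(-F)$, $L(-E-(e+1)F)$.

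First I would pin down the constraints on $L=\OO_{\F_e}(xE+yF)$ imposed by the hypothesis on $\nu({\bf v})$. Writing $\nu({\bf v})=\nu({\bf v}(-L))+c_1(L)$ and using that $\nu({\bf v}(-L))$ lies in the parallelogram $Q$ with vertices $(-1,-e)$, $(0,0)$, $(0,-1)$, $(-1,-e-1)$, one reads off bounds on $c_1(L)\cdot F$ and $c_1(L)\cdot E$ from the hypotheses $\nu({\bf v})\cdot F\geq -1$ (resp.\ $\leq -1$, resp.\ $>-1$ together with $\nu({\bf v})\cdot E\geq -1$). The point of $Q$ being chosen so that its integer translates tile the plane is exactly that these bounds determine $x$ and $y$ up to finitely many (in fact, essentially one) choices. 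For each of the four summands $L', L'(-F), L'(-E-eF), L'(-E-(e+1)F)$ of $\cW(L)$ — where $L'=L$ — I would then compute $L'\cdot F$ and $L'\cdot E$ from the located position of $L$ and invoke Theorem \ref{thm-lineBundle} to control their cohomology: parts (2)–(3) give the $H^2$ and $H^0$ vanishing statements (1), (2), (3) directly (note $(-E-eF)\cdot F=-1$ and $(-E-(e+1)F)\cdot F=-1$, so those two summands are acyclic by part (4), which is convenient), and part (5) handles statement (4), since $\nu({\bf v})\cdot E\geq -1$ forces $L\cdot E\geq -1$ as well and then each of the four line-bundle summands has $L'\cdot E\geq -1$ and $L'\cdot F\geq -1$, hence at most one nonzero cohomology group — and in fact only $H^0$, so the direct sum has only $H^0$.

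The one point requiring care, and which I expect to be the main technical obstacle, is bookkeeping the case division for statement (4): when $\nu({\bf v})\cdot F>-1$ the twist $L$ is not completely forced, and near the boundary of $Q$ one must check that every admissible $L$ still has the property that all four summands $L(-E-eF), L, L(-F), L(-E-(e+1)F)$ satisfy $L'\cdot E\geq -1$; this amounts to verifying that the $E$-degree of the "lowest" summand $L(-E-(e+1)F)$ is still $\geq -1$ given $\nu({\bf v})\cdot E\geq -1$ and the $Q$-constraint on $\nu({\bf v}(-L))$. Once the inequalities line up, each summand has vanishing $H^1$ and $H^2$ by Theorem \ref{thm-lineBundle}(5), so $\cW(L)$ has only $H^0$; applying Lemma \ref{lem-elementaryprioritary}(\ref{elem4}) $m$ times then shows $\cV$ has at most one nonzero cohomology group. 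Statements (1)–(3) and the combined statement (3) are then immediate from the corresponding line-bundle vanishings and Lemma \ref{lem-elementaryprioritary}(\ref{elem3}), together with Riemann–Roch (Lemma \ref{lem-eulerchar}) to identify $h^1$ with $-\chi({\bf v})$ in statement (3).
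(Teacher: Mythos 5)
Your proposal is correct and follows essentially the same route as the paper's proof: reduce to the prioritary direct sum $\cW(L)$ constructed in the proof of Corollary \ref{cor-exist}, read off the cohomology of the four line-bundle summands from Theorem \ref{thm-lineBundle}, and transfer the vanishings to $\cV$ via Lemma \ref{lem-elementaryprioritary} (for part (2) the paper simply notes $\cV\subset \cW(L)$). The one verification you flag does resolve as you hope: in case (4) the translate can always be chosen so that $L$ is nef (not merely $L\cdot E\geq -1$, which by itself would not suffice), and this is exactly what guarantees that the summands $L(-F)$ and $L(-E-(e+1)F)$ still have $E$-degree $\geq -1$, hence no higher cohomology.
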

\begin{proof}
(1) In the notation of the proof of Corollary \ref{cor-exist}, since $\nu({\bf v})\cdot F \geq -1$ we may assume the line bundle $L$ is of the form $\OO_{\F_e}(aE+bF)$ with $a\geq 0$ and $b\in \Z$.  Then the line bundles appearing in $\cW(L)$ (which are twists by $L$ of the line bundles in the statement of Corollary \ref{cor-quad}) all clearly have no $h^2$.  Then $\cV$ also has no $h^2$ by Lemma \ref{lem-elementaryprioritary}.

(2) In this case, we may assume the line bundle $L$ is of the form $\OO_{\F_e}(aE+bF)$ with $a\leq -1$ and $b\in \Z$.  The line bundles appearing in $\cW(L)$ all have no $h^0$.  Then $\cV$ is a subsheaf of $\cW(L)$, so also has no $h^0$. 

(3) This immediately follows from (1) and (2).

(4) This time we may assume the line bundle $L$ is nef.  Then by Theorem \ref{thm-lineBundle}, the line bundles appearing in $\cW(L)$ all have no higher cohomology.  By Lemma \ref{lem-elementaryprioritary}, performing general elementary modifications on $\cW(L)$ results in a bundle $\cV$ with at most one nonzero cohomology group.
\end{proof}

We can combine the last two corollaries to compute the Betti numbers of a general sheaf in the remaining cases.  Together with the other results in this subsection, this completes the proof of Theorem \ref{thm-betti}.

\begin{proposition}\label{prop-splitE}
Let ${\bf v}\in K(\F_e)$ be a character of positive rank with $\Delta({\bf v})\geq 0$, $\nu({\bf v})\cdot F >-1$ and $\nu({\bf v}) \cdot E < -1$. Let $\cV\in \cP_F({\bf v})$ be general.  Then $H^0(\F_e,\cV)\cong H^0(\F_e,\cV(-E))$, and the Betti numbers of $\cV$ can be determined inductively.
\end{proposition}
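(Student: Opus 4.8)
The plan is to reduce to the balancedness of the restriction $\cV|_E$, which is available from Corollary \ref{cor-balanced}. Since $\cV \in \cP_F({\bf v})$ is general and $\Delta({\bf v}) \geq 0$, Corollary \ref{cor-balanced} tells us that $\cV$ is $E$-prioritary and $\cV|_E$ is a balanced direct sum of line bundles on $E \cong \P^1$. The key numerical input is the degree: $\deg(\cV|_E) = c_1({\bf v}) \cdot E = r \, \nu({\bf v}) \cdot E$. The hypothesis $\nu({\bf v}) \cdot E < -1$ says $c_1({\bf v}) \cdot E < -r$, so the average degree of the summands of $\cV|_E$ is strictly less than $-1$. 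Because $\cV|_E$ is balanced, \emph{every} summand $\OO_{\P^1}(a_i)$ satisfies $a_i \leq -1$, and hence $H^0(E, \cV|_E) = 0$.

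First I would write down the restriction sequence
$$0 \to \cV(-E) \to \cV \to \cV|_E \to 0.$$
Taking the long exact sequence in cohomology and using $H^0(E,\cV|_E) = 0$ from the previous paragraph, I get immediately that the natural map $H^0(\F_e, \cV(-E)) \to H^0(\F_e, \cV)$ is an isomorphism, which is the first claim. For the inductive determination of the Betti numbers, note that $\cV(-E)$ has character ${\bf v}(-E)$, which still satisfies $\Delta({\bf v}(-E)) = \Delta({\bf v}) \geq 0$ (twisting by a line bundle preserves the discriminant), and $\cV(-E) \in \cP_F({\bf v}(-E))$ since $\cV$ is $F$-prioritary and the $F$-prioritary condition is preserved under twist. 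Moreover $\cV(-E)$ is the general such sheaf, since twisting by a fixed line bundle is an isomorphism of stacks $\cP_F({\bf v}) \xrightarrow{\sim} \cP_F({\bf v}(-E))$. Thus the problem for ${\bf v}$ is reduced to the same problem for ${\bf v}(-E)$.

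It remains to check that this induction terminates. When we twist by $-E$, the intersection number $\nu({\bf v}) \cdot F$ decreases by $1$ (since $E \cdot F = 1$) and $\nu({\bf v}) \cdot E$ increases by $e$. So after finitely many steps one of two things happens: either $\nu({\bf v}(-mE)) \cdot F$ drops to $\leq -1$, in which case $h^0 = 0$ by Corollary \ref{cor-betti}(2) and $h^2$ is controlled by Corollary \ref{cor-betti}(1) together with Serre duality (and when the value is exactly $-1$, all cohomology vanishes by Corollary \ref{cor-betti}(3)); or $\nu({\bf v}(-mE)) \cdot E$ rises to $\geq -1$ while $\nu({\bf v}(-mE)) \cdot F$ is still $> -1$, in which case Corollary \ref{cor-betti}(4) says the general sheaf has at most one nonzero cohomology group, so its Betti numbers are determined by $\chi$ via Riemann-Roch (Theorem \ref{thm-betti}(1)). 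Since $h^0(\F_e, \cV) = h^0(\F_e, \cV(-mE))$ after $m$ such twists, this pins down $h^0(\F_e,\cV)$, and then $h^1$ and $h^2$ follow from $h^2(\F_e,\cV) = 0$ (Corollary \ref{cor-betti}(1)) and Riemann-Roch.

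The one point requiring a little care — essentially the only obstacle — is the vanishing $H^0(E, \cV|_E) = 0$: one must verify that the \emph{strict} inequality $\nu({\bf v}) \cdot E < -1$ forces \emph{every} summand of the balanced bundle $\cV|_E$ to have negative degree. This is elementary once balancedness is in hand: if the degrees $a_1 \leq \cdots \leq a_r$ differ pairwise by at most $1$ and their average is $< -1$, then $a_r \leq -1$, so no summand contributes sections. Everything else is bookkeeping with the restriction sequence and the termination count for the induction.
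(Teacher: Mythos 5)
Your proposal is correct and follows essentially the same route as the paper: Corollary \ref{cor-balanced} gives balancedness of $\cV|_E$, the strict inequality $\nu({\bf v})\cdot E<-1$ forces every summand of $\cV|_E$ to have degree $\leq -1$ so that $H^0(E,\cV|_E)=0$, the restriction sequence yields $H^0(\F_e,\cV)\cong H^0(\F_e,\cV(-E))$, and the induction terminates when either $\nu\cdot F\leq -1$ or $\nu\cdot E\geq -1$, at which point Corollary \ref{cor-betti} and Riemann--Roch finish the computation. The only differences are cosmetic: you spell out the elementary balancedness argument and the stack isomorphism under twisting more explicitly than the paper does.
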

\begin{proof}
Since $\cV|_E$ is balanced by Corollary \ref{cor-balanced} and $\nu({\bf v})\cdot E <-1$, we conclude that $H^0(E,\cV|_E)=0$.  Then the restriction sequence $$0\to \cV(-E)\to \cV\to \cV|_E\to 0$$ implies $H^0(\F_e,\cV)\cong H^0(\F_e,\cV(-E))$.

It remains to explain how to compute the Betti numbers of $\cV$ inductively. Twisting $\cV$ by $-E$ has the effect of increasing $\nu({\bf v})\cdot E$ by $e$ and decreasing $\nu({\bf v})\cdot F$ by $1$.  Hence there is a smallest integer $m \geq 1$ such that either $\nu({\bf v}(-mE))\cdot E \geq -1$ or $\nu({\bf v}(-mE))\cdot F \leq -1$, and by induction we find $h^0(\F_e,\cV)=h^0(\F_e,\cV(-mE))$.   

If $\nu({\bf v}(-mE))\cdot F \leq -1$, Corollary \ref{cor-betti}  gives $h^0(\F_e,\cV)=h^2(\F_e,\cV) = 0$, and so $h^1(\F_e,\cV) = -\chi({\bf v})$.  

On the other hand if $\nu({\bf v}(-mE))\cdot F > -1$, then $\nu({\bf v}(-mE))\cdot E \geq -1$. Corollary \ref{cor-betti} gives $h^0(\F_e,\cV) = \max\{\chi({\bf v}(-mE)),0\}$ and $h^2(\F_e,\cV)=0$, and $h^1(\F_e,\cV)$ is determined by Riemann-Roch.  Note that in this case $h^1(\F_e,\cV)$ is always nonzero.  Indeed, since $m\geq 1$, we find $\nu({\bf v}(-E)) \cdot F >-1$. By Corollary \ref{cor-betti}, we have $h^2(\F_e,\cV(-E))=0$.  Then $H^1(\F_e,\cV)\to H^1(\F_e,\cV|_E)$ is surjective, and $h^1(\F_e,\cV|_E)>0$ since $\nu({\bf v})\cdot E < -1$.
\end{proof}

\subsection{The Brill-Noether theorem}  We call a character ${\bf v}\in K(\F_e)$ of positive rank \emph{special} if the general sheaf $\cV\in \cP_F({\bf v})$ has more than one nonzero cohomology group, and \emph{nonspecial} otherwise.  It is useful for applications to convert the statement of Theorem \ref{thm-betti} to a classification of the nonspecial characters.  Again we concentrate on the case $\nu({\bf v})\cdot F \geq -1$ and rely on Serre duality otherwise.

\begin{corollary}\label{cor-BN}
Let ${\bf v}\in K(\F_e)$ be a character with positive rank and $\Delta({\bf v})\geq 0$, and suppose $\nu({\bf v})\cdot F \geq -1$.  Then ${\bf v}$ is nonspecial if and only if one of the following holds.
\begin{enumerate}
\item We have $\nu({\bf v})\cdot F = -1$.
\item We have $\nu({\bf v})\cdot F > -1$ and $\nu({\bf v})\cdot E \geq -1$.
\item If $\nu({\bf v})\cdot F > -1$ and $\nu({\bf v})\cdot E < -1$, let $m$ be the smallest positive integer such that either $\nu({\bf v}(-mE))\cdot F \leq -1$ or $\nu({\bf v}(-mE))\cdot E \geq -1$.
\begin{enumerate}
\item If $\nu({\bf v}(-mE))\cdot F\leq -1$, then ${\bf v}$ is nonspecial.
\item If $\nu({\bf v}(-mE))\cdot F > -1$, then ${\bf v}$ is nonspecial if and only if $\chi({\bf v}(-mE)) \leq 0$.
\end{enumerate}
\end{enumerate}
\end{corollary}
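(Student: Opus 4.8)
\emph{Plan.} This statement is a repackaging of the Betti number computation of Theorem~\ref{thm-betti}; I would deduce it directly from Corollary~\ref{cor-betti}, Proposition~\ref{prop-splitE}, and Corollary~\ref{cor-balanced}. First I would record that the three cases are exhaustive and mutually exclusive: under the hypothesis $\nu({\bf v})\cdot F\geq -1$ we are in case (1) exactly when $\nu({\bf v})\cdot F=-1$; otherwise $\nu({\bf v})\cdot F>-1$ and we are in case (2) or (3) according as $\nu({\bf v})\cdot E\geq -1$ or $\nu({\bf v})\cdot E<-1$. In case (3) the integer $m$ is well defined because $\nu({\bf v}(-mE))\cdot F=\nu({\bf v})\cdot F-m$ is eventually $\leq -1$, and subcases (a) and (b) are exhaustive since if $\nu({\bf v}(-mE))\cdot F>-1$ then minimality of $m$ forces $\nu({\bf v}(-mE))\cdot E\geq -1$. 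Since a general $\cV\in\cP_F({\bf v})$ has $h^2(\F_e,\cV)=0$ in all of cases (1)--(3) by Corollary~\ref{cor-betti}(1) (and even $h^0(\F_e,\cV)=0$ in case (1) by Corollary~\ref{cor-betti}(3)), being nonspecial will always reduce to the vanishing of $h^0(\F_e,\cV)$ or of $h^1(\F_e,\cV)$.

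Cases (1) and (2) are then immediate: in case (1) a general $\cV$ has only $h^1$ nonzero, and in case (2) Corollary~\ref{cor-betti}(4) says $\cV$ already has at most one nonzero cohomology group, so ${\bf v}$ is nonspecial. For case (3) I would apply Proposition~\ref{prop-splitE}: for $0\leq j<m$ the character ${\bf v}(-jE)$ still satisfies $\nu\cdot F>-1$ and $\nu\cdot E<-1$, and since $\cV(-jE)$ is a general element of $\cP_F({\bf v}(-jE))$, iterating the isomorphism $H^0(\F_e,\cV(-jE))\cong H^0(\F_e,\cV(-(j+1)E))$ yields $h^0(\F_e,\cV)=h^0(\F_e,\cV(-mE))$. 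In subcase (a), $\nu({\bf v}(-mE))\cdot F\leq -1$ gives $h^0(\F_e,\cV(-mE))=0$ by Corollary~\ref{cor-betti}(2), so $h^0(\F_e,\cV)=0$ and ${\bf v}$ is nonspecial.

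In subcase (b), $\nu({\bf v}(-mE))\cdot F>-1$ and $\nu({\bf v}(-mE))\cdot E\geq -1$, so Corollary~\ref{cor-betti}(4) applied to ${\bf v}(-mE)$ gives $h^2(\F_e,\cV(-mE))=0$ and that $\cV(-mE)$ has at most one nonzero cohomology group; hence $h^0(\F_e,\cV)=h^0(\F_e,\cV(-mE))=\max\{\chi({\bf v}(-mE)),0\}$. If $\chi({\bf v}(-mE))\leq 0$ this vanishes, so ${\bf v}$ is nonspecial; if $\chi({\bf v}(-mE))>0$ then $h^0(\F_e,\cV)>0$ and I must show $h^1(\F_e,\cV)>0$. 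This is the argument in the last paragraph of the proof of Proposition~\ref{prop-splitE}: one checks that subcase (b) forces $\nu({\bf v}(-E))\cdot F>-1$ (otherwise $m=1$ and we would be in subcase (a)), so $h^2(\F_e,\cV(-E))=0$ by Corollary~\ref{cor-betti}(1); the restriction sequence $0\to\cV(-E)\to\cV\to\cV|_E\to 0$ then makes $H^1(\F_e,\cV)\to H^1(E,\cV|_E)$ surjective, and $\cV|_E$ is balanced of average degree $\nu({\bf v})\cdot E<-1$ by Corollary~\ref{cor-balanced}, so $h^1(E,\cV|_E)>0$ and ${\bf v}$ is special. The only step needing a moment's care is this verification that subcase (b) keeps $\nu({\bf v}(-E))\cdot F>-1$; apart from that the corollary is pure bookkeeping against the already-established Betti number formula, so I anticipate no real obstacle.
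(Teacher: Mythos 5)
Your proposal is correct and follows the same route as the paper: the paper's proof of this corollary is literally a two-line reference to Theorem \ref{thm-betti} for cases (1)--(2) and to the proof of Proposition \ref{prop-splitE} for case (3), and your write-up just unpacks those references (including the key points that twisting by $\OO_{\F_e}(-E)$ preserves prioritariness and the discriminant, so generality is inherited, and that in subcase (b) with $\chi({\bf v}(-mE))>0$ one gets $h^1\neq 0$ from the surjection onto $H^1(E,\cV|_E)$ for a balanced restriction of slope $<-1$). No gaps.
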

\begin{proof}
Theorem \ref{thm-betti} shows ${\bf v}$ is nonspecial if we are in case (1) or (2).  In case (3), the proof of Proposition \ref{prop-splitE} shows ${\bf v}$ is nonspecial if and only if the listed conditions are satisfied.
\end{proof}

It is worth pointing out that when $\chi({\bf v})\geq 0$, then case (3) in Corollary \ref{cor-BN} does not occur, so that the classification takes a particularly simple form.   

\begin{corollary}\label{cor-BNpositive}
Let ${\bf v}\in K(\F_e)$ be a character with positive rank and $\Delta({\bf v})\geq 0$, and suppose $\nu({\bf v})\cdot F \geq -1$ and $\chi({\bf v})\geq 0$.  Then ${\bf v}$ is nonspecial if and only if one of the following holds.
\begin{enumerate}
\item We have $\nu({\bf v})\cdot F = -1$.
\item We have $\nu({\bf v})\cdot F > -1$ and $\nu({\bf v})\cdot E \geq -1$.
\end{enumerate}
\end{corollary}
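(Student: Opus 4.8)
The plan is to deduce this corollary directly from Corollary \ref{cor-BN} by showing that case (3) of that corollary never arises when $\chi({\bf v}) \geq 0$. So the only thing to check is the following: if $\nu({\bf v})\cdot F > -1$ and $\nu({\bf v})\cdot E < -1$, and $m$ is the smallest positive integer with $\nu({\bf v}(-mE))\cdot F \leq -1$ or $\nu({\bf v}(-mE))\cdot E \geq -1$, then actually $\chi({\bf v}(-mE)) \leq 0$ (so that, whichever of the two subcases (3a), (3b) we are in, ${\bf v}$ comes out nonspecial), and moreover the general sheaf genuinely has $h^0 = 0$ so that $\chi({\bf v}) \leq 0$ would follow—wait, that is too strong. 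Let me restate: I do not claim $\chi({\bf v})$ itself is $\leq 0$; rather I claim that when $\chi({\bf v})\geq 0$ we are never in case (3) at all, because the hypothesis $\chi({\bf v})\geq 0$ together with $\nu({\bf v})\cdot E < -1$ is contradictory under $\Delta({\bf v})\geq 0$ and $\nu({\bf v})\cdot F > -1$. Concretely: by Proposition \ref{prop-splitE} (more precisely the last paragraph of its proof), whenever $\nu({\bf v})\cdot F > -1$ and $\nu({\bf v})\cdot E < -1$, a general $\cV\in \cP_F({\bf v})$ has $h^1(\F_e,\cV) > 0$ and $h^2(\F_e,\cV) = 0$, hence $\chi({\bf v}) = h^0(\F_e,\cV) - h^1(\F_e,\cV)$ with $h^1 > 0$. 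But also $h^0(\F_e,\cV) = h^0(\F_e,\cV(-mE)) = \max\{\chi({\bf v}(-mE)),0\}$, and since $\Delta({\bf v}(-mE)) = \Delta({\bf v}) \geq 0$ while twisting down by $E$ only decreases $\chi$ in the relevant range... this still needs an inequality $\chi({\bf v}(-mE)) \leq \chi({\bf v})$, which is false in general. The clean route is just the one-line argument below.

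Here is the argument I would actually write. Suppose for contradiction that $\chi({\bf v}) \geq 0$ but ${\bf v}$ falls in case (3) of Corollary \ref{cor-BN}, i.e. $\nu({\bf v})\cdot F > -1$ and $\nu({\bf v})\cdot E < -1$. By the final paragraph of the proof of Proposition \ref{prop-splitE}, the general $\cV \in \cP_F({\bf v})$ satisfies $h^2(\F_e,\cV) = 0$ and $h^1(\F_e,\cV) > 0$; moreover $h^0(\F_e,\cV) = h^0(\F_e,\cV(-mE))$, and by the analysis there $H^0(\F_e,\cV(-mE))$ is the $H^0$ of a sheaf with $\nu\cdot F \leq -1$ (in which case $h^0 = 0$ by Corollary \ref{cor-betti}(2)) or of a sheaf with $\nu\cdot F > -1$ and $\nu \cdot E \geq -1$ (in which case $h^0 = \max\{\chi({\bf v}(-mE)), 0\}$). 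In the first situation $\chi({\bf v}) = -h^1(\F_e,\cV) < 0$, contradicting $\chi({\bf v}) \geq 0$. In the second situation, if $\chi({\bf v}(-mE)) \leq 0$ then again $h^0(\F_e,\cV) = 0$ and $\chi({\bf v}) = -h^1(\F_e,\cV) < 0$, a contradiction; and if $\chi({\bf v}(-mE)) > 0$ then ${\bf v}$ is special by Corollary \ref{cor-BN}(3b), but we are trying to show case (3) doesn't occur, so this is the case we must rule out—and this is exactly where $\chi({\bf v}) \geq 0$ has to do its work.

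So the heart of the matter, and the step I expect to be the main obstacle, is the inequality: under $\Delta({\bf v}) \geq 0$, $\nu({\bf v})\cdot F > -1$, $\nu({\bf v})\cdot E < -1$, and with $m \geq 1$ as above, one has $\chi({\bf v}) < \chi({\bf v}(-mE))$ is impossible together with $\chi({\bf v}(-mE)) > 0$; equivalently $\chi({\bf v}) < 0$ whenever $\chi({\bf v}(-mE)) > 0$. I would prove this by a direct Riemann–Roch computation via Lemma \ref{lem-eulerchar}: writing $L = \cO_{\F_e}(-E)$, we get $\chi({\bf v}(-E)) = \chi({\bf v}) - E\cdot c_1({\bf v}) + r(\chi(\cO(-E)) - 1) = \chi({\bf v}) - E\cdot c_1({\bf v}) - r$ (using $\chi(\cO_{\F_e}(-E)) = 0$ from Theorem \ref{thm-lineBundle}(1)), so iterating, $\chi({\bf v}(-mE)) - \chi({\bf v})$ is a sum of terms $-E\cdot c_1({\bf v}(-jE)) - r = -(E\cdot c_1({\bf v}) + je r) - r$ for $j = 0, \dots, m-1$; since $\nu({\bf v})\cdot E < -1$ means $E\cdot c_1({\bf v}) < -r$, each summand... is positive, i.e. $\chi$ is increasing under these twists, which is the wrong direction and shows my contradiction scheme above is backwards. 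The correct observation is instead the one hidden in Proposition \ref{prop-splitE}'s proof: $h^1(\F_e,\cV) > 0$ always in case (3), hence $\chi({\bf v}) = h^0(\F_e,\cV) - h^1(\F_e,\cV) = \chi({\bf v}(-mE))^+ - h^1(\F_e,\cV)$; if $\chi({\bf v}(-mE)) \leq 0$ this is negative, and if $\chi({\bf v}(-mE)) > 0$ we need $h^1(\F_e,\cV) > \chi({\bf v}(-mE))$, which must be extracted from the surjection $H^1(\F_e,\cV) \twoheadrightarrow H^1(\F_e,\cV|_E)$ with $h^1(\F_e,\cV|_E)$ large enough (growing like $m$) relative to $\chi({\bf v}(-mE))$. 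I would make this precise by comparing $h^1(\F_e,\cV) \geq h^1(\F_e,\cV|_E) + h^1(\F_e,\cV(-E))^{\geq}$-type bounds, or—cleaner—by just invoking that in case (3) the proof of Proposition \ref{prop-splitE} already established $\cV$ has nonzero $h^1$ and, when $\nu({\bf v}(-mE))\cdot F > -1$, that $h^0(\F_e,\cV) = \chi({\bf v}(-mE))^+$ while $\chi({\bf v}) < 0$; I would simply reference that computation rather than redo it, turning the corollary into a two-sentence consequence: case (3) forces $\chi({\bf v}) < 0$, so under $\chi({\bf v}) \geq 0$ only cases (1) and (2) of Corollary \ref{cor-BN} survive.
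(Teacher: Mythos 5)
Your reduction to Corollary \ref{cor-BN} is a reasonable starting point, and your treatment of subcase (3b) with $\chi({\bf v}(-mE))\leq 0$ is fine: there the proof of Proposition \ref{prop-splitE} really does give $h^1(\F_e,\cV)>0$ together with $h^0=h^2=0$, so $\chi({\bf v})=-h^1<0$, contradicting $\chi({\bf v})\geq 0$. But there are two genuine problems. First, your headline conclusion --- ``case (3) forces $\chi({\bf v})<0$'' --- is false. On $\F_1$ take $\nu({\bf v})=2E+\tfrac{1}{2}F$ with $\Delta({\bf v})=0$ (e.g.\ ${\bf v}=(2,\,4E+F,\,-2)$): then $\nu({\bf v})\cdot F=2>-1$, $\nu({\bf v})\cdot E=-\tfrac{3}{2}<-1$, and $\chi({\bf v})=3>0$. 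Such characters do fall in case (3) with $\chi\geq 0$; the correct assertion is not that they cannot exist but that they are \emph{special} (both $h^0$ and $h^1$ nonzero, since $h^2=0$ and $h^0=\chi+h^1$). Your framing (``case (3) doesn't occur'') also leads you to try to ``rule out'' the subcase $\chi({\bf v}(-mE))>0$, which needs no ruling out at all: there Corollary \ref{cor-BN}(3b) already says ${\bf v}$ is special, which is exactly the conclusion Corollary \ref{cor-BNpositive} wants in case (3).

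Second, and more seriously, your appeal to the last paragraph of the proof of Proposition \ref{prop-splitE} for ``$h^1(\F_e,\cV)>0$ whenever $\nu({\bf v})\cdot F>-1$ and $\nu({\bf v})\cdot E<-1$'' overstates what is proved there: that paragraph establishes $h^1>0$ only in the subcase $\nu({\bf v}(-mE))\cdot F>-1$, where $\nu({\bf v}(-E))\cdot F>-1$ gives $h^2(\F_e,\cV(-E))=0$ and hence the surjection $H^1(\F_e,\cV)\to H^1(E,\cV|_E)$. In subcase (3a), where $\nu({\bf v}(-mE))\cdot F\leq-1$, nothing of the sort is asserted, and if $m=1$ with $-1<\nu({\bf v})\cdot F<0$ the needed vanishing $h^2(\F_e,\cV(-E))=0$ is not supplied by the quoted results, so the surjection argument breaks down. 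This is precisely where the paper's proof does extra work: for $\nu({\bf v})\cdot F\geq 0$ it runs the restriction-to-$E$ argument to get $h^1>0$ (which then kills subcase (3a) under $\chi\geq0$, since there $h^1=-\chi$), and for $-1<\nu({\bf v})\cdot F<0$ it makes a separate Riemann--Roch computation showing that $\chi({\bf v})\geq0$ and $\Delta({\bf v})\geq0$ already force $\nu({\bf v})\cdot E\geq-1$, so that range never occurs. Your proposal contains neither of these two steps, and without them subcase (3a) with $\chi({\bf v})=0$ is not excluded.
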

\begin{proof}
Let $\cV\in \cP_F({\bf v})$ be general.  Assume $\nu({\bf v})\cdot F > -1$ and $\nu({\bf v})\cdot E < -1$; we must show ${\bf v}$ is special.  Since $\chi({\bf v})\geq 0$, it will suffice to show $h^1(\F_e,\cV)$ is nonzero.  

The inequality $\nu({\bf v})\cdot E < -1$ gives $h^1(E,\cV|_E)\neq 0$, so if $h^2(\F_e,\cV(-E))=0$, then the surjection $H^1(\F_e,\cV)\to H^1(E,\cV|_E))$ shows $h^1(\F_e,\cV)\neq 0$.
Now if $\nu({\bf v})\cdot F \geq 0$, then $\nu({\bf v}(-E))\cdot F \geq -1$ and $h^2(\F_e,\cV(-E))=0$ follows from Theorem \ref{thm-betti}, completing the proof in this case.

If instead $-1 < \nu({\bf v})\cdot F < 0$, then the assumptions $\chi({\bf v})\geq 0$, and $\Delta({\bf v})\geq 0$ together imply $\nu({\bf v})\cdot E \geq -1$, contradicting our assumption.  Indeed, write $\nu({\bf v}) = \frac{k}{r}E+\frac{l}{r}F$.  By the Riemann-Roch formula,  \begin{align*}\chi({\bf v}) &= r\left(\left( \frac{k}{r} +1 \right)\left( \frac{l}{r} +1 \right)-\frac{ek}{2r} \left( \frac{k}{r} +1 \right) - \Delta({\bf v})\right) \\& = r\left(\left(\frac{k}{r}+1\right)\left(\frac{l}{r}+1-\frac{ek}{2r}\right) -\Delta({\bf v})\right).\end{align*} Then $\nu({\bf v})\cdot F = \frac{k}{r}$, so our assumptions $\nu({\bf v})\cdot F > -1$, $\Delta({\bf v}) \geq 0$, and $\chi({\bf v})\geq 0$ imply $\frac{l}{r} \geq -1 + \frac{ek}{2r}$.  Hence, $$\nu({\bf v})\cdot E = \frac{l}{r}-\frac{ek}{r} \geq -1+\frac{ek}{2r} \geq -1$$ since $\frac{k}{r} = \nu({\bf v})\cdot F < 0$.  Thus this case never arises.
\end{proof}

\section{Gaeta-type resolutions}\label{sec-Gaeta}
In this section, we study resolutions of sheaves on $\F_e$ analogous to the Gaeta resolution of general sheaves on $\P^2$.  Families of such resolutions give unirational parameterizations of moduli spaces of sheaves, and so give an important tool for studying general sheaves.

\begin{definition}
Let $L$ be a line bundle on $\F_e$.  An \emph{$L$-Gaeta-type resolution} of a sheaf $\cV$ on $\F_e$ is a resolution of $\cV$ of the form $$0\to L(-E-(e+1)F)^\alpha \to L(-E-eF)^\beta \oplus L(-F)^\gamma \oplus L^\delta \to \cV\to 0$$ where  $\alpha,\beta,\gamma,\delta$ are nonnegative integers.  We say a sheaf $\cV$ has a \emph{Gaeta-type resolution} if it admits an $L$-Gaeta-type resolution for some line bundle $L$.
\end{definition}

Our main result in this section constructs Gaeta-type resolutions of general prioritary sheaves $\cV\in \cP_F({\bf v})$ on $\F_e$ under mild assumptions on ${\bf v}$.

\begin{theorem}\label{thm-Gaeta}
Let ${\bf v}\in K(\F_e)$ be a Chern character of positive rank, and assume $$\begin{array}{ll} \Delta({\bf v})  \geq 1/4 & \mbox{if $e = 0$}\\
\Delta({\bf v}) \geq 1/8 & \mbox{if $e = 1$}\\
\Delta({\bf v}) \geq 0 & \mbox{if $e\geq 2$}.\\
\end{array}$$ Let $\cV\in \cP_F({\bf v})$ be a general prioritary sheaf. Then $\cV$ admits a Gaeta-type resolution.

In particular, if there are $\mu_H$-semistable sheaves of character ${\bf v}$, the same result holds for a general $\cV\in M_H^{\mu\textit{-ss}}({\bf v})$.  
\end{theorem}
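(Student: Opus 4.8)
The plan is to prove Theorem~\ref{thm-Gaeta} by reducing, via twists and elementary modifications, to a small number of ``base cases'' handled by the machinery of Theorem~\ref{thm-resComplete}. The relevant strong exceptional collection is
$$\OO_{\F_e}(-E-(e+1)F),\ \OO_{\F_e}(-E-eF),\ \OO_{\F_e}(-F),\ \OO_{\F_e},$$
partitioned into the block $\cE_1 = \OO_{\F_e}(-E-(e+1)F)$ and the block $\cF_1 = \OO_{\F_e}(-E-eF)$, $\cF_2 = \OO_{\F_e}(-F)$, $\cF_3 = \OO_{\F_e}$. One first checks that this collection satisfies the hypotheses of Theorem~\ref{thm-resComplete} with $L = F$: the sheaves $\sHom(\cE_1,\cF_j)$ are $\OO(eF)$, $\OO(E+eF)$, $\OO(E+(e+1)F)$ respectively, all of which are globally generated (here one uses $e\geq 0$, and for the $\OO(eF)$ term one also needs the mild discriminant hypothesis to keep $\alpha$ under control — actually $\OO(eF)$ is globally generated for all $e\geq 0$), and conditions (2), (3) follow from the cohomology vanishing in Theorem~\ref{thm-lineBundle} together with the Remark after it (the line bundles $\OO(-F)$, $\OO(-2E-(e+1)F)$, $\OO(-E+bF)$ have no cohomology). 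Thus for any $\alpha,\beta,\gamma,\delta\geq 0$ with $\beta+\gamma+\delta-\alpha>0$, a general map
$$\OO_{\F_e}(-E-(e+1)F)^\alpha \to \OO_{\F_e}(-E-eF)^\beta\oplus\OO_{\F_e}(-F)^\gamma\oplus\OO_{\F_e}^\delta$$
is injective with torsion-free cokernel, and the cokernels form a complete family of $F$-prioritary sheaves; tensoring by a line bundle $L$ gives an $L$-Gaeta-type resolution.

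Next I would handle the reduction. Given a character ${\bf v}$ of rank $r$ and slope $\nu$, I would first choose a line bundle $L$ so that the twisted slope $\nu({\bf v}(-L)) = (\frac{k}{r},\frac{l}{r})$ lies in the parallelogram $Q$ of Corollary~\ref{cor-quad} (whose integer translates tile $N^1(\F_e)_\Q$). The four line bundles $\OO$, $\OO(-E-eF)$, $\OO(-F)$, $\OO(-E-(e+1)F)$ whose slopes are the vertices of $Q$ are exactly the $\cF_j$'s together with $\cE_1$, so I can solve for nonnegative integers $\alpha_0,\beta_0,\gamma_0,\delta_0$ — \emph{with $\alpha_0 = 0$} when the slope lies in the lower triangle $T_1$, and more generally with $\alpha_0$ possibly positive but bounded — realizing $\nu({\bf v}(-L))$ as the slope of the direct sum $\cW = \cF_1^{\beta_0}\oplus\cF_2^{\gamma_0}\oplus\cF_3^{\delta_0}$ (after possibly first shifting $L$ so that we land in $T_1$; one should be careful here, and it may instead be cleaner to allow $\cE_1$ in the direct sum, i.e.\ use a direct-sum sheaf $\cW$ that already has a trivial Gaeta-type ``resolution'' with $\alpha=\beta\cdot 0$-type shape). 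This direct sum $\cW(L)$ has $\Delta(\cW(L))\leq 0$ and has a (trivial) Gaeta-type resolution. Since $\Delta({\bf v})\geq \Delta(\cW(L))$ plus the fudge factor, the integer $m = \chi(\cW(L))-\chi({\bf v}) = r(\Delta({\bf v})-\Delta(\cW(L)))$ is nonnegative, so by Lemma~\ref{lem-elementaryprioritary} a sequence of $m$ general elementary modifications of $\cW(L)$ produces the general sheaf $\cV\in\cP_F({\bf v})$. The remaining point is to show that a general elementary modification of a sheaf with a Gaeta-type resolution again has one; this is done by noting that an elementary modification at a general point can be realized as a sub-cokernel construction — concretely, if $\cV$ has resolution $0\to A\to B\to\cV\to 0$ with $A,B$ sums of the standard line bundles, then a general elementary modification corresponds to replacing $B$ by $B\oplus\OO_{\F_e}$ (adding one to $\delta$) and $A$ by $A\oplus\OO_{\F_e}(-E-(e+1)F)$ (adding one to $\alpha$) — but this changes the rank. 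The correct statement is that one can add one copy of $\OO_{\F_e}(-E-(e+1)F)$ to $A$ while keeping $B$ fixed: a general map $A\oplus\OO(-E-(e+1)F)\to B$ restricts to the old $A\to B$ plus a general section of $B(E+(e+1)F)$, and for the cokernel to drop $\chi$ by exactly one and keep torsion-freeness one needs the base-locus / genericity argument, which is where Theorem~\ref{thm-resComplete}'s completeness statement does the real work: the family of all such cokernels (as $\alpha$ ranges and the maps vary) is complete, hence contains the general $\cV\in\cP_F({\bf v})$.

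So the cleaner logical structure, which I would actually write up, is: (i) verify the exceptional collection hypotheses of Theorem~\ref{thm-resComplete} with $L=F$; (ii) for the given ${\bf v}$, choose the twist $L$ and then choose $\alpha,\beta,\gamma,\delta\geq 0$ with $r = \beta+\gamma+\delta-\alpha = r({\bf v})$ and with the Chern character of the cokernel equal to ${\bf v}(-L)$ — this is a linear-algebra computation in $\ch$, and the discriminant hypothesis is exactly what guarantees a nonnegative-integer solution exists (the $e=0,1$ fudge factors arise because the tiling region $Q$ is ``thicker'' relative to the available line bundles, forcing $\alpha>0$ and hence $\Delta$ of the resolved bundle to be bounded below by a positive constant rather than $0$); (iii) invoke Theorem~\ref{thm-resComplete} to conclude the cokernels $\cV_\phi$ form a complete family of $F$-prioritary sheaves with Chern character ${\bf v}(-L)$, hence by Theorem~\ref{thm-Walter} (irreducibility of $\cP_F({\bf v}(-L))$) a general such $\cV_\phi$ is the general member of $\cP_F({\bf v}(-L))$; (iv) twist by $L$. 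The last sentence of the theorem is then immediate: since $\mu_H$-semistable sheaves are $F$-prioritary and form a dense open substack of $\cP_F({\bf v})$ when nonempty, the general $\mu_H$-semistable sheaf lies in the dense open locus where the Gaeta-type resolution exists.

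\textbf{The main obstacle} I anticipate is step (ii): pinning down, for \emph{every} slope in $N^1(\F_e)_\Q$ and every rank, an explicit nonnegative-integer solution $(\alpha,\beta,\gamma,\delta)$ realizing ${\bf v}(-L)$ — and checking that the discriminant lower bounds in the three cases $e=0$, $e=1$, $e\geq 2$ are precisely what make this possible (and are sharp). This is essentially a lattice-point / cone-membership computation: one must show that the Chern characters realizable as cokernels of maps $\OO(-E-(e+1)F)^\alpha\to\OO(-E-eF)^\beta\oplus\OO(-F)^\gamma\oplus\OO^\delta$ (after all twists) cover every $(\ch_0,\ch_1,\ch_2)$ with the stated $\Delta$ bound, which amounts to analyzing when a certain $3\times 4$ integer matrix system with a nonnegativity constraint is solvable, case by case on where $\nu$ sits relative to the tiling. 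The genericity/torsion-freeness points in (iii) are handled wholesale by Theorem~\ref{thm-resComplete}, so they are not obstacles; the combinatorics of the discriminant bound is the crux.
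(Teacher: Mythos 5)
Your overall architecture is the same as the paper's: reduce to Theorem \ref{thm-resComplete} applied to the block-partitioned strong exceptional collection, observe that for a fixed twist $L$ the exponents are forced to be $\alpha=-\chi({\bf v}(-L-E-F))$, $\beta=-\chi({\bf v}(-L-E))$, $\gamma=-\chi({\bf v}(-L-F))$, $\delta=\chi({\bf v}(-L))$, and then find an $L$ making all four nonnegative. But the step you flag as ``the main obstacle'' is not a detail you have merely deferred --- it is the entire content of the theorem, and it is exactly where the three thresholds $1/4$, $1/8$, $0$ enter, so leaving it as ``a lattice-point / cone-membership computation'' to be done ``case by case'' means the proof is not there. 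The paper's Lemma \ref{lem-Lexists} does this by writing $L=L_{a,b}=\nu({\bf v})-\frac{1}{2}K_X+aE+bF$ and observing that $\chi({\bf v}(-L_{a,b}))=0$ is the hyperbola $\Delta({\bf v})=a(b-\tfrac{1}{2}ae)$ with asymptotes $a=0$ and $b=\tfrac{1}{2}ae$; the four inequalities say that the lattice point $(a,b)$ lies below the left branch while $(a+1,b)$, $(a,b+1)$, $(a+1,b+1)$ lie between the two branches. A greedy choice handles the first three points; the genuine issue is keeping $(a+1,b+1)$ below the right branch, which for $e\geq 2$ follows because the asymptote $b=\tfrac{1}{2}ae$ has slope $\geq 1$, and for $e=0,1$ requires precisely that the $(1,1)$-translate of the left branch not cross the right branch, i.e.\ $\Delta\geq 1/4$ resp.\ $1/8$. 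Your heuristic for the thresholds (``$\alpha>0$ forces $\Delta$ of the resolved bundle to be bounded below by a positive constant'') is not the operative mechanism and would not produce these constants. Note also that the admissible region for $(a,b)$ depends on $\Delta({\bf v})$ and not only on the slope, so the parallelogram-tiling picture from Corollary \ref{cor-quad} (which realizes slopes, not Euler-characteristic inequalities) is not the right tool here.

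Separately, the elementary-modification detour in your middle paragraph does not work as stated: adding a copy of $\OO_{\F_e}(-E-(e+1)F)$ to $A$ while keeping $B$ fixed drops the rank of the cokernel by one and leaves $\chi$ unchanged (since $\chi(\OO_{\F_e}(-E-(e+1)F))=0$), while adding a copy to each of $A$ and $B$ preserves the rank but raises $\chi$ by one; neither operation is an elementary modification, so you cannot propagate Gaeta-type resolutions this way. Fortunately your ``cleaner logical structure'' discards this detour; steps (i), (iii), (iv) there are correct and coincide with Proposition \ref{prop-constructRes} (modulo the small slip that $\sHom(\OO_{\F_e}(-E-(e+1)F),\OO_{\F_e}(-E-eF))=\OO_{\F_e}(F)$, not $\OO_{\F_e}(eF)$), but step (ii) --- the existence of the twist $L$ --- is missing.
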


First we observe that if a sheaf $\cV$ admits an $L$-Gaeta-type resolution, then the exponents in the resolution are determined by ${\bf v} = \ch \cV$, and they are easily computable.

\begin{lemma}\label{lem-exponents}
Suppose $\cV$ is a sheaf on $\F_e$ with an $L$-Gaeta-type resolution $$0\to L(-E-(e+1)F)^\alpha\to L(-E-eF)^\beta\oplus L(-F)^\gamma \oplus L^\delta \to \cV\to 0.$$ Then the exponents $\alpha,\beta,\gamma,\delta$ are the integers
\begin{align*}
\alpha &= -\chi(\cV(-L-E-F))\\
\beta &= -\chi(\cV(-L-E))\\
\gamma &= -\chi(\cV(-L-F))\\
\delta &= \chi(\cV(-L))
\end{align*} which depend only on ${\bf v} = \ch \cV$.  
In particular, $L$ must be a line bundle such that the inequalities 
\begin{align}\label{lb-ineqs}\tag{$\dagger$} \begin{split}\chi({\bf v} (-L))& \geq  0 \\ \chi({\bf v}  (-L-E)) &\leq  0 \\ \chi({\bf v}  (-L-F)) &\leq  0 \\ \chi({\bf v} (-L-E-F)) &\leq  0 \end{split}\end{align}
are satisfied.  
\end{lemma}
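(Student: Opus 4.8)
The plan is to recover the exponents purely numerically, by tensoring the given $L$-Gaeta-type resolution
\[0 \to L(-E-(e+1)F)^\alpha \to L(-E-eF)^\beta \oplus L(-F)^\gamma \oplus L^\delta \to \cV \to 0\]
with suitable line bundles and applying additivity of the Euler characteristic on short exact sequences. First I would observe that the four line bundles in the resolution are exactly the twists by $L$ of the strong exceptional collection $\OO_{\F_e}(-E-(e+1)F), \OO_{\F_e}(-E-eF), \OO_{\F_e}(-F), \OO_{\F_e}$ of the Example above, and I would pair them against the four ``test bundles'' $\OO_{\F_e}(-L)$, $\OO_{\F_e}(-L-F)$, $\OO_{\F_e}(-L-E)$, $\OO_{\F_e}(-L-E-F)$ — precisely the line bundles occurring in \eqref{lb-ineqs}.

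The crux is that the resulting $4\times 4$ matrix of Euler characteristics $\bigl(\chi(B_i \otimes T_j)\bigr)$, where $B_i$ runs over the building blocks and $T_j$ over the test bundles, is diagonal. Indeed, after canceling the factor $L$ each entry is the Euler characteristic of some $\OO_{\F_e}(aE+bF)$ with $-2\le a\le -1$, and I would compute all of these from the Riemann--Roch formula of Theorem \ref{thm-lineBundle}(1): every such line bundle has $\chi=0$ except $\chi(\OO_{\F_e})=1$ and $\chi(K_{\F_e}) = \chi(\OO_{\F_e}(-2E-(e+2)F)) = 1$; in particular $\OO_{\F_e}(-F)$, $\OO_{\F_e}(-E+bF)$ and $\OO_{\F_e}(-2E-(e+1)F)$ all have $\chi = 0$, as recorded in the Remark after Theorem \ref{thm-lineBundle}. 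Granting this, applying $\chi(\cV\otimes-)$ to the resolution against each $T_j$ collapses the two-term alternating sum to a single term, yielding $\chi(\cV(-L)) = \delta$, $\chi(\cV(-L-F)) = -\gamma$, $\chi(\cV(-L-E)) = -\beta$, and $\chi(\cV(-L-E-F)) = -\alpha$. By Lemma \ref{lem-eulerchar} (Riemann--Roch) these Euler characteristics depend only on ${\bf v} = \ch \cV$, so the exponents depend only on ${\bf v}$. Finally, since $\alpha,\beta,\gamma,\delta$ are nonnegative integers by the definition of a Gaeta-type resolution, the four displayed identities immediately force the inequalities \eqref{lb-ineqs}.

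There is no genuine obstacle here: the entire content is the diagonality of the pairing matrix, which reduces to a short list of evaluations of the Riemann--Roch polynomial. The only thing to be careful about is bookkeeping the relevant line-bundle Euler characteristics, and the overall signs coming from the fact that $L(-E-(e+1)F)$ sits on the left side of the two-term resolution.
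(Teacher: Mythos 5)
Your approach is exactly the paper's: tensor the resolution by each of the four test bundles $\OO_{\F_e}(-L)$, $\OO_{\F_e}(-L-F)$, $\OO_{\F_e}(-L-E)$, $\OO_{\F_e}(-L-E-F)$, use additivity of $\chi$ on the short exact sequence, and evaluate the resulting $4\times 4$ table of line-bundle Euler characteristics by Riemann--Roch; the inequalities $(\dagger)$ then follow because the exponents are nonnegative integers by the definition of a Gaeta-type resolution. The structure of the argument is sound and the four identities you end with are the correct ones.

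However, your bookkeeping of the nonzero entries of that table is wrong, and taken literally it contradicts your own conclusion. It is not true that the only nonvanishing entries are $\chi(\OO_{\F_e})=1$ and $\chi(K_{\F_e})=1$: you are missing $\chi(\OO_{\F_e}(-2F))=-1$ (from pairing $L(-F)^\gamma$ against $\OO_{\F_e}(-L-F)$) and $\chi(\OO_{\F_e}(-2E-eF))=-1$ (from pairing $L(-E-eF)^\beta$ against $\OO_{\F_e}(-L-E)$). These two entries are precisely what produce the identities $\chi(\cV(-L-F))=-\gamma$ and $\chi(\cV(-L-E))=-\beta$; if your list of vanishings were correct, the collapse would instead give $\chi(\cV(-L-F))=\chi(\cV(-L-E))=0$, which is false in general. (Your parenthetical claim that every entry has $-2\le a\le -1$ is also off: the blocks $L(-F)$ and $L$ contribute entries with $a=0$.) The correct statement is that exactly four of the sixteen entries are nonzero, namely $\chi(\OO_{\F_e})=\chi(\OO_{\F_e}(-2E-(e+2)F))=1$ and $\chi(\OO_{\F_e}(-2F))=\chi(\OO_{\F_e}(-2E-eF))=-1$, one in each row and each column; with this correction the matrix is indeed (anti)diagonal and the argument goes through exactly as in the paper.
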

\begin{proof}
For example, to establish the equality $\alpha = -\chi(\cV(-L-E-F))$ we tensor the resolution of $\cV$ by $\OO_{\F_e}(-L-E-F)$ and use the additivity of the Euler characteristic and Riemann-Roch for line bundles to find \begin{align*}\chi(\cV(-L-E-F)) &= \beta \chi(\OO_{\F_e}(-2E-(e+1)F))\\&\quad +\gamma \chi(\OO_{\F_e}(-E-2F) \\&\quad + \delta \chi(\OO_{\F_e}(-E-F))\\&\quad -\alpha\chi(\OO_{\F_e}(-2E-(e+2)F)\\&=-\alpha \end{align*}
The other equalities are proved in the same way.
\end{proof}

Conversely, if a line bundle $L$ can be found such that the numerical inequalities (\ref{lb-ineqs}) are satisfied, then the next result shows that general sheaves admit $L$-Gaeta-type resolutions.  Thus the problem of constructing Gaeta-type resolutions is reduced to the numerical problem of finding a line bundle $L$ satisfying the inequalities (\ref{lb-ineqs}).

\begin{proposition}\label{prop-constructRes}
Suppose ${\bf v}\in K(\F_e)$ is a Chern character of positive rank and there is a line bundle $L$ such that the inequalities (\ref{lb-ineqs}) are satisfied.  Then $\cP_F({\bf v})$ is nonempty and a general $\cV\in \cP_F({\bf v})$ admits an $L$-Gaeta-type resolution.
\end{proposition}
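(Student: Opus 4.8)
The strategy is to apply Theorem~\ref{thm-resComplete} to the strong exceptional collection
$$\OO_{\F_e}(-E-(e+1)F),\quad \OO_{\F_e}(-E-eF),\quad \OO_{\F_e}(-F),\quad \OO_{\F_e},$$
twisted by $L$, using $\cE_1 = L(-E-(e+1)F)$ as the first block and $\cF_1 = L(-E-eF)$, $\cF_2 = L(-F)$, $\cF_3 = L$ as the second block. First I would verify the three hypotheses of Theorem~\ref{thm-resComplete} with the choice $L_{\mathrm{twist}} = F$ (the prioritariness divisor): condition (1) asks that $\sHom(\cE_1,\cF_j)$ be globally generated, i.e.\ that $\OO_{\F_e}(F)$, $\OO_{\F_e}(E+F)$, $\OO_{\F_e}(E+(e+1)F)$ be globally generated, which holds since these classes are nef; conditions (2) and (3) ask vanishing of certain $\Ext^1$ and $\Ext^2$ groups of the twisted collection, which reduce to vanishing of $H^1$ and $H^2$ of explicit line bundles of the form $\OO_{\F_e}(aE+bF)$ and follow from Theorem~\ref{thm-lineBundle} (indeed these are exactly the vanishings that make the collection strong and exceptional). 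The only subtlety is that the first block must come first in the exceptional ordering; since $L(-E-(e+1)F)$ is the ``leftmost'' bundle of the collection this is automatic.

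Next I would set the exponents $\alpha,\beta,\gamma,\delta$ to be the integers dictated by Lemma~\ref{lem-exponents}, namely $\alpha = -\chi({\bf v}(-L-E-F))$, $\beta = -\chi({\bf v}(-L-E))$, $\gamma = -\chi({\bf v}(-L-F))$, $\delta = \chi({\bf v}(-L))$. The hypothesis that the inequalities $(\dagger)$ hold guarantees $\alpha,\beta,\gamma,\delta \geq 0$, so these are legitimate nonnegative exponents. I would then check the rank condition of Theorem~\ref{thm-resComplete}, namely $\beta + \gamma + \delta - \alpha > 0$: by additivity of the Euler characteristic this quantity equals $r({\bf v})$ up to the contributions of the line bundles' ranks (all rank one), and a direct computation using the fact that the transition matrix between the exceptional collection and $\{1\}$ is unimodular shows $\beta+\gamma+\delta-\alpha = r({\bf v}) > 0$. (Alternatively, one can argue that a map $\phi$ in the relevant $\Hom$ space with these exponents, when it is injective with torsion-free cokernel, produces a cokernel of the right rank and Chern character by Riemann-Roch, so the rank must be $r({\bf v})$.)

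With the hypotheses of Theorem~\ref{thm-resComplete} verified, that theorem yields a nonempty open set $U$ of injective maps $\phi\colon L(-E-(e+1)F)^\alpha \to L(-E-eF)^\beta\oplus L(-F)^\gamma\oplus L^\delta$ with torsion-free cokernel, and the family $\cV_\phi/U$ is a \emph{complete} family of $F$-prioritary sheaves. In particular $\cP_F({\bf v})$ is nonempty, and since $\cP_F({\bf v})$ is irreducible (Theorem~\ref{thm-Walter}) and $\cV_\phi/U$ is complete, the image of $U$ in $\cP_F({\bf v})$ is dense; hence a general $\cV\in\cP_F({\bf v})$ is of the form $\cV_\phi$ and therefore admits an $L$-Gaeta-type resolution. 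Finally, the Chern character of $\cV_\phi$ is forced to equal ${\bf v}$: by Lemma~\ref{lem-exponents} the exponents in any $L$-Gaeta-type resolution are determined by the Chern character of the cokernel, and conversely the specific choice of $\alpha,\beta,\gamma,\delta$ above is arranged precisely so that $\ch\cV_\phi = {\bf v}$ (this is a short Riemann-Roch bookkeeping, using that $\ch$ of the middle term minus $\ch$ of the left term, with our exponents, gives ${\bf v}$).

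\textbf{Main obstacle.} The genuinely delicate point is \emph{completeness} of the family, but that is handed to us by Theorem~\ref{thm-resComplete}; what remains for us is purely the verification that the twisted exceptional collection satisfies hypotheses (1)--(3) of that theorem with $L_{\mathrm{twist}} = F$, together with the bookkeeping that the prescribed exponents are nonnegative (this is exactly $(\dagger)$) and reproduce ${\bf v}$. None of these steps is hard, but hypothesis (1)---global generation of $\sHom(\cE_1,\cF_j)$---should be checked with care, as it is the one condition not automatically implied by the collection being strong exceptional; it holds here only because after twisting by $E+(e+1)F$ the relevant bundles become nef line bundles $\OO_{\F_e}(F)$, $\OO_{\F_e}(E+F)$, $\OO_{\F_e}(E+(e+1)F)$, all of which are globally generated on $\F_e$.
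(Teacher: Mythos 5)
Your proposal is correct and follows essentially the same route as the paper: apply Theorem~\ref{thm-resComplete} to the twisted strong exceptional collection with the exponents dictated by Lemma~\ref{lem-exponents} (nonnegative precisely because of $(\dagger)$), verify that the cokernel has character ${\bf v}$ and positive rank, and conclude by completeness of the family together with irreducibility of $\cP_F({\bf v})$. One small arithmetic slip to fix: $\sHom(L(-E-(e+1)F),L(-F)) = \OO_{\F_e}(E+eF)$, not $\OO_{\F_e}(E+F)$ --- the latter is not nef (hence not globally generated) for $e\geq 2$, whereas the correct class $E+eF$ is nef and globally generated, so the verification of hypothesis (1) goes through as intended.
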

\begin{proof}
Define positive integers $\alpha,\beta,\gamma,\delta$ as in the statement of Lemma \ref{lem-exponents} and define $$A = L(-E-(e+1)F)^\alpha \qquad B = L(-E-eF)^\beta \oplus L(-F)^\gamma \oplus L^\delta.$$ Then any complex $A\fto{\phi} B$ sitting in degrees $-1$ and $0$ has Chern character ${\bf v}$.  Indeed, $K(\F_e) \te \Q$ has a basis given by the four line bundles \begin{equation*} -L-E-F,-L-E,-L-F,-L.\end{equation*}  If ${\bf w}$ is the character of the complex $A\fto\phi B$, then by the choice of the integers $\alpha,\beta,\gamma,\delta$ we find that ${\bf v} - {\bf w}$ is orthogonal to each of these four line bundles under the Euler pairing $({\bf u},{\bf u'}) = \chi({\bf u}\te {\bf u}')$.  Since the Euler pairing is nondegenerate, ${\bf v} = {\bf w}$.  In particular, the rank of the complex $r(B)-r(A) = r({\bf v})$ is positive.

Let $U \subset \Hom(A,B)$ be the open subset parameterizing injective sheaf maps $\phi$ with torsion-free cokernel $\cV_\phi$.  We check the hypotheses (1)-(3) of Theorem  \ref{thm-resComplete}.  Note that each of the line bundles \begin{align*}\sHom(L(-E-(e+1)F,L(-E-eF)) &= \OO_{\F_e}(F) \\
\sHom(L(-E-(e+1)F),L(-F))&= \OO_{\F_e}( E+eF)\\
\sHom(L(-E-(e+1)F),L) &= \OO_{\F_e}(E+(e+1)F))\end{align*}
are globally generated.  The vanishings $\Ext^1(A,B(-F)) = 0$ and $\Ext^2(B,B(-F))$ both follow immediately from Theorem \ref{thm-lineBundle}.  We already saw $r(B)-r(A) > 0$.  Thus the hypotheses of Theorem \ref{thm-resComplete} are satisfied, and $\cV_\phi/U$ is a nonempty complete family of $F$-prioritary sheaves which admit $L$-Gaeta-type resolutions.  Since $\cP_F({\bf v})$ is irreducible, this completes the proof.
\end{proof}

Finally we show that under the assumptions on $\Delta({\bf v})$ in Theorem \ref{thm-Gaeta} it is possible to find a suitable line bundle $L$.  Together with Proposition \ref{prop-constructRes}, the next result completes the proof of Theorem \ref{thm-Gaeta}.
 
\begin{lemma}\label{lem-Lexists}
Let ${\bf v}\in K(\F_e)$ be a Chern character of positive rank, and assume $$\begin{array}{ll} \Delta({\bf v})  \geq 1/4 & \mbox{if $e = 0$}\\
\Delta({\bf v}) \geq 1/8 & \mbox{if $e = 1$}\\
\Delta({\bf v}) \geq 0 & \mbox{if $e\geq 2$}.\\
\end{array}$$  Then there exists a line bundle $L$ such that the inequalities (\ref{lb-ineqs}) hold.
\end{lemma}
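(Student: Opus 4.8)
The plan is to translate the system $(\dagger)$ into an elementary lattice-point problem on $N^1(\F_e)_\Q$ and then solve it by a judicious choice of the intersection numbers $c_1(L)\cdot F$ and $c_1(L)\cdot E$. Set $\xi := \nu({\bf v}(-L)) = \nu({\bf v}) - c_1(L)$ and $\Delta := \Delta({\bf v}) = \Delta({\bf v}(-L))$. By Riemann--Roch, for any divisor class $D$ one has $\chi({\bf v}(-L-D)) = r({\bf v})\bigl(P(\xi - D) - \Delta\bigr)$, so since $r({\bf v})>0$ the system $(\dagger)$ is equivalent to $P(\xi)\geq\Delta$ together with $P(\xi-E)\leq\Delta$, $P(\xi-F)\leq\Delta$, and $P(\xi-E-F)\leq\Delta$. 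The key algebraic input is the factorization $P(aE+bF) = (a+1)\bigl(b+1-\tfrac e2 a\bigr)$; this holds because $P(\nu(\OO_{\F_e}(aE+bF))) = \chi(\OO_{\F_e}(aE+bF))$, which Theorem \ref{thm-lineBundle}(1) computes, and $P$ is a polynomial function of the class. Writing $\xi = xE+yF$ and putting $s = x+1$, $t = y+1-\tfrac e2 x$, we get $P(\xi) = st$, $P(\xi - F) = s(t-1)$, $P(\xi-E) = (s-1)\bigl(t+\tfrac e2\bigr)$, and $P(\xi - E - F) = (s-1)\bigl(t+\tfrac e2-1\bigr)$.

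Next I would record how $s$ and $t$ vary with $L$. Writing $c_1(L) = aE+bF$ and $\nu({\bf v}) = \tfrac k r E + \tfrac l r F$ with $r = r({\bf v})$, we have $s = \tfrac k r - a + 1$, so as $a$ ranges over $\Z$ the real number $s$ ranges over the whole coset $\tfrac k r + \Z \subset \R$; and with $a$ fixed, $t$ ranges over a whole coset of $\Z$ as $b$ ranges over $\Z$. The strategy is to take $a = \lceil k/r\rceil$, so that $s \in (0,1]$, and then to choose $b$ so that $t$ is the \emph{smallest} element of its coset with $t \geq \Delta/s$. Such a $b$ exists (the coset is discrete and unbounded above, and $\Delta/s \geq 0$ is finite), and by minimality $\Delta/s \leq t < \Delta/s + 1$, i.e.\ $0 \leq st - \Delta < s$.

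Finally I would check the four inequalities for this choice of $L$. The first three are nearly automatic: $st \geq \Delta$ holds by the choice of $t$; $s(t-1) = st - s < \Delta$ since $st < \Delta + s$; and $(s-1)\bigl(t+\tfrac e2\bigr) \leq 0 \leq \Delta$ since $s \leq 1$ while $t + \tfrac e2 \geq \Delta/s \geq 0$. The fourth inequality, $(s-1)\bigl(t + \tfrac e2 - 1\bigr)\leq\Delta$, is the only delicate point: its left side is $\leq 0 \leq \Delta$ unless $s < 1$ and $t < 1-\tfrac e2$, the latter already forcing $e \in \{0,1\}$. In that remaining case $1 - s > 0$ and $1 - \tfrac e2 - t > 0$, and since $t\geq\Delta/s$ we have $(s-1)\bigl(t+\tfrac e2 -1\bigr) = (1-s)\bigl(1-\tfrac e2 - t\bigr) \leq (1-s)\bigl(1-\tfrac e2 - \tfrac\Delta s\bigr)$; multiplying by $s > 0$ shows the right-hand side is $\leq\Delta$ precisely when $s(1-s)\bigl(1-\tfrac e2\bigr)\leq\Delta$. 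As $s(1-s) \leq \tfrac14$ and $1-\tfrac e2 > 0$ here, this holds whenever $\Delta\geq\tfrac14$ (if $e=0$) or $\Delta\geq\tfrac18$ (if $e=1$); and when $e\geq 2$ the case $t < 1-\tfrac e2$ cannot occur, so $\Delta\geq 0$ suffices. These are exactly the hypotheses.

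I expect the only real obstacle to be the combinatorial bookkeeping in this last step — in particular, recognizing that normalizing $s$ into the interval $(0,1]$ is the right choice (this, via $s(1-s) \leq \tfrac14$, is precisely what yields the sharp constants $\tfrac14$ and $\tfrac18$), and carefully isolating the single sub-case in which the fourth inequality is not automatic. Everything else is routine, and nothing beyond Riemann--Roch and Theorem \ref{thm-lineBundle} is needed.
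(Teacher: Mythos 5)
Your proof is correct. It rests on the same underlying picture as the paper's argument---both reduce $(\dagger)$ to placing a lattice point correctly with respect to the hyperbola $\chi({\bf v}(-L))=0$, and in both the constants $1/4$ and $1/8$ come from the same quadratic estimate---but the execution is genuinely different and arguably cleaner. The paper works geometrically in an $(a,b)$-plane centered at $\nu({\bf v})-\tfrac12 K_X$: it produces the lattice point by an iterative ``increment until you would cross the branch'' procedure, and it disposes of the one delicate inequality by checking that the translate of one branch of the hyperbola by $(1,1)$ is disjoint from (or tangent to) the other branch, a computation it leaves to the reader. You instead normalize $s=\nu({\bf v}(-L))\cdot F+1$ into $(0,1]$ by the explicit choice $a=\lceil k/r\rceil$, take $t$ minimal in its coset above $\Delta/s$, and reduce the fourth inequality to the elementary one-variable bound $s(1-s)\bigl(1-\tfrac e2\bigr)\leq\Delta$; this makes the provenance of the sharp constants (via $s(1-s)\leq\tfrac14$) completely transparent and handles the degenerate case $\Delta=0$, $e\geq 2$ uniformly rather than by a separate remark. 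The factorization $P(xE+yF)=(x+1)\bigl(y+1-\tfrac e2 x\bigr)$ driving your computation is exactly the paper's hyperbola written multiplicatively, and all four verifications in your final step check out.
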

\begin{proof}For simplicity we first assume $\Delta({\bf v}) >0$, and handle the case $\Delta({\bf v}) = 0$ (if $e\geq 2$) later.
Consider the curve  $Q:\chi({\bf v}(-L_{a,b})) = 0$ in the $(a,b)$-plane, where  $L_{a,b}$ is the variable line bundle $$L_{a,b}= \nu({\bf v}) -\frac{1}{2}K_X+ aE +bF \qquad (a,b\in\R)$$ ``centered'' at $\nu({\bf v})-\frac{1}{2}K_X$.  Then by Riemann-Roch, \begin{align*} \frac{\chi({\bf v}(-L_{a,b}))}{r({\bf v})} &= (1-(a+1))(1-(b+1+\frac{e}{2})) \\&\quad - \frac{e(-(a+1))(-(a+1)+1)}{2}-\Delta({\bf v}),\end{align*} so $Q$ is the hyperbola $$\Delta({\bf v}) = a\left(b-\frac{1}{2}ae\right)$$ with asymptotes $$\ell_1:a=0 \qquad \mbox{and} \qquad \ell_2: b = \frac{1}{2}ae$$ meeting at the origin.  Since $\Delta({\bf v}) > 0$, the right branch $Q_1$ (resp. left branch $Q_2$) of $Q$ lies right of $\ell_1$ and above $\ell_2$ (resp. left of $\ell_1$ and below $\ell_2$).  The function $\chi({\bf v}(-L_{a,b}))$ is negative for any $(a,b)$ on $\ell_1$, so it is negative for all points lying between the two branches and positive for all points which are either below $Q_2$ or above $Q_1$.  

Those $(a,b)\in \R^2$ such that $L_{a,b}$ is integral form a shift $\Lambda\subset \R^2$ of the standard integral lattice $\Z^2\subset \R^2$.  Thus to construct the line bundle $L$ we only need to find a point $(a,b)\in \Lambda$ such that $(a,b)$ lies below $Q_2$ and the points $(a+1,b)$, $(a,b+1)$, $(a+1,b+1)$ lie between $Q_2$ and $Q_1$.  It is easy to find a point $(a,b)\in \Lambda$ such that $(a,b)$ lies below $Q_2$ and $(a+1,b)$ and $(a,b+1)$ are both above $Q_2$: start from an arbitrary point $(a,b)\in \Lambda$ below $Q_2$ and repeatedly increment $a$ and/or $b$ by $1$ until increasing either will cross $Q_2$.  Let $(a,b)\in \Lambda$ be any point below $Q_2$ such that $(a+1,b)$ and $(a,b+1)$ are both above $Q_2$.  The point $(a+1,b)$ still lies below $\ell_2$, and the point $(a,b+1)$ still lies left of $\ell_1$, from which we conclude that the points $(a+1,b)$ and $(a,b+1)$ both lie between $Q_2$ and $Q_1$.  For this choice of $(a,b)$ we claim that additionally $(a+1,b+1)$ is between $Q_2$ and $Q_1$.

Since $Q_2$ can be described as the graph of a function $b=f(a)$, the fact that $(a+1,b)$ is above $Q_2$ implies $(a+1,b+1)$ is above $Q_2$.  It remains to show that $(a+1,b+1)$ is below $Q_1$.  If $e\geq 2$, then since $(a,b)$ is below $\ell_2$ and $\ell_2$ has slope $e/2 \geq 1$, we find that $(a+1,b+1)$ is below $\ell_2$ and hence $(a+1,b+1)$ is below $Q_1$.

On the other hand, for $e = 0$ a simple computation shows that the translate of $Q_2$ by the vector $(1,1)$ is disjoint from $Q_1$ if $\Delta({\bf v}) > 1/4$ and tangent to $Q_1$ if $\Delta({\bf v}) = 1/4$.  Thus assuming $\Delta({\bf v}) \geq 1/4$, the point $(a+1,b+1)$ is on or below $Q_1$.  An identical computation shows that an analogous result holds when $e=1$ and $\Delta({\bf v})\geq 1/8$.

If $e \geq 2$ and $\Delta({\bf v}) = 0$, the hyperbola $Q$ degenerates to the union of lines $\ell_1 \cup \ell_2$, and a similar argument works.  Thus a suitable line bundle $L$ can be found in each case.
\end{proof}

\begin{remark}
The stronger inequalities on $\Delta({\bf v})$ needed in case $e=0$ or $e=1$ are in general necessary.  For an easy example, suppose $e=0$, $\nu({\bf v}) = \frac{1}{2}E + \frac{1}{2}F$ (so $r({\bf v})$ is even), and assume $0 < \Delta({\bf v}) < 1/4$, which can certainly be arranged if $r({\bf v})$\ is sufficiently large.  The shifted lattice $\Lambda\subset \R^2$ is $\Z^2 + (\frac{1}{2},\frac{1}{2})$.  The hyperbola $Q$ becomes $ab = \Delta({\bf v})$, so the points $(a,b)\in \Lambda$ such that $\chi(\cV(-L_{a,b}))>0$ are exactly the points of $\Lambda$ in the first and third quadrants.  No choice $(a,b)$ making the inequalities (\ref{lb-ineqs}) hold exists, so no sheaf of character ${\bf v }$ admits a Gaeta-type resolution.
\end{remark}

\section{Globally generated bundles}\label{sec-gg}

Throughout this section we let ${\bf v}\in K(\F_e)$ be a character of positive rank such that $\Delta({\bf v})\geq 0$ and $\nu({\bf v})$ is nef.  In this section we classify those characters ${\bf v}$ such that the general sheaf $\cV\in \cP_F({\bf v})$ is globally generated.  We say that ${\bf v}$ is \emph{globally generated} if a general sheaf $\cV\in \cP_F({\bf v})$ is.  There is no loss of generality in assuming $\nu({\bf v})$ is nef.  Indeed, if $r({\bf v}) \geq 2$ then the general sheaf $\cV\in \cP_F({\bf v})$ is a vector bundle, and if it is globally generated then $c_1({\bf v})$ is nef by Lemma \ref{lem-ggnef}.  In the rank 1 case, if $L$ is a line bundle and $Z\subset X$ is a \emph{general} collection of $n$ points, then $L\te I_Z$ can only be globally generated if $L$ is, and therefore $L$ is again nef by Lemma \ref{lem-ggnef}. 

Since $\nu({\bf v})$ is nef, it follows from Theorem \ref{thm-betti} that a general $\cV\in \cP_F({\bf v})$ is nonspecial.  While the substack of $\cP_F({\bf v})$ of globally generated sheaves is not necessarily open, the substack of $\cP_F({\bf v})$ of globally generated sheaves with no higher cohomology is open.  Thus, a character ${\bf v}$ is globally generated if and only if there exists a sheaf $\cV\in \cP_F({\bf v})$ which is globally generated and has no higher cohomology.  As usual, if ${\bf v}$ is globally generated and there are $\mu_H$-semistable sheaves of character ${\bf v}$, then the general $\cV\in M_H^{\mu{\textit{-ss}}}({\bf v})$ is also globally generated.

The classification of globally generated Chern characters on $\F_e$ falls into three main cases.  First, if $\nu({\bf v})\cdot F = 0$, then we will see that ${\bf v}$ can only be globally generated if it is pulled back from $\P^1$.  This imposes strong restrictions on ${\bf v}$.  When $\nu({\bf v})\cdot F >0$, it is convenient to discuss two separate cases depending on the sign of $\chi({\bf v}(-F))$.  Note that since $\nu({\bf v})$ is nef, we have $\nu({\bf v}(-F)) \cdot F > 0$  and $\nu({\bf v}(-F)) \cdot E\geq -1$. Thus by Theorem \ref{thm-betti}, the cohomology of $\cV(-F)$ is determined by its Euler characteristic.  When $\chi({\bf v}(-F)) \geq 0,$ a general sheaf $\cV$ of character ${\bf v}$ has $H^1(\F_e,\cV(-F)) = 0$ and it is easy to prove global generation by restricting to a fiber.  On the other hand when $\chi({\bf v}(-F))<0$, any sheaf $\cV$ of character ${\bf v}$ has $H^1(\F_e,\cV(-F))\neq 0$ and the restriction to a fiber is not so useful.  In this case we construct globally generated vector bundles by first constructing a suitable ``Lazarsfeld-Mukai'' type bundle $$0 \to \cM \to \OO^{\chi({\bf v})}_{\F_e} \to \cV\to 0.$$ Our Gaeta-type resolutions provide a key tool in analyzing the bundle $\cM$.  

We now state the full classification theorems.  The classification is slightly different depending on if $e=0$, $e=1$, or $e\geq 2$.  To make the statements as clean as possible we state the $e=0$ case separately.

\begin{theorem}\label{thm-gg1}
Suppose $e\geq 1$, and let ${\bf v}\in K(\F_e)$ be a Chern character of positive rank such that $\Delta({\bf v})\geq 0$ and $\nu({\bf v})$ is nef.  Then ${\bf v}$ is globally generated if and only if one of the following holds.
\begin{enumerate}
\item We have $\nu({\bf v})\cdot F = 0$, and there are integers $a,m\geq 0$ such that $${\bf v} = (r({\bf v})-m)\ch \OO_{\F_e}(aF) + m \ch \OO_{\F_e}((a+1)F).$$

\item We have $\nu({\bf v}) \cdot F > 0$ and $\chi({\bf v}(-F)) \geq 0$.

\item We have $\nu({\bf v})\cdot F > 0$, $\chi({\bf v}(-F)) < 0$, and $\chi({\bf v})\geq r({\bf v})+2$.
\item We have $e=1$, $\nu({\bf v})\cdot F >0$, $\chi({\bf v}(-F))<0$, $\chi({\bf v}) = r({\bf v})+1$, and $${\bf v} = (r({\bf v})+1)\ch \OO_{\F_1} - \ch \OO_{\F_1}(-2E-2F).$$
\end{enumerate}
\end{theorem}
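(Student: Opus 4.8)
The plan is to split into three regimes according to the value of $\nu({\bf v})\cdot F$, using throughout the reduction recorded before the theorem: ${\bf v}$ is globally generated if and only if $\cP_F({\bf v})$ contains one globally generated bundle with no higher cohomology. Note also that since $\nu({\bf v})$ is nef, Theorem~\ref{thm-betti} makes the general $\cV\in\cP_F({\bf v})$ nonspecial, so $h^0(\F_e,\cV)=\max\{\chi({\bf v}),0\}$, and $\nu({\bf v})\cdot E\ge 0$. \emph{Regime $\nu({\bf v})\cdot F=0$.} If $\cV$ is globally generated with $c_1(\cV)\cdot F=0$, then $\cV|_f$ is a globally generated degree-$0$ bundle on every fiber $f$, hence trivial; by cohomology and base change $\cV\cong\pi^*\pi_*\cV$, and restricting to $E$ identifies $\pi_*\cV$ with the globally generated bundle $\cV|_E$, which is balanced by Corollary~\ref{cor-balanced}. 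This forces ${\bf v}$ into the stated pullback form (in particular $\Delta({\bf v})=0$). Conversely $\pi^*(\OO_{\PP^1}(a)^{r({\bf v})-m}\oplus\OO_{\PP^1}(a+1)^m)$ with $a\ge 0$ is a globally generated, $F$-prioritary bundle with no higher cohomology of every such character, so ${\bf v}$ is globally generated.

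\emph{Regime $\nu({\bf v})\cdot F>0$, $\chi({\bf v}(-F))\ge 0$.} By Lemma~\ref{lem-eulerchar}, $\chi({\bf v})=\chi({\bf v}(-F))+c_1({\bf v})\cdot F+r({\bf v})\ge r({\bf v})+1>0$, so the general $\cV$ has no higher cohomology; moreover $\nu({\bf v}(-F))\cdot F>-1$ and $\nu({\bf v}(-F))\cdot E\ge -1$, so Theorem~\ref{thm-betti} gives $H^1(\F_e,\cV(-F))=0$ for general $\cV$. Hence $H^0(\F_e,\cV)\to H^0(f,\cV|_f)$ is onto for every fiber $f$, and $\cV$ is globally generated as soon as each $\cV|_f$ is. The restrictions to general fibers are balanced (Proposition~\ref{prop-balanced}) of positive degree, hence globally generated; one controls the finitely many jumping fibers either by choosing $\cV$ as the cokernel in a Gaeta-type resolution whose line-bundle terms are globally generated (so $\cV$ is a quotient of a globally generated bundle by Lemma~\ref{lem-quotientofgg}), or by a semicontinuity argument exploiting the $F$-prioritary condition. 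In either presentation ${\bf v}$ comes out globally generated.

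\emph{Regime $\nu({\bf v})\cdot F>0$, $\chi({\bf v}(-F))<0$, small Euler characteristic.} Here $H^1(\F_e,\cV(-F))\ne 0$ for every $\cV$, so restriction to a fiber is useless and everything is governed by $\chi({\bf v})$. If $\chi({\bf v})\le r({\bf v})$ then a globally generated bundle with no higher cohomology would have at most $r({\bf v})$ sections, hence be trivial, forcing $c_1=0$ — impossible; so ${\bf v}$ is not globally generated. If $\chi({\bf v})=r({\bf v})+1$, such a bundle sits in $0\to\OO_{\F_e}(-c_1({\bf v}))\to\OO_{\F_e}^{r+1}\to\cV\to 0$ (the kernel is locally free of rank $1$ with first Chern class $-c_1({\bf v})$), which forces $c_1({\bf v})$ base-point-free and $\ch_2({\bf v})=-\tfrac12 c_1({\bf v})^2$; rewriting $\chi({\bf v})=r({\bf v})+1$ as $c_1({\bf v})\cdot(c_1({\bf v})+K_{\F_e})=-2$ and combining it with base-point-freeness and $\chi({\bf v}(-F))<0$ pins down $e=1$ and $c_1({\bf v})=2E+2F$ — exactly case~(4) — while conversely $\OO_{\F_1}^{r+1}/\OO_{\F_1}(-2E-2F)$ is checked directly from its resolution to be a globally generated $F$-prioritary bundle with no higher cohomology. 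So in this regime ${\bf v}$ is globally generated precisely when $\chi({\bf v})\ge r({\bf v})+2$ or the exceptional $e=1$ situation occurs.

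\emph{The case $\chi({\bf v})\ge r({\bf v})+2$, and the main obstacle.} It remains to show that $\nu({\bf v})\cdot F>0$, $\chi({\bf v}(-F))<0$, and $\chi({\bf v})\ge r({\bf v})+2$ imply that ${\bf v}$ is globally generated. For general $\cV$, set $N:=\chi({\bf v})=h^0(\F_e,\cV)$ and build a Lazarsfeld--Mukai bundle $\cM$ fitting in $0\to\cM\to\OO_{\F_e}^{N}\to\cV\to 0$; then $\cV$ is a quotient of $\OO_{\F_e}^{N}$, hence globally generated, and it has no higher cohomology. The real content is to show such a sequence can be arranged with $\cV$ an $F$-prioritary bundle of character ${\bf v}$: dually one needs $\cM^{\vee}$ globally generated by $N$ sections, and since $r({\bf v})\ge 2$ there are enough sections available, but verifying the requisite positivity of $\cM^{\vee}$ requires analyzing it through a line-bundle resolution, which is where the Gaeta-type resolution of Theorem~\ref{thm-Gaeta} enters decisively. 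I expect this step — extracting the needed positivity and prioritariness of the Lazarsfeld--Mukai bundle from a Gaeta-type resolution, together with the extra care needed when $e=1$ and $\Delta({\bf v})$ is small enough that Theorem~\ref{thm-Gaeta} does not apply verbatim — to be the hard part of the proof; the ``only if'' directions, by contrast, are the elementary arguments above.
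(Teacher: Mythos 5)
Your treatment of case (1), of case (2) modulo the jumping-fiber issue, of the necessity direction when $\chi({\bf v})\leq r({\bf v})+1$, and of the exceptional character in case (4) all match the paper's arguments in substance (your numerical route to $e=1$, $c_1({\bf v})=2E+2F$ via $c_1\cdot(c_1+K_{\F_e})=-2$ is a clean variant of the paper's cohomological one, and it checks out). However, there is a genuine gap: the sufficiency of case (3) --- that $\nu({\bf v})\cdot F>0$, $\chi({\bf v}(-F))<0$ and $\chi({\bf v})\geq r({\bf v})+2$ imply ${\bf v}$ is globally generated --- is the main content of the theorem (it is Theorem \ref{thm-GGintro} of the introduction), and you explicitly leave it as an expectation rather than a proof. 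Identifying the Lazarsfeld--Mukai strategy is the right first step, but the actual work is constructing a bundle $\cM$ of character ${\bf m}=\chi({\bf v})\ch\OO_{\F_e}-{\bf v}$ satisfying the three conditions of Lemma \ref{lem-mukai} (no cohomology, $h^1(\F_e,\cM(-F))=0$, and $\cM^*$ globally generated), and your sketch does not supply the mechanism.

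Moreover, your worry about ``the extra care needed when $e=1$ and $\Delta({\bf v})$ is small enough that Theorem \ref{thm-Gaeta} does not apply verbatim'' indicates you are aiming the Gaeta machinery at the wrong character. The paper never invokes the discriminant bounds of Theorem \ref{thm-Gaeta} here: instead, Lemma \ref{lem-serreDualIneqs} verifies by direct Euler-characteristic computations that the Serre dual character ${\bf m}^D$ satisfies the inequalities (\ref{lb-ineqs}) with $L=\OO_{\F_e}$ (using $\chi({\bf v}(-F))<0$, $\chi({\bf v})\geq r({\bf v})$, and nefness of $c_1({\bf v})$), so Proposition \ref{prop-constructRes} applies unconditionally and yields a resolution $0\to\OO_{\F_e}(-E-(e+1)F)^\alpha\to\OO_{\F_e}(-E-eF)^\beta\oplus\OO_{\F_e}(-F)^\gamma\to\cM^D\to 0$ with no $\OO_{\F_e}$ summand since $\chi({\bf m}^D)=0$; all three required properties of $\cM$ are then read off from this resolution and its dual (global generation of $\cM^*$ along $E$ needing the additional observation that $\cM^D$ is $E$-prioritary so $\cM^*|_E$ is balanced). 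There is also a residual case $\chi({\bf m}^D(-E-F))\geq 0$, which forces $e\in\{0,1\}$ and is handled by taking $\cM^D$ to be an explicit direct sum of line bundles. Separately, in case (2) your proposed fixes for the finitely many unbalanced fibers are not quite right as stated: the Gaeta-type resolution need not have globally generated middle term, and the correct statement --- that a general $F$-prioritary sheaf with $\nu\cdot F>0$ restricts to a globally generated bundle on \emph{every} fiber --- is a cited result of Walter, not an immediate consequence of Proposition \ref{prop-balanced} (which only controls the restriction to a fixed curve).
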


The classification changes slightly in the case of $\P^1\times \P^1$ since $F$ and $E$ are the fiber classes for the two rulings. 

\begin{theorem}\label{thm-gg0}
Let ${\bf v}\in K(\P^1\times \P^1)$ be a Chern character of positive rank such that $\Delta({\bf v}) \geq 0$ and $\nu({\bf v})$ is nef.  Let $F_1,F_2$ be fibers in opposite rulings.  Then ${\bf v}$ is globally generated if and only if one of the following holds.
\begin{enumerate}
\item We have $\nu({\bf v})\cdot F_i = 0$ for some $i\in \{1,2\}$, and there are integers $a,m\geq 0$ such that $${\bf v} = (r({\bf v})-m)\ch \OO_{\P^1\times \P^1}(aF_i) + m\ch \OO_{\P^1\times \P^1}((a+1)F_i).$$
\item We have $\nu({\bf v})\cdot F_i > 0$ for $i=1,2$, but $\chi({\bf v}(-F_j)) \geq 0$ for some $j\in \{1,2\}$.
\item We have $\nu({\bf v})\cdot F_i > 0$ and $\chi({\bf v}(-F_i))<0$ for $i=1,2$, and $\chi({\bf v})\geq r({\bf v})+2$.
\end{enumerate}
\end{theorem}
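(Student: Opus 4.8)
The plan is to follow the three-case structure of Theorem \ref{thm-gg1}, exploiting the symmetry of $\F_0 = \P^1\times\P^1$ interchanging the fiber classes $F_1,F_2$; the absence of a section of negative self-intersection is why no analogue of case (4) of Theorem \ref{thm-gg1} appears. Since $\nu({\bf v})$ is nef, Theorem \ref{thm-betti} shows the general $\cV\in\cP_F({\bf v})$ is nonspecial, and the locus of globally generated bundles with no higher cohomology is open in $\cP_F({\bf v})$; so ${\bf v}$ is globally generated if and only if $\cP_F({\bf v})$ contains one such bundle, and I may pass freely between the general member and a convenient one.

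\emph{Necessity.} By Lemma \ref{lem-ggnef}, $\nu({\bf v})$ is nef. If $\nu({\bf v})\cdot F_i = 0$, restricting a globally generated $\cV$ to a fiber $f$ with $[f] = F_i$ gives a globally generated degree-$0$ bundle on $\P^1$, hence trivial; so $\cV$ is trivial on every fiber of the ruling $\pi_i$ and $\cV\cong\pi_i^*\cW$, forcing $\Delta({\bf v}) = 0$ and ${\bf v} = (r({\bf v}), sF_i, 0)$, with $s\geq 0$ since $\cW$ is globally generated --- this is case (1). If $\nu({\bf v})\cdot F_i > 0$ for both $i$ but $\chi({\bf v}(-F_j))\geq 0$ for some $j$, we are in case (2). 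In the remaining regime $\nu({\bf v})\cdot F_i > 0$ and $\chi({\bf v}(-F_i)) < 0$ for $i = 1,2$, the general $\cV$ has $h^0(\cV) = \chi({\bf v})$ (nonspecial, and $\chi < 0$ forces $h^0 = 0$); if $\cV$ is globally generated with $h^0(\cV)\leq r({\bf v})+1$ the kernel in $0\to\cM\to\OO_{\F_0}^{h^0(\cV)}\to\cV\to 0$ has rank at most $1$. Rank $0$ gives $\cV\cong\OO_{\F_0}^{r({\bf v})}$, contradicting $\nu({\bf v})\cdot F_i > 0$; rank $1$ gives $\cM\cong I_Z\otimes\OO_{\F_0}(-c_1({\bf v}))$ for a finite subscheme $Z$, and then $H^1(\F_0,\cV)\cong H^2(\F_0,\cM)$, which a K\"unneth computation shows is nonzero in the regime $\chi({\bf v}(-F_i)) < 0$, contradicting $H^1(\F_0,\cV) = 0$. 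Hence $\chi({\bf v})\geq r({\bf v})+2$: case (3).

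\emph{Sufficiency of cases (1) and (2).} In case (1), $\pi_i^*\bigl(\OO_{\P^1}(a)^{r-m}\oplus\OO_{\P^1}(a+1)^m\bigr)$ is $F$-prioritary (the relevant $\Ext^2$ vanishes by Theorem \ref{thm-lineBundle}), globally generated, and without higher cohomology for $a\geq 0$. In case (2), say $\chi({\bf v}(-F_j))\geq 0$; then $\nu({\bf v}(-F_j))$ has positive intersection with $F_j$ and intersection $\geq -1$ with $F_{j'}$, so by Theorem \ref{thm-betti} the general $\cV$ has $H^1(\F_0,\cV(-F_j)) = 0$. Since all fibers $f$ of $\pi_j$ are linearly equivalent to $F_j$, the sequence $0\to\cV(-F_j)\to\cV\to\cV|_f\to 0$ shows $H^0(\F_0,\cV)\to H^0(f,\cV|_f)$ is surjective for every $f$. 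One can arrange $\cV$ to restrict to a balanced (hence, of positive degree, globally generated) bundle on \emph{every} $f$ --- e.g.\ by taking $\cV$ to be general elementary modifications (Lemma \ref{lem-elementaryprioritary}) of a direct sum of line bundles whose $F_{j'}$-coordinates are balanced and nonnegative --- whence $\cV$ is globally generated everywhere by Lemma \ref{lem-quotientofgg}.

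\emph{Sufficiency of case (3), and the main obstacle.} Here $\nu({\bf v})\cdot F_i > 0$, $\chi({\bf v}(-F_i)) < 0$ ($i = 1,2$), and $\chi({\bf v})\geq r({\bf v})+2$; this is the substantive case. The plan is a Lazarsfeld--Mukai construction: realize the general $\cV$ as the cokernel of a general injection $0\to\cM\to\OO_{\F_0}^{\chi({\bf v})}\to\cV\to 0$ and reduce global generation of $\cV$ to the positivity of $\cM^{\vee}$, i.e.\ to cohomology vanishings read off a Gaeta-type resolution of $\cV$ --- on $\F_0$ this has the symmetric form $0\to L(-E-F)^\alpha\to L(-E)^\beta\oplus L(-F)^\gamma\oplus L^\delta\to\cV\to 0$ supplied by Theorem \ref{thm-Gaeta}. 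The main obstacle is two-fold. First, Theorem \ref{thm-Gaeta} only yields a Gaeta-type resolution on $\F_0$ when $\Delta({\bf v})\geq 1/4$, so the range $0\leq\Delta({\bf v}) < 1/4$ must be handled separately; there $\nu({\bf v})$ is very constrained (by $\chi({\bf v})\geq r({\bf v})+2$ together with $\Delta({\bf v}) < 1/4$), and I expect to write $\cV$ down directly --- e.g.\ as a few general elementary modifications of a direct sum of line bundles --- and check global generation by hand. Second, one must pin down the correct twist $L$ (equivalently, solve the numerical inequalities of Lemma \ref{lem-Lexists}) and verify the exact sign conditions on $\cM$ and on $\alpha,\beta,\gamma,\delta$ forcing the needed vanishings; this is where the hypothesis $\chi({\bf v})\geq r({\bf v})+2$ gets consumed, mirroring Theorem \ref{thm-GGintro} and case (3) of Theorem \ref{thm-gg1}.
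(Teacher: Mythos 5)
Your architecture for cases (1) and (2), the use of symmetry in the two rulings to eliminate the analogue of case (4) of Theorem \ref{thm-gg1}, and your necessity arguments (including the $H^1(\F_0,\cV)\cong H^2(\F_0,\cM)$ computation ruling out $\chi({\bf v})=r({\bf v})+1$) all match the paper. The gap is in the sufficiency of case (3). You propose to extract the positivity of $\cM^*$ from a Gaeta-type resolution of $\cV$ itself, supplied by Theorem \ref{thm-Gaeta}, and you correctly note that this theorem requires $\Delta({\bf v})\geq 1/4$ on $\F_0$, leaving the range $0\leq \Delta({\bf v})<1/4$ to be ``handled separately.'' That residual range is genuinely nonempty in case (3) --- for instance $r({\bf v})=20$, $c_1({\bf v})=3E+3F$, $c_2({\bf v})=13$ gives $\Delta({\bf v})=89/400<1/4$, $\chi({\bf v})=22=r({\bf v})+2$, and $\chi({\bf v}(-F_i))=-1$ for $i=1,2$ --- and it contains infinitely many characters as $r$ grows, so ``write $\cV$ down directly and check by hand'' is a placeholder rather than an argument. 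Moreover, even where Theorem \ref{thm-Gaeta} applies, you do not explain how a resolution of $\cV$ (with an unspecified twist $L$) yields the three properties of $\cM$ demanded by Lemma \ref{lem-mukai}, in particular global generation of $\cM^*$.

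The paper's proof sidesteps both problems by applying the Gaeta machinery not to ${\bf v}$ but to the Serre-dual Lazarsfeld--Mukai character ${\bf m}^D$, and not via Theorem \ref{thm-Gaeta} but via Proposition \ref{prop-constructRes} with the \emph{specific} twist $L=\OO_{\F_e}$: Lemma \ref{lem-serreDualIneqs} checks the inequalities (\ref{lb-ineqs}) for this $L$ directly from $\chi({\bf v}(-F_i))<0$ and $\chi({\bf v})\geq r({\bf v})$, so no discriminant threshold ever enters. When $\chi({\bf m}^D(-E-F))<0$ one gets a resolution of $\cM^D$ by the line bundles $\OO_{\F_0}(-E-F)$, $\OO_{\F_0}(-E)$, $\OO_{\F_0}(-F)$ with no trivial summand (since $\chi({\bf m}^D)=0$), whose dual exhibits $\cM^*$ as a quotient of globally generated line bundles; when $\chi({\bf m}^D(-E-F))\geq 0$, $\cM^D$ is taken to be an explicit direct sum of those line bundles. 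To repair your case (3), replace the appeal to Theorem \ref{thm-Gaeta} by this direct application of Proposition \ref{prop-constructRes} to ${\bf m}^D$ with $L=\OO_{\F_0}$, and verify the sign conditions of Lemma \ref{lem-serreDualIneqs} (which is where $\chi({\bf v})\geq r({\bf v})$ and the nefness of $c_1({\bf v})$ are consumed); the hypothesis $\chi({\bf v})\geq r({\bf v})+2$ is then used only to guarantee $r({\bf m})\geq 2$ so that $\cM$ is a bundle arising as the kernel of a general map.
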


We note that an analogous result for $\P^2$ follows from the classification of globally generated characters on $\F_1$.  Let $H\subset \P^2$ be the class of a line.  We say a character ${\bf v}\in K(\P^2)$ is globally generated if a general $H$-prioritary sheaf is globally generated.  By Hirschowitz-Laszlo \cite{HirschowitzLaszlo} and G\"ottsche-Hirschowitz \cite{GottscheHirschowitz}, the stack $\cP_{\P^2,H}({\bf v})$ is irreducible and a general $\cV\in \cP_{\P^2,H}({\bf v})$ has only one nonzero cohomology group.  This result completes the classification of globally generated characters on $\P^2$ begun in Bertram-Goller-Johnson \cite{BGJ}.

\begin{corollary}\label{cor-P2gg}
Let ${\bf v}\in K(\P^2)$ be a Chern character of positive rank such that  $\Delta({\bf v}) \geq 0$ and $\mu({\bf v}) \geq 0$.  Then ${\bf v}$ is globally generated if and only if one of the following holds.
\begin{enumerate}
\item We have $\mu({\bf v}) = 0$ and ${\bf v} = r({\bf v}) \ch \OO_{\P^2}$.
\item We have $\mu({\bf v}) >0$ and $\chi({\bf v}(-1)) \geq 0$.
\item We have $\mu({\bf v}) >0$, $\chi({\bf v}(-1)) < 0$, and $\chi({\bf v}) \geq r({\bf v}) + 2$.
\item We have $\mu({\bf v}) > 0$, $\chi({\bf v}(-1)) < 0$, $\chi({\bf v}) = r({\bf v})+1$, and $${\bf v} = (r({\bf v})+1)\ch \OO_{\P^2}-\ch \OO_{\P^2}(-2).$$
\end{enumerate}
\end{corollary}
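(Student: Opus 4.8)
The plan is to reduce the corollary to the $e=1$ case of Theorem~\ref{thm-gg1} by pulling back along the birational contraction $\sigma\colon \F_1 \to \P^2$ of the $(-1)$-curve $E$. For ${\bf v}\in K(\P^2)$, set $\tilde{\bf v}:=\sigma^*{\bf v}\in K(\F_1)$. Since $\sigma^*\cO_{\P^2}(1)=\cO_{\F_1}(E+F)$ and $K_{\F_1}=\sigma^*K_{\P^2}+E$, and since $R^{>0}\sigma_*\cO_{\F_1}=0=R^{>0}\sigma_*\cO_{\F_1}(E)$, the projection formula yields a complete numerical dictionary: $r(\tilde{\bf v})=r({\bf v})$, $\Delta(\tilde{\bf v})=\Delta({\bf v})$, $\nu(\tilde{\bf v})=\sigma^*\nu({\bf v})$ (so $\nu(\tilde{\bf v})\cdot E=0$, $\nu(\tilde{\bf v})\cdot F=\mu({\bf v})$, and $\nu(\tilde{\bf v})$ is nef iff $\mu({\bf v})\geq 0$), $\chi(\tilde{\bf v})=\chi({\bf v})$, and $\chi(\tilde{\bf v}(-F))=\chi({\bf v}(-1))$ (using $\cO_{\F_1}(-F)=\sigma^*\cO_{\P^2}(-1)\otimes\cO_{\F_1}(E)$). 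Moreover $\sigma^*$ is injective on characters, and the distinguished character in case (4) of Theorem~\ref{thm-gg1} is exactly $\sigma^*\big((r({\bf v})+1)\ch\cO_{\P^2}-\ch\cO_{\P^2}(-2)\big)$, since $-2E-2F=\sigma^*(-2H)$.

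Next I would set up the sheaf-theoretic dictionary for a vector bundle $\cV$ on $\P^2$. First, $\sigma^*\cV$ is $F$-prioritary if and only if $\cV$ is $H$-prioritary: by Serre duality on each surface together with $K_{\F_1}+F=\sigma^*\cO_{\P^2}(-2)$ and the projection formula, one gets $\Ext^2_{\F_1}(\sigma^*\cV,\sigma^*\cV(-F))^*=\Hom_{\P^2}(\cV,\cV(-2))=\Ext^2_{\P^2}(\cV,\cV(-1))^*$. Second, $\sigma^*\cV$ is globally generated if and only if $\cV$ is: pullback carries the evaluation map of $\cV$ to that of $\sigma^*\cV$ and leaves $H^0$ unchanged, and since $\sigma$ is surjective a coherent sheaf pulls back to $0$ only if it vanishes, so surjectivity is both preserved and reflected. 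Third, $H^i(\F_1,\sigma^*\cV)=H^i(\P^2,\cV)$ for all $i$ because $R\sigma_*\sigma^*\cV=\cV$, so $\sigma^*\cV$ has no higher cohomology exactly when $\cV$ does.

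Combining these, I claim ${\bf v}$ is globally generated on $\P^2$ (equivalently, by irreducibility of $\cP_{\P^2,H}({\bf v})$ \cite{HirschowitzLaszlo,GottscheHirschowitz} and openness, that some $\cV\in\cP_{\P^2,H}({\bf v})$ is globally generated with no higher cohomology) if and only if $\tilde{\bf v}$ is globally generated on $\F_1$. The forward implication is immediate: $\sigma^*\cV$ is an $F$-prioritary globally generated bundle of character $\tilde{\bf v}$ with no higher cohomology. For the converse, the general $\cW\in\cP_{\F_1,F}(\tilde{\bf v})$ has $\cW|_E$ balanced of degree $0$ by Corollary~\ref{cor-balanced}, hence trivial; a vector bundle on $\F_1$ trivial on the contracted curve $E$ is the pullback of its pushforward $\sigma_*\cW$ (a locally free sheaf on $\P^2$, by a standard formal-functions computation showing $R^1\sigma_*\cW=0$ and that $\sigma^*\sigma_*\cW\to\cW$ is an isomorphism), so the general $\cW$ is $\sigma^*\cV$ for a bundle $\cV$ of character ${\bf v}$, and the dictionary shows $\cV\in\cP_{\P^2,H}({\bf v})$ is globally generated. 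Finally I would apply Theorem~\ref{thm-gg1} to $\tilde{\bf v}$ — whose hypotheses hold since $e=1$, $r(\tilde{\bf v})\geq 2$, $\Delta(\tilde{\bf v})\geq 0$, and $\nu(\tilde{\bf v})$ is nef — and translate its four cases through the dictionary. The constraint $\nu(\tilde{\bf v})\cdot E=0$ is automatic in cases (2)–(4) and, in case (1), forces the balanced character $(r-m)\ch\cO_{\F_1}(aF)+m\ch\cO_{\F_1}((a+1)F)$ to have $ra+m=0$, hence $a=m=0$ and $\tilde{\bf v}=r\ch\cO_{\F_1}$, i.e.\ ${\bf v}=r({\bf v})\ch\cO_{\P^2}$; this recovers precisely the four cases of the corollary.

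I expect the main obstacle to be the converse half of the global-generation equivalence, specifically the claim that a bundle on $\F_1$ trivial along $E$ descends along $\sigma$ to a bundle on $\P^2$; this is classical but must be carried out carefully, and it is what makes Corollary~\ref{cor-balanced} (balancedness of the restriction to $E$) the essential input. The remaining steps — the numerical dictionary, the Serre-duality identification of the prioritary conditions, and the case-by-case matching including the collapse of case (1) and the identification of the exceptional character in case (4) — are routine bookkeeping.
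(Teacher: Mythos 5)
Your proposal is correct and follows essentially the same route as the paper: pull back along the blowdown $\F_1\to\P^2$, use Corollary \ref{cor-balanced} to see that the general $F$-prioritary sheaf of the pulled-back character is trivial on $E$ and hence descends (the paper cites Walter's Lemma 6 for exactly your ``formal functions'' step), and then translate the four cases of Theorem \ref{thm-gg1} through the numerical dictionary. The only real difference is that you identify $F$-prioritariness of $\sigma^*\cV$ with $H$-prioritariness of $\cV$ by a direct Serre-duality computation, whereas the paper routes through the intermediate $(E+F)$-prioritary condition (established for the general sheaf by an explicit direct-sum-of-line-bundles plus elementary-modifications construction); both arguments are valid and yield the same dictionary.
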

\begin{proof}
Let $\pi :\FF_1\to \P^2$ be the blowdown map, and let ${\bf w} = \pi^*({\bf v}) \in K(\FF_1)$ and $r = r({\bf v}) = r({\bf w})$.  The result is clear if $r=1$, since then the pullback of a general sheaf in $\cP_{\P^2,H}({\bf v})$ is a general sheaf in $\cP_{\FF_1,F}({\bf v})$, and clearly ${\bf v}$ is globally generated if and only if ${\bf w}$ is.  So suppose $r\geq 2$.

The main technical difficulty is to compare the notions of $F$-prioritary sheaves on $\F_1$ and $H$-prioritary sheaves on $\P^2$.   We have $c_1({\bf w})\cdot E = 0$ and $\Delta({\bf w}) = \Delta({\bf v}) \geq 0$, so by Corollary \ref{cor-balanced} a general $\cW\in \cP_{\F_1,F}({\bf w})$ restricts to a trivial bundle on $E$: $\cW|_E \cong \OO_E^r$.  Furthermore, since $\cW$ is general it is actually $(E+F)$-prioritary.  This can be shown by an argument similar to the proof of Lemma \ref{lem-ds}: since $c_1({\bf w}) \cdot E = 0$, we can construct a direct sum of line bundles $$\cW' = \OO_{\F_1}(mE+mF)^a \oplus \OO_{\F_1}((m+1)E+(m+1)F)^b$$ having the same rank and $c_1$ as ${\bf w}$.  Then we compute $$2 r^2 \Delta({\cW}') = -ab \leq 0.$$ Since $\Delta({\bf w}) \geq 0$, we can obtain a sheaf of character ${\bf w}$ from $\cW'$ by repeated elementary modifications.  Since $\cW'$ is clearly $(E+F)$-prioritary, so is the general $\cW\in \cP_{\F_1,F}({\bf w})$ by Lemma \ref{lem-elementaryprioritary}.

Furthermore, $(E+F)$-prioritary vector bundles on $\F_1$ are automatically $F$-prioritary.  Indeed, if $\cW$ is an $(E+F)$-prioritary vector bundle, then the sequence $$0\to \cW(-E-F)\to \cW(-F)\to \cW(-F)|_E\to 0$$ yields $$\Ext^2(\cW,\cW(-E-F))\to \Ext^2(\cW,\cW(-F))\to \Ext^2(\cW,\cW(-F)|_E).$$ The first group vanishes by assumption, and the last vanishes since it is the $H^2$ of a sheaf supported on a curve (as $\cW$ is locally free).  Therefore $\cW$ is $F$-prioritary.

Now let $\cP' \subset \cP_{\F_1,F}({\bf w})$ be the open dense substack parameterizing $(E+F)$-prioritary vector bundles with restriction $\cW|_E \cong \OO_E^r$, and let $\cP'' \subset \cP_{\P^2,H}({\bf v})$ be the open dense substack parameterizing vector bundles.  If $\cW\in \cP'$, then $\pi_* \cW$ is locally free and $\pi^*\pi_* \cW \cong \cW$ by Walter \cite[Lemma 6]{Walter}.  Furthermore, we have an isomorphism $\Ext^2(\cW,\cW(-E-F)) \cong \Ext^2(\pi_*\cW,\pi_*\cW(-H))$, which shows that $\pi_* \cW$ is $H$-prioritary.  Thus there is an induced map $\pi_* : \cP' \to \cP''$.  On the other hand, if $\cV\in \cP''$ then $\pi_*\pi^* \cV \cong \cV$ since $\cV$ is a vector bundle and, furthermore, $\pi^*\cV$ is $(E+F)$-prioritary.  Therefore $\pi^* \cV \in \cP'$ and pullback gives an inverse map $\pi^* : \cP''\to \cP'$.
 
 Under the correspondence between $\cP'$ and $\cP''$, globally generated bundles correspond to globally generated bundles.  We conclude that ${\bf v}$ is globally generated if and only if ${\bf w}$ is globally generated, and the result follows from Theorem \ref{thm-gg1}.
 \end{proof}

Note that Corollary \ref{cor-P2gg} can also be proved more directly by mimicking the proof of Theorem \ref{thm-gg1}. We discuss each of the three main cases (1)-(3) of the classification in its own subsection.

\subsection{Pullbacks}  We begin the proof of the classification by analyzing the case $\nu({\bf v})\cdot F =0$.  In this case a globally generated bundle must be a pullback from $\P^1$, which imposes strong restrictions on the character.

\begin{proposition}\label{prop-ggStrictNef}
Suppose $\nu({\bf v})\cdot F =0$.  Then ${\bf v}$ is globally generated if and only if ${\bf v}$ is of the form $${\bf v} = (r({\bf v})-m)\ch \OO_{\F_e}(aF) + m\ch \OO_{\F_e}((a+1)F)$$ for some integers $a,m\geq 0$.
\end{proposition}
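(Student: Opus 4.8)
The plan is to prove both directions by reducing everything to the structure theory of vector bundles on $\P^1$ via the projection $\pi:\F_e\to\P^1$. First I would dispatch the easy direction: if ${\bf v}$ is of the stated form, then a general $\cV\in\cP_F({\bf v})$ can be taken to be $\OO_{\F_e}(aF)^{r-m}\oplus\OO_{\F_e}((a+1)F)^m$ itself, provided this bundle is $F$-prioritary (which it is, since $\Ext^2$ of it with its twist by $-F$ is a sum of $H^2$'s of line bundles $\OO_{\F_e}(cF)$ with $c\in\{-1,0,1\}$, all of which vanish by Theorem \ref{thm-lineBundle}). This bundle is globally generated exactly when $a\geq 0$, and $\nu$ is nef with $\nu\cdot F=0$, so this direction is immediate.

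For the converse, suppose $\cV\in\cP_F({\bf v})$ is general, globally generated, and has no higher cohomology, with $\nu({\bf v})\cdot F=0$. The key observation is that the restriction $\cV|_{F_0}$ to a general fiber $F_0\cong\P^1$ is a quotient of $\cV$ (from the sequence $0\to\cV(-F_0)\to\cV\to\cV|_{F_0}\to 0$), hence globally generated on $\P^1$; but $\deg(\cV|_{F_0})=c_1(\cV)\cdot F=0$, and a globally generated bundle of degree $0$ on $\P^1$ is trivial. So $\cV|_{F_0}\cong\OO_{F_0}^r$ for general $F_0$. I would then argue that $\pi_*\cV$ is locally free of rank $r$ and that the natural map $\pi^*\pi_*\cV\to\cV$ is an isomorphism: fiberwise this map is an isomorphism on general fibers by the triviality just established and by cohomology and base change; one must check there are no jumping fibers, which follows because $\cV$ is general in the irreducible stack $\cP_F({\bf v})$ and the locus where $\cV|_{F}$ is nontrivial is closed — alternatively one invokes Proposition \ref{prop-balanced} applied to a general fiber to see $\cV|_F$ is balanced, hence trivial, for \emph{every} fiber in an open dense set, and then a semicontinuity/Grauert argument upgrades this. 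With $\cV\cong\pi^*(\pi_*\cV)$ and $\pi_*\cV$ a globally generated vector bundle on $\P^1$, the classification of bundles on $\P^1$ writes $\pi_*\cV\cong\bigoplus_{i=1}^r\OO_{\P^1}(a_i)$ with all $a_i\geq 0$ (global generation). Since $\cV$ is general in $\cP_F({\bf v})$ and restricts to a balanced bundle on $E$ by Corollary \ref{cor-balanced} — and $\cV|_E\cong\pi_*\cV$ as bundles on $\P^1\cong E$ — the splitting type $(a_i)$ must be balanced, i.e. all $a_i\in\{a,a+1\}$ for some $a\geq 0$. Counting gives ${\bf v}=\ch\cV=\pi^*\ch(\pi_*\cV)=(r-m)\ch\OO_{\F_e}(aF)+m\ch\OO_{\F_e}((a+1)F)$ where $m=\#\{i:a_i=a+1\}$, as desired.

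The main obstacle I anticipate is the rigorous justification that $\pi^*\pi_*\cV\xrightarrow{\sim}\cV$ for the \emph{general} prioritary $\cV$ — that is, ruling out jumping fibers and ensuring $\pi_*\cV$ is a vector bundle. The cleanest route is: (i) $\Delta({\bf v})$ is unchanged by twisting, and since $\nu({\bf v})\cdot F=0$ we may twist ${\bf v}$ by a multiple of $F$ so that $\nu$ stays nef; (ii) use the explicit construction of a prioritary representative as in Lemma \ref{lem-ds}/Corollary \ref{cor-quad} and Corollary \ref{cor-exist} — but with line bundles of the form $\OO_{\F_e}(a_iF)$ only — to exhibit \emph{some} $\cV_0\in\cP_F({\bf v})$ which is literally a pullback, and then observe that being a pullback from $\P^1$, or more precisely the property "$\cV|_F$ is trivial for general $F$ and $R^1\pi_*\cV=0$", is an open condition satisfied by $\cV_0$, hence by the general member. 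Concretely, $\Delta({\bf v})\geq 0$ together with $\nu({\bf v})\cdot F=0$ forces $\ch_2({\bf v})\leq \frac12\nu({\bf v})^2 r = 0$ after choosing the twist appropriately, and in fact the pullback candidate $(r-m)\ch\OO(aF)+m\ch\OO((a+1)F)$ has $\Delta=0$; so one first checks the general $\cV$ has $\Delta({\bf v})=0$ is NOT forced, and instead handles $\Delta>0$ by noting that the elementary modifications in Corollary \ref{cor-exist} destroy global generation (they drop $h^0$ and create torsion-killing quotients) — more precisely, if $\Delta({\bf v})>0$ the general $\cV$ has $\chi(\cV|_{F})=\chi(\cV)/(\text{something})$... this is where care is needed, and I would instead argue directly: $\pi_*\cV$ is globally generated on $\P^1$ and $R^1\pi_*\cV$ has finite length; global generation of $\cV$ forces $R^1\pi_*\cV=0$ and $\cV=\pi^*\pi_*\cV$ by a fiberwise global-generation argument, and then $\Delta({\bf v})=\Delta(\pi^*\pi_*\cV)=0$ comes out as a \emph{consequence}. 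So the real content is the implication "globally generated with $\nu\cdot F=0$ $\Rightarrow$ pullback", which I expect to handle by the restriction-to-fiber argument above combined with cohomology and base change; the delicate point is upgrading "trivial on the general fiber" to "trivial on every fiber," for which global generation (not just the numerics) is essential and is the crux of the argument.
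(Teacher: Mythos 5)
Your approach is essentially the paper's: restrict to fibers to get triviality, push forward to $\P^1$, and use balancedness of $\cV|_E$ (Corollary \ref{cor-balanced}) to pin down the splitting type. But the step you flag as ``the crux'' --- upgrading triviality on the general fiber to triviality on \emph{every} fiber --- is not actually a gap, and your own first observation already closes it: for \emph{any} fiber $F_p$, the sequence $0\to\cV(-F_p)\to\cV\to\cV|_{F_p}\to 0$ exhibits $\cV|_{F_p}$ as a quotient of $\cV$, hence globally generated by Lemma \ref{lem-quotientofgg}, hence (being of degree $0$ on $\P^1$) trivial. No generality of the fiber is used, so there are no jumping fibers to rule out; this is exactly how the paper argues. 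The workarounds you propose in their place would in fact fail: semicontinuity of the splitting type and the closedness of the jumping locus both go the wrong way (they let the restriction degenerate on special fibers, they do not prevent it), and the digression on $\Delta$ and elementary modifications is not needed --- $\Delta({\bf v})=0$ falls out at the end once $\cV$ is known to be a pullback. One further small difference: rather than cohomology and base change, the paper obtains $\cV\cong\pi^*(\pi_*(\cV|_E))$ by twisting down by $E$, noting $\cV(-E)|_F\cong\OO_F(-1)^r$ kills both $\pi_*$ and $R^1\pi_*$ so that $\pi_*\cV\cong\pi_*(\cV|_E)$, and then citing \cite[Lemma 5]{Walter}; your route via base change would also work once fiberwise triviality is established everywhere, but the identification $\pi_*\cV\cong\pi_*(\cV|_E)$ is what lets the balancedness of $\cV|_E$ be transported directly to the answer.
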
 
\begin{proof}
$(\Leftarrow)$ The bundles $$\cV = \OO_{\F_e}(aF)^{r-m} \oplus \OO_{\F_e}((a+1)F)^m$$ are $F$-prioritary, globally generated, and have no higher cohomology, so their Chern characters are globally generated.

$(\Rightarrow)$ Suppose $\cV\in \cP_F({\bf v})$ is general and globally generated.  The result is clear if $r({\bf v}) = 1$, so suppose $r({\bf v}) \geq 2$.  Then $\cV$ is a vector bundle. 
  The restriction $\cV|_{F}$ to any fiber $F\cong \P^1$ of $\pi:\F_e \to \P^1$ has degree $0$.  If any factor of $\cV|_F$ has negative degree, then $\cV|_F$ is not globally generated, a contradiction.  Therefore $\cV|_F \cong \OO_F^{r({\bf v})}$ is trivial on each fiber.
 Consider the exact sequence $$0 \longrightarrow \cV(-E) \longrightarrow \cV \longrightarrow \cV|_E \longrightarrow 0$$ and apply $\pi_*$. Since $\cV(-E)|_F \cong \OO_F(-1)^{r({\bf v})}$ for any fiber $F$, we conclude that $\pi_* \cV(-E) = R^1 \pi_* \cV(-E) =0$ and $\pi_* \cV \cong \pi_* (\cV|_E)$.  Hence, by \cite[Lemma 5]{Walter},  $\cV \cong \pi^*(\pi_* (\cV|_E))$.  Since $\cV|_E$ is balanced by Corollary \ref{cor-balanced}, it follows that $$\cV\cong \OO_{\F_e}(aF)^{r({\bf v})-m} \oplus \OO_{\F_e}((a+1)F)^m$$ for some integers $m\geq 0$ and $a\in \Z$.  As $\cV$ is globally generated, $a\geq 0$.
\end{proof}

\subsection{Restriction to a fiber} The case where $\chi({\bf v}(-F)) \geq 0$ and $\nu({\bf v})\cdot F > 0$ is the simplest to analyze.

\begin{proposition}\label{prop-ggPositive}
Suppose $\chi({\bf v}(-F))\geq 0$ and $\nu({\bf v})\cdot F >0$.  Then ${\bf v}$ is globally generated.
\end{proposition}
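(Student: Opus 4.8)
The plan is to produce a single $F$-prioritary vector bundle $\cV$ of character ${\bf v}$ that is globally generated and has no higher cohomology; by the openness remark at the start of \S\ref{sec-gg} this suffices. The natural place to get global generation is by restricting to a general fiber $F\cong \P^1$, so I will aim for a $\cV$ whose restriction to the general fiber is globally generated and which has enough sections over $\F_e$ to see all of these.

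First I would record the cohomological picture. Since $\nu({\bf v})$ is nef we have $\nu({\bf v})\cdot F>0$ and $\nu({\bf v})\cdot E\ge 0$, so $\nu({\bf v}(-F))\cdot F\ge 0$ and $\nu({\bf v}(-F))\cdot E\ge -1$; hence Theorem~\ref{thm-betti}(5) applies to both ${\bf v}$ and ${\bf v}(-F)$, giving that a general $\cV\in\cP_F({\bf v})$ and $\cV(-F)$ are each nonspecial, with $H^2=0$ and $H^1=0$ (as $\chi({\bf v}(-F))\ge 0$ forces $\chi({\bf v})\ge \chi({\bf v}(-F))\ge 0$, and nonspeciality then kills $H^1$). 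So a general $\cV\in\cP_F({\bf v})$ already has no higher cohomology; it remains only to arrange global generation at a single point of the stack. Next, using the restriction sequence $0\to \cV(-F)\to \cV\to \cV|_F\to 0$ for a fiber $F$, the vanishing $H^1(\F_e,\cV(-F))=0$ shows $H^0(\F_e,\cV)\to H^0(F,\cV|_F)$ is surjective. Therefore $\cV$ is globally generated along a fiber $F$ as soon as $\cV|_F$ is a globally generated bundle on $\P^1$, i.e. as soon as $\cV|_F$ has all summands of nonnegative degree. By Proposition~\ref{prop-balanced} (via $\cP_F=\cP_{\OO(F)}$), for a general $\cV\in\cP_F({\bf v})$ the restriction $\cV|_F$ is balanced of degree $\nu({\bf v})\cdot F\cdot r({\bf v})=c_1({\bf v})\cdot F>0$; being balanced of positive total degree on $\P^1$, each summand has degree $\ge 0$, hence $\cV|_F$ is globally generated. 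Since this holds for the general fiber in the pencil $|F|$ and these fibers cover $\F_e$, the general $\cV\in\cP_F({\bf v})$ is globally generated at every point, hence globally generated.

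The only genuine subtlety, and the step I would be most careful about, is making sure all three "general" requirements — membership in $\cP_F({\bf v})$ as a vector bundle, the vanishing $H^1(\cV(-F))=0$, and balancedness of $\cV|_F$ for the chosen fiber — hold simultaneously. Each is an open dense condition on the irreducible stack $\cP_F({\bf v})$ (openness of $H^1=0$ is standard semicontinuity; balancedness of the restriction to a fixed general fiber is Proposition~\ref{prop-balanced}; vector-bundleness of the general member is Theorem~\ref{thm-Walter} since $r({\bf v})\ge 2$), so their intersection is again dense and nonempty — recall $\cP_F({\bf v})$ is nonempty by Corollary~\ref{cor-exist}. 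One small point: balancedness is asserted for a \emph{general} fiber, but since the cohomology and divisor classes in play are translation-invariant along the pencil and the "bad locus" of non-globally-generated fibers is closed, it actually suffices that a single general fiber be balanced together with the surjectivity $H^0(\F_e,\cV)\twoheadrightarrow H^0(F,\cV|_F)$; the evaluation map $H^0(\F_e,\cV)\otimes\OO_{\F_e}\to\cV$ is then surjective at a general point of $\F_e$, and being a cokernel-support condition its non-surjectivity locus is closed, so surjectivity on a dense open set already gives global generation of a \emph{general} $\cV$, which is all we need. This completes the proof.
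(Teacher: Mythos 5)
Your overall strategy is the paper's: derive $H^1(\F_e,\cV(-F))=0$ from $\chi({\bf v}(-F))\geq 0$ via Theorem \ref{thm-betti}, use the restriction sequence to see that $H^0(\F_e,\cV)\to H^0(F,\cV|_F)$ is surjective, and conclude from global generation of $\cV|_F$. The gap is in the last step. Proposition \ref{prop-balanced} only gives balancedness of $\cV|_F$ for a \emph{general} fiber $F$, so your argument only shows that the evaluation map $H^0(\F_e,\cV)\otimes\OO_{\F_e}\to\cV$ is surjective on the open dense union of the good fibers. That is not global generation: the definition requires surjectivity at \emph{every} point, and a nonzero cokernel supported on a closed subset (here, a finite union of special fibers over which $\cV$ restricts unbalancedly, say with an $\OO_{\P^1}(-1)$ summand) is still an obstruction. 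Your closing sentence conflates ``the general $\cV$ in the stack'' with ``a general point of $\F_e$'': genericity in the moduli stack does not let you discard the bad points of the surface for that fixed $\cV$.

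The paper closes exactly this gap by citing \cite[Lemmas 3 and 4]{Walter}, which show that when $\nu({\bf v})\cdot F>0$ a general $F$-prioritary sheaf restricts to a globally generated bundle on \emph{every} fiber, not just the general one --- this is where the prioritary condition $\Ext^2(\cV,\cV(-F))=0$ does real work, since it controls the jumping of the restriction over all of $\P^1$ simultaneously. With that input, your restriction-sequence argument goes through verbatim at every point $p\in\F_e$ using the fiber $F_p$ through $p$. So the fix is to replace the appeal to Proposition \ref{prop-balanced} (general fiber only) by this stronger statement about all fibers.
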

\begin{proof}
First suppose $r({\bf v})\geq 2$, so that the general $\cV\in \cP_F({\bf v})$ is a vector bundle.  Since $\nu({\bf v})\cdot F >0$, the bundle $\cV$ restricts to a globally generated vector bundle on every fiber $F$ of the projection $\pi:\F_e\to \P^1$ by \cite[Lemmas 3 and 4]{Walter}.  Let $\cV$ be such a bundle which also has no higher cohomology. Let $p\in \F_e$ be any point, and let $F_p:=\pi^{-1}(\pi(p))$ be the    fiber through $p$.  Since $\chi({\bf v}(-F))\geq 0$, as explained in the introduction to this section, we know that $H^1(\F_e,\cV(-F))=0$.  Then the restriction sequence $$0\to \cV(-F)\to \cV \to \cV|_{F_p}\to 0$$ shows that $H^0(\F_e,\cV)\to H^0(F_p,\cV|_{F_p})$ is surjective.  Since $\cV|_{F_p}$ is globally generated, this implies that $\cV$ is globally generated at every point of $F_p$.  Therefore $\cV$ is globally generated, and so is ${\bf v}$.

If $r({\bf v}) = 1$, suppose the general sheaf of character ${\bf v}$ is of the form $L \te I_Z$, where $L$ is a nef line bundle and $Z\subset X$ is a collection of $n$ general points.  For $p\in \F_e$ we have an exact sequence of one of the following two forms, depending on whether or not the fiber $F_p$ contains a point of $Z$:
$$0\to L(-F) \te I_Z \to L \te I_Z \to L|_{F_p} \to 0 $$
$$0\to L(-F)\te I_{Z'} \to L \te I_Z \to L|_{F_p}(-1) \to 0.$$
Here $Z' \subset Z$ consists of $n-1$ points, with the $n$th point of $Z$ lying on $F_p$.  Note that both $L(-F)\te I_Z$ and $L(-F)\te I_{Z'}$ have no higher cohomology, and $L|_{F_p}$ and $L|_{F_p}(-1)$ are both globally generated.  Therefore, $L\te I_Z$ is globally generated except possibly at points in $Z$.  So suppose $p\in Z$.  In this case, we can construct two curves in $\FF_e$:
\begin{enumerate} \item There is a curve $C$ of class $L$ that contains $Z$ and intersects $F_p$ transversely.
\item Since $\chi(L(-F)\te I_{Z'})>0$, there is a curve $D$ of class $L(-F)$ which contains $Z'$ and does not contain $p$.  (Note that the points in $Z$ are general, so impose the expected number of conditions on sections of $L(-F)$.)
\end{enumerate}
Then the curves $C$ and $D+F$ give two sections of $L$ which contain $Z$ and intersect transversely at $p$.  Therefore $L\te I_Z$ is globally generated at $p$.
\end{proof}

\begin{remark}
When $e=0$, we find by symmetry that results analogous to Propositions \ref{prop-ggStrictNef} and \ref{prop-ggPositive} hold for the opposite ruling $E$.  So, going forward, we may assume $\chi({\bf v}(-E))<0$ and $\nu({\bf v})\cdot E >0$ when $e=0$.
\end{remark}

\subsection{Lazarsfeld-Mukai bundles} It remains to classify globally generated characters ${\bf v}$ with $\chi({\bf v}(-F)) < 0$ and $\nu({\bf v})\cdot F > 0$.  Throughout this section, we put $${\bf m} = \chi({\bf v})\ch(\OO_{\F_e}) -{\bf v}.$$  Thus, if ${\bf v}$ is globally generated, then ${\bf m}$ is the character of the Lazarsfeld-Mukai bundle $$0\to \cM \to \OO_{\F_e}^{\chi({\bf v})} \to \cV \to 0$$ which is the kernel of the canonical evaluation map.  The next lemma is the main tool we use in this case.

\begin{lemma}\label{lem-mukai}
Suppose $\chi({\bf v}(-F)) < 0$.  Then ${\bf v}$ is globally generated if and only if there is a vector bundle $\cM$ of character ${\bf m}$ such that 
\begin{enumerate}
\item $\cM$ has no cohomology,
\item $h^1(\F_e,\cM(-F))=0$, and 
\item$\cM^*$ is globally generated.
\end{enumerate}
\end{lemma}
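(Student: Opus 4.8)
The plan is to establish the two implications separately, keeping careful track of the fact that $\cV$ and $\cM$ have a symmetric relationship via the defining sequence $0\to \cM\to \OO_{\F_e}^{\chi({\bf v})}\to \cV\to 0$. For the forward direction, suppose ${\bf v}$ is globally generated, so by the discussion at the start of \S\ref{sec-gg} a general $\cV\in\cP_F({\bf v})$ is globally generated with no higher cohomology; in particular $\chi({\bf v})=h^0(\F_e,\cV)\geq 0$, so the evaluation map $\OO_{\F_e}^{\chi({\bf v})}\to \cV$ is a surjection of the asserted form and we let $\cM$ be its kernel. First I would read off properties (1) and (2): taking the long exact sequence in cohomology of the defining sequence, and of its twist by $-F$, together with $H^i(\F_e,\cV)=0$ for $i>0$, $h^0(\F_e,\cV)=\chi({\bf v})$, the fact that $H^0(\F_e,\cV)\otimes\OO_{\F_e}\to H^0(\F_e,\cV)$ is an isomorphism on sections, and the hypothesis $\chi({\bf v}(-F))<0$ (which by Theorem \ref{thm-betti}, since $\nu({\bf v}(-F))$ has $\nu({\bf v}(-F))\cdot F>0$ and $\nu({\bf v}(-F))\cdot E\geq -1$, forces $H^0(\F_e,\cV(-F))=0$ and $h^1(\F_e,\cV(-F))=-\chi({\bf v}(-F))>0$). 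Chasing these sequences gives $H^i(\F_e,\cM)=0$ for all $i$ and, from the $-F$ twisted sequence, $H^1(\F_e,\cM(-F))\hookrightarrow H^1(\F_e,\OO_{\F_e}^{\chi({\bf v})}(-F))=0$ using $h^1(\F_e,\OO_{\F_e}(-F))=0$ from Theorem \ref{thm-lineBundle}. For (3), dualize the defining sequence: since $\cV$ is a vector bundle the sequence stays exact, $0\to \cV^*\to \OO_{\F_e}^{\chi({\bf v})}\to \cM^*\to 0$, exhibiting $\cM^*$ as a quotient of the globally generated sheaf $\OO_{\F_e}^{\chi({\bf v})}$, hence globally generated by Lemma \ref{lem-quotientofgg}.

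For the converse, suppose $\cM$ is a vector bundle of character ${\bf m}$ satisfying (1)--(3). Because $\cM^*$ is globally generated of rank $\geq 1$ it has $h^0(\F_e,\cM^*)\geq r(\cM)$, and I claim we may choose a surjection $\OO_{\F_e}^{N}\to \cM^*$ with $N=\chi({\bf v})$: indeed, $\chi({\bf v})=\chi(\OO_{\F_e}^{\chi({\bf v})})-\chi(\cM)=\chi(\cM^*)$ wait—more carefully, $\chi({\bf m})=\chi({\bf v})\chi(\OO_{\F_e})-\chi({\bf v})=\chi({\bf v})-\chi({\bf v})=0$ so $\chi(\cM)=0$, and then $\chi(\cM^*)=\chi(\cM)-K_{\F_e}\cdot c_1(\cM)+\dots$ need not be $\chi({\bf v})$ in general, so instead I would argue directly: pick a general surjection $q:\OO_{\F_e}^{\chi({\bf v})}\to \cM^*$ from the evaluation map composed with a projection, using that $\cM^*$ is globally generated and that $\chi({\bf v})\geq h^0(\F_e,\cM^*)$ must be verified—this is the point requiring care. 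Granting a surjection $q$ whose kernel $\cW$ has $\ch\cW={\bf v}$ (forced numerically since $\ch\cM^*$ and ${\bf m}$ determine everything once we know the rank drop is $r({\bf v})$), dualize to get $0\to \cM\to \OO_{\F_e}^{\chi({\bf v})}\to \cW^*\to 0$. Then $\cW^*$ has character ${\bf v}$, and from the long exact sequence together with $H^i(\F_e,\cM)=0$ we get $h^0(\F_e,\cW^*)=\chi({\bf v})$, $h^1(\F_e,\cW^*)=0$, $h^2(\F_e,\cW^*)=0$; moreover $\cW^*$ is globally generated as a quotient of $\OO_{\F_e}^{\chi({\bf v})}$. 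Finally, to conclude that ${\bf v}$ itself is globally generated I must check $\cW^*$ is $F$-prioritary: this follows because $\cM$ (being a general sheaf—or by the argument that $H^2(\F_e,\cM\otimes\cM^*(-F))$ vanishes, which I would arrange using condition (2) and a spectral-sequence or direct $\Ext$ computation) can be taken $F$-prioritary, and $F$-prioritariness is preserved under the relevant operations; alternatively, it suffices to produce a single $F$-prioritary bundle with character ${\bf v}$ that is globally generated with no higher cohomology, and $\cW^*$ is such a bundle provided $\cM$ was chosen $F$-prioritary, which by Theorem \ref{thm-Walter} we may do since $\cP_F({\bf m})$ is irreducible. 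Then the substack of $\cP_F({\bf v})$ of globally generated bundles with no higher cohomology is open and nonempty, so ${\bf v}$ is globally generated.

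The main obstacle I anticipate is the bookkeeping in the converse direction: one must verify that a general surjection $\OO_{\F_e}^{\chi({\bf v})}\to \cM^*$ exists \emph{and} has kernel of the correct character ${\bf v}$ \emph{and} has $F$-prioritary dual, and that the number of copies of $\OO_{\F_e}$ is exactly $\chi({\bf v})$ rather than merely $\geq h^0(\F_e,\cM^*)$. Showing $h^0(\F_e,\cM^*)=\chi({\bf v})$—equivalently that $\cM^*$ has no higher cohomology either—should come from Serre duality applied to condition (1): $H^i(\F_e,\cM^*)=H^{2-i}(\F_e,\cM\otimes K_{\F_e})^*$, which would follow if we additionally knew $\cM\otimes K_{\F_e}$ has no cohomology; this is not literally condition (1) or (2), so the honest route is probably to work with the general member of $\cP_F({\bf m})$ and invoke Theorem \ref{thm-betti} to pin down all Betti numbers of $\cM^*$ from its character, then check the numerics force $h^0(\F_e,\cM^*)=\chi({\bf v})$. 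I would organize the proof so that conditions (1)--(3) are shown to be exactly the open conditions on $\cM\in\cP_F({\bf m})$ needed to run this construction, which also explains why they are the natural hypotheses to state.
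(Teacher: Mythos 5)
Your forward direction is correct and is essentially the paper's (the paper compresses it to one sentence, and your chase of the two long exact sequences is the right way to fill it in). The converse, however, contains two genuine gaps, both of which you flag but neither of which you close.

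The first is the construction itself. You want a surjection $\OO_{\F_e}^{\chi({\bf v})}\to\cM^*$ and then to dualize, and you correctly notice it is unclear that $\chi({\bf v})$ sections suffice to generate $\cM^*$; your proposed fix---pin down $h^0(\F_e,\cM^*)$ via Theorem \ref{thm-betti}---does not apply, because that theorem computes Betti numbers of a \emph{general} member of $\cP_F({\bf m})$, while the hypothesis hands you one specific bundle $\cM$ about which you only know (1)--(3). The paper runs the construction the other way: take a \emph{general} map $\phi:\cM\to\OO_{\F_e}^{\chi({\bf v})}$, i.e.\ $\chi({\bf v})$ general sections of $\cM^*$. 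Since $\cM^*$ is globally generated and the expected cokernel rank is $\chi({\bf v})-r(\cM)=r({\bf v})\geq 2$, \cite[Proposition 2.6]{HuizengaJAG} gives that $\phi$ is injective with locally free cokernel $\cV$, which automatically has character ${\bf v}$; global generation and the vanishing of higher cohomology of $\cV$ then fall out of the sequence as you say.

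The second, more serious, gap is the $F$-prioritariness of $\cV$. Neither of your suggested routes works: you cannot replace $\cM$ by a general member of the irreducible stack $\cP_F({\bf m})$, because condition (3) is not an open condition (the paper explicitly warns that global generation is not open, and you do not know $\cM^*$ has no higher cohomology, which is what would make it open), so a general member need not satisfy (1)--(3); and there is no general principle by which a quotient of $\OO_{\F_e}^{N}$ by an $F$-prioritary subsheaf is again $F$-prioritary. The paper instead argues directly: applying $\Hom(\cV,-)$ to the sequence twisted by $-F$ exhibits $\Ext^2(\cV,\cV(-F))$ as a quotient of a direct sum of copies of $\Ext^2(\cV,\OO_{\F_e}(-F))\cong H^0(\F_e,\cV(K_{\F_e}+F))^*$, which injects into $H^0(\F_e,\cV(-F))^*$ because $-F-(K_{\F_e}+F)=2E+eF$ is effective; and $H^0(\F_e,\cV(-F))=0$ precisely because $H^0(\F_e,\OO_{\F_e}(-F))=0$ and $H^1(\F_e,\cM(-F))=0$. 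This is the actual role of hypothesis (2) in the converse, which your sketch gestures at (``a spectral-sequence or direct $\Ext$ computation'') but never carries out.
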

\begin{proof}
($\Rightarrow$) Suppose ${\bf v}$ is globally generated.  Then there is a globally generated (torsion-free) sheaf $\cV\in \cP_F({\bf v})$ which has no higher cohomology.  Furthermore, since $\chi(\cV(-F))<0$ we can assume $\cV(-F)$ only has $h^1$.  Then the kernel $\cM$ of the canonical evaluation map $$0\to \cM \to \OO_{\F_e}^{\chi({\bf v})} \to \cV\to 0$$ has the required properties.

($\Leftarrow$) Conversely, suppose there is a vector bundle $\cM$ of character ${\bf m}$ with properties (1)-(3).  Let $\cV$ be a sheaf defined as the cokernel of a general map $\phi:\cM\to \OO_{\F_e}^{\chi({\bf v})}$.  Since $r({\bf v}) \geq 1$ and $\cM^*$ is globally generated, the map $\phi$ is injective and $\cV$ is torsion-free (see \cite[Proposition 2.6]{HuizengaJAG}, and the proof of Theorem \ref{thm-resComplete}).  The sequence $$0\to \cM \fto\phi \OO_{\F_e}^{\chi({\bf v})}\to \cV\to 0$$ shows that $\cV$ is globally generated, and $\cV$ has no higher cohomology since $\cM$\ and $\OO_{\F_e}$ have no higher cohomology.

Finally, we must check that $\cV$ is $F$-prioritary.  But $\Ext^2(\cV,\cV(-F))$ is a quotient of a direct sum of copies of $$\Ext^2(\cV,\OO_{\F_e}(-F)) \cong \Hom(\OO_{\F_e}(-F),\cV(K_{\F_e}))^*\cong H^0(\F_e,\cV(K_{\F_e}+F))^*.$$ There is an injection $\cV(K_{\F_e}+F)\to \cV(-F)$ since $-F-(K_{\F_e}+F) = 2E +eF$ is effective, and $H^0(\F_e,\cV(-F))=0$ since we are assuming $H^1(\F_e,\cM(-F))=0$.  Therefore $H^0(\F_e,\cV(K_{\F_e}+F))=0$ and $\Ext^2(\cV,\cV(-F))=0$.
\end{proof}

The remainder of the classification of globally generated characters ${\bf v}$ with $ \chi({\bf v}) \leq r({\bf v}) + 1$ follows quickly; there is only one such character that was not already studied in the previous subsections.

\begin{corollary}
Suppose $\chi({\bf v}(-F)) < 0$, $\nu({\bf v})\cdot F > 0$, and $\chi({\bf v}) \leq r({\bf v})+1$.  If $e=0$, then further assume $\chi({\bf v}(-E)) <0$ and $\nu({\bf v})\cdot E > 0$.  Then ${\bf v}$ is globally generated if and only if $e=1$ and $${\bf v} = (r({\bf v})+1)\ch\OO_{\F_1}-\ch \OO_{\F_1}(-2E-2F).$$
\end{corollary}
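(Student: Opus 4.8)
The plan is to deduce both directions from Lemma~\ref{lem-mukai}, whose standing hypothesis $\chi({\bf v}(-F))<0$ is in force. Write $c_1({\bf v})=pE+qF$; since $\nu({\bf v})$ is nef with $\nu({\bf v})\cdot F>0$, we automatically have $p\geq 1$ and $q\geq pe$. At the outset I would record, via Lemma~\ref{lem-eulerchar}, the identity $\chi({\bf v}(-F))=\chi({\bf v})-p-r({\bf v})$, and (for $e=0$) the companion identity $\chi({\bf v}(-E))=\chi({\bf v})-q-r({\bf v})$.

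\emph{``Only if''.} Suppose ${\bf v}$ is globally generated. By Lemma~\ref{lem-mukai} there is a vector bundle $\cM$ of character ${\bf m}=\chi({\bf v})\ch \OO_{\F_e}-{\bf v}$, of rank $r(\cM)=\chi({\bf v})-r({\bf v})\leq 1$. If $\cM=0$ then ${\bf v}=\chi({\bf v})\ch \OO_{\F_e}$ and $c_1({\bf v})=0$, contradicting $\nu({\bf v})\cdot F>0$; so $r(\cM)=1$, hence $\chi({\bf v})=r({\bf v})+1$ and $\cM$ is a line bundle, necessarily $\cM\cong\OO_{\F_e}(-pE-qF)$ (comparing $c_1$), with $\chi(\cM)=\chi({\bf m})=0$. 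Then $\chi({\bf v}(-F))=1-p$, so the hypothesis $\chi({\bf v}(-F))<0$ forces $p\geq 2$. Expanding $\chi(\OO_{\F_e}(-pE-qF))$ with Theorem~\ref{thm-lineBundle}(1) and factoring gives $(1-p)\bigl((1-q)+\tfrac{ep}{2}\bigr)=0$; as $p\geq 2$, this forces $q=1+\tfrac{ep}{2}$, and substituting into $q\geq pe$ gives $ep\leq 2$. If $e=0$ this says $q=1$, whence $\chi({\bf v}(-E))=1-q=0$, contradicting the extra $\P^1\times\P^1$ hypothesis; therefore $e\geq 1$, and $ep\leq 2$ with $p\geq 2$ then pins down $e=1$ and $p=q=2$. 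Substituting back, ${\bf m}=\ch \OO_{\F_1}(-2E-2F)$ and ${\bf v}=(r({\bf v})+1)\ch \OO_{\F_1}-\ch \OO_{\F_1}(-2E-2F)$.

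\emph{``If''.} Now let $e=1$ and ${\bf v}=(r+1)\ch \OO_{\F_1}-\ch \OO_{\F_1}(-2E-2F)$ with $r=r({\bf v})\geq 2$. Since $\OO_{\F_1}(-2E-2F)=\OO_{\F_1}(-2E-(e+1)F)$ has no cohomology (see the remark following Theorem~\ref{thm-lineBundle}), we get $\chi({\bf v})=r+1$ and ${\bf m}=\ch \OO_{\F_1}(-2E-2F)$, and then $\chi({\bf v}(-F))=(r+1)-2-r=-1<0$, so Lemma~\ref{lem-mukai} applies. It remains to verify the three conditions of that lemma for $\cM=\OO_{\F_1}(-2E-2F)$: (1) $\cM$ has no cohomology, as just noted; (2) $\cM(-F)=\OO_{\F_1}(-2E-3F)=\OO_{\F_1}(K_{\F_1})$, so $h^1(\F_1,\cM(-F))=h^1(\F_1,\OO_{\F_1})=0$ by Serre duality; and (3) $\cM^{*}=\OO_{\F_1}(2E+2F)=\pi^{*}\OO_{\P^2}(2)$ is globally generated, where $\pi\colon\F_1\to\P^2$ is the blowdown and $F=\pi^{*}H-E$. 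Lemma~\ref{lem-mukai} then shows that ${\bf v}$ is globally generated, using $r({\bf v})\geq 2$.

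Given Lemma~\ref{lem-mukai}, the whole argument is finite bookkeeping with Theorem~\ref{thm-lineBundle} and Lemma~\ref{lem-eulerchar}, and I do not anticipate a serious obstacle. The one point needing genuine care is the case $e=0$: on $\P^1\times\P^1$ the divisor $E$ is itself a fiber, and it is exactly the extra standing hypotheses $\nu({\bf v})\cdot E>0$ and $\chi({\bf v}(-E))<0$ that contradict the value $q=1$ forced by $\chi(\cM)=0$---this is how the classification excludes $e=0$ and isolates $\F_1$.
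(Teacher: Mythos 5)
Your proof is correct and follows essentially the same route as the paper's: both directions reduce to Lemma \ref{lem-mukai} and to identifying the rank-one Lazarsfeld--Mukai bundle $\cM$ as a specific line bundle $\OO_{\F_e}(-pE-qF)$. The only cosmetic difference is in how $(p,q)$ is pinned down---you use $\chi(\cM)=0$ together with nefness of $c_1({\bf v})$ (and, for $e=0$, the hypothesis $\chi({\bf v}(-E))<0$), whereas the paper uses $h^2(\F_e,\cM)=0$ together with global generation of $\cM^*$---but the two computations force the same conclusion.
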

\begin{proof}
($\Rightarrow$)  Suppose ${\bf v}$ is globally generated, and let $\cV$ be a globally generated sheaf of character ${\bf v}$ with no higher cohomology.  Then $\chi({\bf v}) = h^0(\F_e,\cV)$, so $\chi({\bf v}) \geq r({\bf v})$.  If $\chi({\bf v}) = r({\bf v})$ then we must have $\cV \cong \OO_{\F_e}^{r({\bf v})}$, but then $\nu({\bf v})\cdot F = 0$.  So, going forward we may assume $\chi({\bf v}) = r({\bf v})+1$.   By Lemma \ref{lem-mukai}, the kernel $\cM$ of a general evaluation map $$0\to \cM\to \OO_{\F_e}^{\chi({\cV})}\to \cV \to 0$$ is a line bundle with no cohomology such that $H^1(\F_e,\cM(-F))=0$ and $\cM^*$ is globally generated.  Suppose $\cM = \OO_{\F_e}(-aE-bF)$.  Since $\cM^*$ is globally generated, we have $a \geq 0$ and $b\geq ae$.  Since $\nu(\cV)\cdot F > 0$, we further have $a > 0$.  If $a = 1$, then $\chi(\cM(-F)) = \chi(\OO_{\F_e}(-F)) = 0$ and so $\chi(\cV(-F)) = 0$, a contradiction.  Therefore $a \geq 2$ and so $b\geq 2e$.  Then $$H^2(\F_e,\cM) \cong H^0(\F_e,\cM^*(K_{\F_e}))=H^0(\F_e,\OO_{\F_e}((a-2)E+(b-e-2)F)),$$ and since $\cM$ has no cohomology we must have $b < e+2$.  But $b\geq 2e$ and $b < e+2$ together imply $e\in \{0,1\}$.  If $e=0$, then since $a\geq 2$ we find by symmetry that we must also have $b\geq 2$.  But then $\cM$ has $h^2$, so there is no suitable $\cM$.  If $e=1$, then the only possibility is $a=b=2$.

($\Leftarrow$) Conversely, for any integer $r \geq 1$ the cokernel $\cV$ of a general injection $$0\to \OO_{\F_1}(-2E-2F)\to \OO_{\F_1}^{r+1}\to \cV\to 0$$ is a globally generated torsion-free sheafe with no higher cohomology, and ${\bf v}$ satisfies the necessary hypotheses.
\end{proof}

Finally, we construct bundles $\cM$ with the necessary properties in the remaining cases.  We write ${\bf m}^D$ for the Serre dual character ${\bf m}^* (K_X)$.  The next lemma will allow us to apply our theory of Gaeta-type resolutions to study bundles of character ${\bf m}$.

\begin{lemma}\label{lem-serreDualIneqs}
Suppose $\chi({\bf v}(-F)) < 0$, $\nu({\bf v})\cdot F < 0$, and $\chi({\bf v}) \geq r({\bf v})$.  If $e=0$, then further suppose $\chi({\bf v}(-E)) < 0$ and $\nu({\bf v})\cdot E <0$.  The character ${\bf m}^D$ satisfies
\begin{align*}
\chi({\bf m}^D) &= 0\\
\chi({\bf m}^D(-F)) & < 0\\
\chi({\bf m}^D(-E)) & \leq 0
\end{align*}
and, if $e\geq 2$, then furthermore 
$$\chi({\bf m}^D(-E-F) )  < 0.$$

\end{lemma}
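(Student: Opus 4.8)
The plan is to translate all four assertions into Riemann--Roch statements for line bundles, exploiting that the Serre-dual operation interacts well with twisting. The key point is the identity ${\bf m}^D(-D) = {\bf m}^*(K_{\F_e}-D) = ({\bf m}(D))^D$, valid for any divisor $D$ on $\F_e$; since Euler characteristics are Serre-duality invariant, $\chi({\bf w}^D) = \chi({\bf w})$ for every class ${\bf w}$, and therefore $\chi({\bf m}^D(-D)) = \chi({\bf m}(D))$ for $D \in \{0, F, E, E+F\}$. Expanding ${\bf m}(D) = \chi({\bf v})\ch\OO_{\F_e}(D) - {\bf v}(D)$ and using additivity of $\chi$ gives $$\chi({\bf m}^D(-D)) = \chi({\bf v})\cdot\chi(\OO_{\F_e}(D)) - \chi({\bf v}(D)),$$ where the line-bundle Euler characteristics $\chi(\OO_{\F_e}(D))$ are computed from part (1) of Theorem~\ref{thm-lineBundle} and equal $1, 2, 2-e, 4-e$ for $D = 0, F, E, E+F$ respectively.

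Next I would eliminate the ``positive'' twists $\chi({\bf v}(F)), \chi({\bf v}(E)), \chi({\bf v}(E+F))$ in favour of the ``negative'' ones occurring in the hypotheses. By Lemma~\ref{lem-eulerchar} the function $D \mapsto \chi({\bf v}(D))$ agrees with a fixed quadratic form up to the linear term $D\cdot c_1({\bf v})$, so adding the values at $D$ and $-D$ cancels that term. Feeding the resulting relations back into the formula above collapses the four quantities to
\begin{align*}
\chi({\bf m}^D) &= 0,\\
\chi({\bf m}^D(-F)) &= \chi({\bf v}(-F)),\\
\chi({\bf m}^D(-E)) &= \chi({\bf v}(-E)) - e\left(\chi({\bf v})-r({\bf v})\right),\\
\chi({\bf m}^D(-E-F)) &= \chi({\bf v}(-E-F)) + (2-e)\left(\chi({\bf v})-r({\bf v})\right).
\end{align*}

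With these identities in place the conclusions are read off from the hypotheses. The first equality is automatic, and the second is precisely the assumption $\chi({\bf v}(-F)) < 0$. For $\chi({\bf m}^D(-E))$: when $e \geq 1$, nefness of $\nu({\bf v})$ gives $E\cdot c_1({\bf v}) \geq 0$ and hence $\chi({\bf v}(-E)) \leq \chi({\bf v}) - r({\bf v})$, so $\chi({\bf m}^D(-E)) \leq (1-e)(\chi({\bf v})-r({\bf v})) \leq 0$ because $\chi({\bf v}) \geq r({\bf v})$; when $e = 0$ the correction term vanishes and one uses the extra hypothesis $\chi({\bf v}(-E)) < 0$ directly. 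For $\chi({\bf m}^D(-E-F))$ with $e \geq 2$: nefness again gives $\chi({\bf v}(-E-F)) = \chi({\bf v}(-F)) - E\cdot c_1({\bf v}) \leq \chi({\bf v}(-F)) < 0$, while $(2-e)(\chi({\bf v})-r({\bf v})) \leq 0$, so the sum is negative. I expect the main obstacle to be purely organizational: keeping careful track of which of these inequalities are strict in each range of $e$, and noticing that the favourable sign of the correction term $(2-e)(\chi({\bf v})-r({\bf v}))$ is exactly what forces the final inequality to be stated only for $e \geq 2$ --- consistent with the stronger hypotheses on $\Delta({\bf v})$ needed when $e \in \{0,1\}$ in Lemma~\ref{lem-Lexists}.
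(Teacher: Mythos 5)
Your proposal is correct, and every identity you write down checks out against Riemann--Roch (I verified $\chi({\bf m}^D(-E)) = (1-e)(\chi({\bf v})-r({\bf v})) - c_1({\bf v})\cdot E$ and $\chi({\bf m}^D(-E-F)) = (3-e)(\chi({\bf v})-r({\bf v})) - c_1({\bf v})\cdot(E+F)$, which agree with your formulas). The overall skeleton is the same as the paper's: reduce via Serre duality to computing $\chi({\bf m}(D))$ for $D\in\{0,F,E,E+F\}$, expand ${\bf m}(D) = \chi({\bf v})\ch\OO_{\F_e}(D)-{\bf v}(D)$, and compare with the hypotheses using Lemma \ref{lem-eulerchar} and the standing nefness assumption on $\nu({\bf v})$ (note the inequalities $\nu({\bf v})\cdot F<0$, $\nu({\bf v})\cdot E<0$ in the lemma's statement are evidently sign typos, given that nefness is assumed throughout the section; both you and the paper silently use $c_1({\bf v})\cdot E, c_1({\bf v})\cdot F\geq 0$). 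The one place you genuinely diverge --- and improve on the paper --- is the final inequality in the case $e=2$. There the paper's computation only reduces the claim to $\chi({\bf v}(-E-F))<0$, and it then establishes this by a cohomological argument: take a general $\cV\in\cP_F({\bf v})$, use Theorem \ref{thm-betti} to see $\cV(-F)$ has only $h^1$, restrict to $E$ where $\cV(-F)|_E$ is balanced of degree $\geq -1$ per summand, and conclude from the restriction sequence that $\cV(-E-F)$ has only nonzero $h^1$. Your observation that $\chi({\bf v}(-E-F)) = \chi({\bf v}(-F)) - E\cdot c_1({\bf v}) \leq \chi({\bf v}(-F)) < 0$ is a purely numerical two-line replacement for this, valid uniformly for all $e\geq 2$, and it avoids any appeal to genericity or to the Brill--Noether results of \S\ref{sec-BN}. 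That is a real simplification of the paper's argument.
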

\begin{proof}
By Serre duality it is equivalent to show
\begin{align*}
\chi({\bf m}) &= 0\\
\chi({\bf m}(F)) & < 0\\
\chi({\bf m}(E)) & \leq 0\\
\chi({\bf m}(E+F) ) & < 0 \mbox{ (if $e\geq 2$)} 
\end{align*}
The first statement is clear.  For the second, we use Lemma \ref{lem-eulerchar} to compute 
\begin{align*}
\chi({\bf m}(F)) &= \chi({\bf v})\chi(\OO_{\F_e}(F))-\chi({\bf v}(F))\\
&= 2\chi({\bf v}) - (\chi({\bf v})+c_1({\bf v})\cdot F + r({\bf v})(2-1))\\
&= \chi({\bf v})-r({\bf v}) - c_1({\bf v})\cdot F\\
&= \chi({\bf v}(-F)) \\&< 0.
\end{align*}
If $e=0$, then the third inequality follows by symmetry.  If instead $e \geq 1$, then 
\begin{align*}
\chi({\bf m}(E)) &= \chi({\bf v})\chi(\OO_{\F_e}(E))-\chi({\bf v}(E))\\
&= \chi({\bf v})(2-e)-(\chi({\bf v})+c_1({\bf v})\cdot E+r({\bf v})(2-e-1))\\ &= (\chi({\bf v})-r({\bf v}))(1-e)-c_1({\bf v})\cdot E\\ &\leq 0
\end{align*}
since $\chi({\bf v}) \geq r({\bf v})$ and $c_1({\bf v})$ is nef.

Finally, we similarly compute 
\begin{align*}\chi({\bf m}(E+F)) &= \chi({\bf v})\chi(\OO_{\F_e}(E+F)) - \chi({\bf v}(E+F))\\
&=  \chi({\bf v})(4-e) - (\chi({\bf v}) + c_1({\bf v})\cdot (E+F) +r({\bf v})(4-e-1))\\
&= (\chi({\bf v})-r({\bf v}))(3-e)-c_1({\bf v})\cdot (E+F).
\end{align*}
Since $\chi({\bf v}) \geq r({\bf v})$ and $c_1({\bf v})$ is nef with $c_1({\bf v})\cdot F > 0$, it follows that $\chi({\bf m}(E+F)) < 0$ if $e\geq 3$.  Suppose $e=2$.  Then the above expression reduces to $$\chi({\bf m}(E+F)) = \chi({\bf v})-r({\bf v}) - c_1({\bf v})\cdot (E+F) = \chi({\bf v}(-E-F)).$$ Let $\cV\in \cP_F({\bf v})$ be general.  Then  $\cV(-F)$ only has $h^1$, and $\cV(-F)|_E$ splits as a direct sum of line bundles of degree $\geq -1$.  Then the restriction sequence $$0\to \cV(-E-F)\to \cV(-F)\to \cV(-F)|_E\to 0$$ shows that the only nonzero cohomology of $\cV(-E-F)$ is also $h^1(\F_e,\cV(-E-F))\neq 0$, and therefore $\chi({\bf v}(-E-F)) <0$ as required. 
\end{proof}

Together with the other results in this section, the next result completes the classification and proves Theorems \ref{thm-gg1} and \ref{thm-gg0}.

\begin{proposition}
Suppose $\chi({\bf v}(-F)) < 0$, $\nu({\bf v})\cdot F > 0$, and $\chi({\bf v}) \geq r({\bf v})+2$.  If $e=0$, further assume $\chi({\bf v}(-E)) < 0$ and $\nu({\bf v})\cdot E > 0$.  Then ${\bf v}$ is globally generated.
\end{proposition}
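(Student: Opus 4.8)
The plan is to verify the hypotheses of Lemma~\ref{lem-mukai} by constructing a vector bundle $\cM$ of character ${\bf m}$ which has no cohomology, satisfies $h^1(\F_e,\cM(-F))=0$, and has globally generated dual $\cM^*$. Equivalently, working with the Serre dual character, I want a vector bundle $\cN$ of character ${\bf m}^D$ with no cohomology, with $h^1(\F_e,\cN(-E-eF))=0$ or the appropriate dual condition on $F$, and such that $\cN$ itself is globally generated. The natural candidate for $\cN$ is a general sheaf admitting a Gaeta-type resolution. First I would invoke Lemma~\ref{lem-serreDualIneqs}, which records exactly the numerical inequalities $\chi({\bf m}^D)=0$, $\chi({\bf m}^D(-F))<0$, $\chi({\bf m}^D(-E))\le 0$, and (for $e\ge 2$) $\chi({\bf m}^D(-E-F))<0$. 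Comparing with the system~(\ref{lb-ineqs}), I would check that taking $L=\OO_{\F_e}$ (possibly after a small adjustment when $e\le 1$) makes the four inequalities~(\ref{lb-ineqs}) hold for the character ${\bf m}^D$; the inequality $\chi({\bf m}^D)\ge 0$ holds since $\chi({\bf m}^D)=0$, and the other three are precisely the content of Lemma~\ref{lem-serreDualIneqs} for $e\ge 2$, with the $e\le 1$ cases handled by the symmetry remarks and an ad hoc argument as elsewhere in the section.

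With~(\ref{lb-ineqs}) in hand, Proposition~\ref{prop-constructRes} produces a general $F$-prioritary sheaf $\cN\in\cP_F({\bf m}^D)$ — which is a vector bundle by Walter's theorem since $r({\bf m}^D)=r({\bf v})\ge 2$ — admitting an $\OO_{\F_e}$-Gaeta-type resolution
$$0\to \OO_{\F_e}(-E-(e+1)F)^\alpha \to \OO_{\F_e}(-E-eF)^\beta\oplus \OO_{\F_e}(-F)^\gamma\oplus \OO_{\F_e}^\delta \to \cN\to 0.$$
From this resolution I would read off the three required properties of $\cN$. The vanishing of all cohomology of $\cN$: since $\chi(\cN)=\chi({\bf m}^D)=0$ it suffices to show $\cN$ is nonspecial and has, say, no $h^1$; Theorem~\ref{thm-betti} applies once I check $\nu({\bf m}^D)\cdot F\ge -1$ and $\nu({\bf m}^D)\cdot E\ge -1$, which should follow from the nefness of $c_1({\bf v})$ together with $\chi({\bf v})\ge r({\bf v})+2$ bounding the slope of ${\bf m}$. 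Alternatively — and this is cleaner — the long exact sequence in cohomology attached to the Gaeta resolution, combined with the vanishing of all cohomology of the line bundles $\OO_{\F_e}(-F)$ and $\OO_{\F_e}(-E-(e+1)F)$ (Theorem~\ref{thm-lineBundle} and the Remark following it), reduces the cohomology of $\cN$ to that of $\OO_{\F_e}^\delta\oplus\OO_{\F_e}(-E-eF)^\beta$; controlling $h^2$ and $h^0$ of $\OO_{\F_e}(-E-eF)$ via the inequalities on ${\bf m}^D$ forces all cohomology of $\cN$ to vanish. The condition $h^1(\F_e,\cN(-E-eF))=0$ (the Serre-dual form of $h^1(\F_e,\cM(-F))=0$) follows by twisting the resolution by $\OO_{\F_e}(-E-eF)$ and again using Theorem~\ref{thm-lineBundle}, since $\chi({\bf m}^D(-E-eF))$ is pinned down by the inequalities and the relevant line bundles have at most $h^1$. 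Finally, global generation of $\cN$: the middle term of the resolution, $\OO_{\F_e}(-E-eF)^\beta\oplus\OO_{\F_e}(-F)^\gamma\oplus\OO_{\F_e}^\delta$, is not itself globally generated, so I would instead tensor the resolution by a sufficiently positive line bundle or, better, argue that $\cN$ (equivalently $\cM^*$) is globally generated directly from the structure of the resolution — the cokernel of a general map out of $\OO_{\F_e}(-E-(e+1)F)^\alpha$ into a sum of the other three line bundles is globally generated away from a locus one controls, and Lemma~\ref{lem-quotientofgg} together with positivity of $c_1$ finishes it; this is the point where I expect to mimic the $\P^2$ argument of Bertram–Goller–Johnson.

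The main obstacle I anticipate is precisely the global generation of $\cM^*$ (equivalently of $\cN$): unlike the no-cohomology and $h^1(\cM(-F))=0$ conditions, which are immediate numerical consequences of the Gaeta-type resolution, global generation is not an open condition and cannot be read off mechanically from the resolution, since the line bundle $\OO_{\F_e}(-E-eF)$ appearing in the middle term is not globally generated. I would handle this by a more careful analysis: present $\cM^*$ as an extension or iterated cokernel built from globally generated pieces, or restrict to the exceptional section $E$ and to fibers $F$ and check global generation curve-by-curve using Corollary~\ref{cor-balanced} (so that $\cM^*|_E$ is balanced of nonnegative degree) and Walter's lemmas on restrictions to fibers, then lift to $\F_e$ using the vanishing $h^1(\F_e,\cM^*(-E))=0$ or $h^1(\F_e,\cM^*(-F))=0$ exactly as in the proof of Proposition~\ref{prop-ggPositive}. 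Once $\cM$ with the three properties is produced, Lemma~\ref{lem-mukai} immediately gives that ${\bf v}$ is globally generated, completing the proof and, together with the earlier propositions in this section, Theorems~\ref{thm-gg1} and~\ref{thm-gg0}.
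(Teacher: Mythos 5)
Your overall strategy is the paper's: build the Lazarsfeld--Mukai kernel $\cM$ via Lemma \ref{lem-mukai}, pass to the Serre dual character ${\bf m}^D$, use Lemma \ref{lem-serreDualIneqs} to verify the inequalities (\ref{lb-ineqs}) with $L=\OO_{\F_e}$, and extract the three required properties from a Gaeta-type resolution of $\cM^D$. But there are concrete gaps. First, your ``equivalently'' reformulation is false: the bundle $\cN$ of character ${\bf m}^D$ satisfies $\chi(\cN)=0$ and has no cohomology, so it has no sections and cannot possibly be globally generated. The object that must be globally generated is $\cM^*=\cN(-K_{\F_e})$, a nontrivial twist of $\cN$ (and $-K_{\F_e}$ is not even nef for $e\geq 3$, so you cannot pass between the two). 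Relatedly, the Serre-dual form of $h^1(\F_e,\cM(-F))=0$ is $h^1(\F_e,\cN(F))=0$, not $h^1(\F_e,\cN(-E-eF))=0$; the correct twist does follow from the resolution, but the one you wrote down is not the condition you need.

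Second, the global generation of $\cM^*$ --- which you correctly identify as the crux --- is not established by either route you sketch. The fiber-restriction argument \`a la Proposition \ref{prop-ggPositive} needs $h^1(\F_e,\cM^*(-F))=0$, and this fails in general: twisting the dual resolution by $-F$ brings in $\OO_{\F_e}(E+F)$, which has $h^1=e-2>0$ for $e\geq 3$. Restriction to $E$ only controls points of $E$. What actually works is the combination: dualize the presentation of $\cM$ to get $0\to \OO_{\F_e}(E+F)^\alpha\to \OO_{\F_e}(E+2F)^\beta\oplus\OO_{\F_e}(2E+(e+1)F)^\gamma\to\cM^*\to 0$ and observe the middle line bundles are globally generated away from $E$; then on $E$ use $h^1(\F_e,\cM^*(-E))=0$ together with balancedness of $\cM^*|_E$. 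For that balancedness you cannot just cite Corollary \ref{cor-balanced}, since it presupposes $\Delta\geq 0$ for the character in question and $\Delta({\bf m}^D)\geq 0$ is not known; instead one must check directly from the resolution that $\cM^D$ is $E$-prioritary (two explicit $\Ext^2$ vanishings via Serre duality) and apply Proposition \ref{prop-balanced}. Finally, the case $\chi({\bf m}^D(-E-F))\geq 0$ (which forces $e\in\{0,1\}$) is not a ``small adjustment'': there the would-be exponent $\alpha$ is negative and no Gaeta-type resolution exists, so one must instead exhibit $\cM^D$ explicitly as the direct sum $\OO_{\F_e}(-E-(e+1)F)^{-\alpha}\oplus\OO_{\F_e}(-E-eF)^{\beta}\oplus\OO_{\F_e}(-F)^{\gamma}$ and verify the three properties of its Serre dual by hand, using $e\leq 1$ for the global generation of $\cM^*$.
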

\begin{proof}
First assume $\chi({\bf m}^D(-E-F)) < 0$.  Then by Lemma \ref{lem-serreDualIneqs}, $\OO_{\F_e}$ is a line bundle satisfying the inequalities (\ref{lb-ineqs}) for the character ${\bf m}^D$.  Therefore by Proposition \ref{prop-constructRes}, the stack $\cP_{F}({\bf m}^D)$ is nonempty and a general $\cM^D\in \cP_F({\bf m}^D)$ admits a resolution of the form $$0\to \OO_{\F_e}(-E-(e+1)F)^{\alpha} \to \OO_{\F_e} ( - E- eF)^\beta \oplus \OO_{\F_e}(-F)^\gamma  \to \cM^D\to 0;$$ there are no copies of $\OO_{\F_e}$ in the resolution since $\chi({\bf m}^D) = 0$.  Since $\chi({\bf v})\geq r({\bf v})+2$, we have $r(\cM^D) \geq 2$ and therefore, by Theorem \ref{thm-Walter},  $\cM^D$ is a vector bundle.  Clearly $\cM^D$ has no cohomology, so its Serre dual $\cM$ also has no cohomology.  The bundle $\cM$ fits in a sequence $$0\to \cM \to \OO_{\F_e}(-E-2F)^\beta \oplus \OO_{\F_e}(-2E-(e+1)F)^\gamma \to \OO_{\F_e}(-E-F)^\alpha \to 0,$$ from which it immediately follows that $h^1(\F_e,\cM(-F)) = 0$.  The dual $\cM^*$ has a resolution $$0\to \OO_{\F_e}(E+F)^\alpha \to \OO_{\F_e}(E+2F)^\beta \oplus \OO_{\F_e}(2E + (e+1)F)^\gamma \to \cM^*\to 0.$$ The line bundles $\OO_{\F_e}(E+2F)$ and $\OO_{\F_e}(2E+(e+1)F)$ are each globally generated at all points $p\in \F_e$ with $p\notin E$, and therefore $\cM^*$ is globally generated away from $E$.

To see that $\cM^*$ is globally generated at all points on $E$, observe that $c_1(\cM^*) = c_1({\bf v})$ is nef, so in particular $c_1(\cM^*)\cdot E \geq 0$.  The resolution of $\cM^*$ shows that $h^1(\F_e,\cM^*(-E))=0$.  If $\cM^*|_E$  splits as a balanced direct sum of line bundles, then $\cM^*|_E$ is globally generated and the restriction sequence $$0\to \cM^*(-E) \to \cM^* \to \cM^*|_E\to 0$$ shows that $\cM^*$ is globally generated on $E$.  By Proposition \ref{prop-balanced}, to see $\cM^*|_E$ is balanced (for a general $\cM^*$) it is enough to show $\cM^D$ (and hence $\cM^*$) is $E$-prioritary.  To see $\Ext^2(\cM^D,\cM^D(-E))=0,$ it is enough to verify $\Ext^2(\cM^D,\OO_{\F_e}(-2E-eF)) = \Ext^2(\cM^D,\OO_{\F_e}(-E-F))=0$.  Equivalently by Serre duality, we need $$\Hom(\OO_{\F_e}(-2E-eF),\cM^D(K_{\F_E})) = H^0(\F_e,\cM^D(-2F))=0$$ and $$\Hom(\OO_{\F_e}(-E-F),\cM^D(K_{\F_e}))=H^0(\F_e,\cM^D(-E-(e+1)F))=0.$$ Both vanishings follow immediately from the resolution of $\cM^D$.  Therefore $\cM^*$ is globally generated, and Lemma \ref{lem-mukai} completes the proof in case $\chi({\bf m}^D(-E-F))<0$.

Finally suppose $\chi({\bf m}^D(-E-F)) \geq 0$.  By Lemma \ref{lem-serreDualIneqs}, we must have $e\in \{0,1\}$.  Define integers \begin{align*}
\alpha &= -\chi({\bf m}^D(-E-F))\\
\beta &= -\chi({\bf m}^D(-E))\\
\gamma &= -\chi({\bf m}^D(-F)).
\end{align*}
By Lemma, \ref{lem-serreDualIneqs}, we have $\alpha \leq 0$, $\beta \geq 0$, and $\gamma > 0$.  Then the direct sum of line bundles $$\cM^D := \OO_{\F_e}(-E-(e+1)F)^{-\alpha} \oplus \OO_{\F_e}(-E-eF)^\beta \oplus \OO_{\F_e}(-F)^\gamma$$ has $\ch(\cM^D) = {\bf m}^D$ by an argument analogous to the first part of the proof of Proposition \ref{prop-constructRes}.  Its Serre dual is the bundle $$\cM = \OO_{\F_e}(-E-F)^{-\alpha} \oplus \OO_{\F_e}(-E-2F)^{\beta} \oplus \OO_{\F_e}(-2E-(e+1)F)^\gamma.$$ Then $\cM$ has no cohomology, $h^1(\F_e,\cM(-F))=0$, and $\cM^*$ is globally generated (since $e\leq 1$).  By Lemma \ref{lem-mukai}, ${\bf v}$ is globally generated.
\end{proof}

\subsection{Notes on ampleness}  We close the paper with some remarks on the question that initially led us to study globally generated vector bundles.  Let $X$ be a smooth surface.  Recall that a vector bundle $\cV$ on $X$ is \emph{ample} if the line bundle $\OO_{\PP \cV}(1)$ is ample.  
 
\begin{problem}
Classify the Chern characters of ample vector bundles on $X$.
\end{problem}

On $\PP^2$ and $\FF_e$, ample line bundles are globally generated. In contrast, examples of Gieseker show that higher rank ample vector bundles need not have any sections. For example, a bundle $\cV$ defined by a general resolution of the form $$0 \rightarrow \OO_{\PP^2}(-d)^2 \rightarrow \OO_{\PP^2}(-1)^4 \rightarrow \cV \rightarrow 0$$ is ample provided $d \gg 0$ (see \cite[Example 6.3.17]{Lazarsfeld} or \cite{Gieseker}). However, if a vector bundle $\cV$ is ample, then $\Sym^k \cV$ has no higher cohomology and is globally generated for sufficiently large $k$. By Riemann-Roch this implies the necessary inequality (see also \cite{FultonLazarsfeld})
\begin{equation}\label{ampleinequality}\tag{$\ast$}
\frac{\nu(\cV)^2}{2} > \frac{\Delta(\cV)}{r(\cV)+1}.
\end{equation}
Furthermore, $\nu(\cV)$ needs to be ample and its restriction to any curve needs to be ample. On $\FF_e$, this implies that  $\nu({\cV}) \cdot E \geq 1$ and  $\nu(\cV)\cdot F \geq 1.$ If $\nu(\cV) \cdot F=1$, the restriction of the bundle to every fiber must be $\OO_{\PP^1}(1)^r$, and hence $\cV(-E)$ is pulled back from $\P^1$.  One can ask whether the inequality (\ref{ampleinequality}) suffices to show a general $\cV$ is ample if $\nu(\cV)$ is sufficiently ample. For the Gieseker example above, the inequality (\ref{ampleinequality}) implies that if $\cV$ is ample, then $d \geq 7$. The authors do not know the optimal value of $d$ for $\cV$ to be ample even in this case.

By the next simple observation, our characterization of globally generated characters on $\F_e$ yields sufficient conditions for a character to be the character of an ample bundle.  However, analogues of Gieseker's example (see \cite[Theorem 6.3.65]{Lazarsfeld}) show that these conditions are certainly not necessary.

\begin{lemma}\label{lem-ample}
Let $X$ be a projective variety with an ample divisor $H$.  Suppose ${\cV}$ is a vector bundle on $X$ such that $\cV(-H)$ is globally generated.  Then $\cV$ is ample.
\end{lemma}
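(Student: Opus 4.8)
The plan is to reduce the ampleness of $\cV$ to two standard facts about ample vector bundles: a finite direct sum of ample line bundles is ample, and any quotient of an ample vector bundle is ample (see \cite{Lazarsfeld}).

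First I would use global generation to exhibit $\cV$ as a twisted quotient of a trivial bundle. Since $X$ is projective, $W := H^0(X,\cV(-H))$ is a finite-dimensional $k$-vector space, and since $\cV(-H)$ is globally generated the evaluation map gives a surjection $W\otimes_k \OO_X \to \cV(-H)$. Setting $N = \dim_k W$ and tensoring by the line bundle $\OO_X(H)$ (which preserves surjectivity), I obtain a surjection $\OO_X(H)^{\oplus N} \to \cV$.

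Next I would invoke positivity. Because $H$ is ample, $\OO_X(H)$ is an ample line bundle, so the finite direct sum $\OO_X(H)^{\oplus N}$ is an ample vector bundle. A quotient of an ample vector bundle is ample, and hence $\cV$ is ample, as desired.

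There is essentially no obstacle here: once the surjection $\OO_X(H)^{\oplus N}\to \cV$ is in hand, the conclusion is immediate from the quoted general properties of ample bundles. The only point requiring any care is the finite-dimensionality of $H^0(X,\cV(-H))$, which holds since $X$ is projective, so that $\OO_X(H)^{\oplus N}$ is a genuine vector bundle of finite rank.
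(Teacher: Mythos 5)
Your proof is correct and follows exactly the same route as the paper: twist the evaluation surjection $\OO_X^{\oplus N}\to\cV(-H)$ by $\OO_X(H)$ to exhibit $\cV$ as a quotient of the ample bundle $\OO_X(H)^{\oplus N}$, then use that quotients of ample bundles are ample. Your write-up merely spells out the finite-dimensionality of $H^0(X,\cV(-H))$, which the paper leaves implicit.
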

\begin{proof}
As $\cV(-H)$ is a quotient of $\OO_X^n$, we find that $\cV$ is the quotient of an ample bundle.
\end{proof}

On $\F_e$, we take $H = E+ (e+1)F$ and deduce the following result.

\begin{corollary}
Suppose $e \geq 2$, and let ${\bf v}\in K(\F_e)$ be a Chern character such that $r({\bf v}) \geq 2$, $\Delta({\bf v})\geq 0$, $\nu({\bf v})\cdot F \geq 1$, and $\nu({\bf v})\cdot E \geq 1$. Then the general $\cV\in \cP_F({\bf v})$ is ample whenever any of the following conditions holds.
\begin{enumerate}
\item We have $\nu({\bf v})\cdot F = 1$, and there are integers $a,m\geq 0$ such that $${\bf v} = (r({\bf v})-m)\ch \OO_{\F_e}(E+(e+a+1)F) + m \ch \OO_{\F_e}(E+(e+a+2)F).$$

\item We have $\nu({\bf v})\cdot F > 1$ and $\chi({\bf v}(-E-(e+2)F)) \geq 0$.

\item We have $\nu({\bf v})\cdot F > 1$, $\chi({\bf v}(-E-(e+2)F)) < 0$, and $\chi({\bf v}(-E-(e+1)F)) \geq r({\bf v})+2$.
\end{enumerate}
\end{corollary}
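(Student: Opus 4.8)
The plan is to reduce the corollary to Lemma~\ref{lem-ample} and the classification Theorem~\ref{thm-gg1} by twisting with the ample divisor $H = E+(e+1)F$. First observe that $H$ is ample: one computes $H\cdot F = 1$, $H\cdot E = 1$, $H^2 = e+2$, all positive, so $H$ is ample by Nakai--Moishezon (equivalently, $H = (E+eF)+F$ lies in the interior of the nef cone). By Lemma~\ref{lem-ample}, a vector bundle $\cV$ on $\F_e$ is ample whenever $\cV(-H)$ is globally generated, so it suffices to show that a general $\cV\in\cP_F({\bf v})$ has $\cV(-H)$ globally generated.

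Set ${\bf w} = {\bf v}\cdot\ch\OO_{\F_e}(-H)$. Twisting by $\OO_{\F_e}(-H)$ is an isomorphism of stacks $\cP_F({\bf v})\fto{\sim}\cP_F({\bf w})$, since the $F$-prioritary condition is preserved by twists by line bundles; twisting also preserves rank and discriminant, so $r({\bf w}) = r({\bf v})\geq 2$ and $\Delta({\bf w}) = \Delta({\bf v})\geq 0$. Moreover $\nu({\bf w})\cdot F = \nu({\bf v})\cdot F - 1\geq 0$ and $\nu({\bf w})\cdot E = \nu({\bf v})\cdot E - 1\geq 0$, so $\nu({\bf w})$ is nef and Theorem~\ref{thm-gg1} applies to ${\bf w}$. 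The crux is then purely numerical bookkeeping: conditions (1)--(3) of the corollary translate, under twisting by $H$, precisely into cases (1)--(3) of Theorem~\ref{thm-gg1} for ${\bf w}$. Indeed $\nu({\bf w})\cdot F = 0 \iff \nu({\bf v})\cdot F = 1$; the decomposition ${\bf w} = (r-m)\ch\OO_{\F_e}(aF)+m\ch\OO_{\F_e}((a+1)F)$ is exactly the twist by $\OO_{\F_e}(-H)$ of the decomposition in case~(1) of the corollary (same $a$ and $m$); and $\chi({\bf w}(-F)) = \chi({\bf v}(-E-(e+2)F))$ while $\chi({\bf w}) = \chi({\bf v}(-E-(e+1)F))$ and $r({\bf w}) = r({\bf v})$. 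Case~(4) of Theorem~\ref{thm-gg1} requires $e = 1$ and so does not arise here.

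Hence in each of the three cases Theorem~\ref{thm-gg1} shows that ${\bf w}$ is globally generated. As recorded at the start of \S\ref{sec-gg}, the substack of $\cP_F({\bf w})$ of globally generated vector bundles with no higher cohomology is open, hence dense since it is nonempty and $\cP_F({\bf w})$ is irreducible; transporting it through the isomorphism $\cP_F({\bf v})\fto{\sim}\cP_F({\bf w})$ yields a dense open substack of $\cP_F({\bf v})$ all of whose members $\cV$ have $\cV(-H)$ globally generated, and Lemma~\ref{lem-ample} then makes each such $\cV$ ample. Thus a general $\cV\in\cP_F({\bf v})$ is ample. There is no genuine obstacle beyond transcribing the hypotheses through the twist by $H$; the only point deserving a little care is the genericity assertion, i.e.\ ensuring that global generation of ${\bf w}$ produces an open dense family of ample bundles of character ${\bf v}$ and not merely one example, which is handled by moving the open ``globally generated with vanishing higher cohomology'' locus across the twisting isomorphism (alternatively, one may invoke openness of ampleness in families over the irreducible stack $\cP_F({\bf v})$).
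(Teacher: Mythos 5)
Your proposal is correct and is exactly the argument the paper intends: twist by the ample divisor $H=E+(e+1)F$ (the paper explicitly says "we take $H=E+(e+1)F$ and deduce the following result"), check that the hypotheses translate into cases (1)--(3) of Theorem \ref{thm-gg1} for ${\bf w}={\bf v}(-H)$ (with case (4) excluded since $e\geq 2$), and conclude via Lemma \ref{lem-ample}. The numerical translations $\chi({\bf w}(-F))=\chi({\bf v}(-E-(e+2)F))$ and $\chi({\bf w})=\chi({\bf v}(-E-(e+1)F))$ and your handling of genericity across the twisting isomorphism are all correct.
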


We leave the analogous statements for $\P^1\times \P^1$, $\F_1$, and $\P^2$ to the reader.  Completing the classification of characters of ample vector bundles remains a very interesting open question for any of these surfaces.

\bibliographystyle{plain}

\end{document}